\documentclass[11pt,reqno]{amsart}
\usepackage[margin=1in]{geometry}
\usepackage{amsmath,amssymb,amsthm,graphicx,amsxtra, setspace}
\usepackage[utf8]{inputenc}
\usepackage{mathrsfs}
\usepackage{hyperref}
\usepackage{upgreek}
\usepackage{mathtools}
\usepackage{stmaryrd}
\allowdisplaybreaks
\usepackage{xcolor}

\usepackage[pagewise]{lineno}

\newtheorem{theorem}{Theorem}[section]
\newtheorem{lemma}[theorem]{Lemma}
\newtheorem{proposition}[theorem]{Proposition}
\newtheorem{assumption}[theorem]{Assumption}

\newtheorem{definition}[theorem]{Definition}

\newtheorem{remark}[theorem]{Remark}
\newtheorem{hypothesis}[theorem]{Hypothesis}

\let\originalleft\left
\let\originalright\right
\renewcommand{\left}{\mathopen{}\mathclose\bgroup\originalleft}
\renewcommand{\right}{\aftergroup\egroup\originalright}

\DeclareMathOperator*{\esssup}{ess\,sup}

\DeclareMathAlphabet{\mathpzc}{OT1}{pzc}{m}{it}

\hypersetup{colorlinks=true,%
	citecolor=red,%
	filecolor=blue,%
	linkcolor=blue,%
}
\usepackage{graphicx}


\renewcommand{\d}{\/\mathrm{d}\/}

\def\w{\textbf{W}^{\varepsilon}_{{\theta}^{\varepsilon}}}

\def\e{\varepsilon}

\def\L{\mathbb{L}}

\def\I{\mathrm{I}}
\def\F{\mathrm{F}}
\def\C{\mathrm{C}}
\def\f{\boldsymbol{f}}

\def\B{\mathrm{B}}
\def\D{\mathrm{D}}
\def\y{\boldsymbol{y}}

\def\E{\mathbb{E}}
\def\X{\mathbb{X}}
\def\x{\boldsymbol{x}}

\def\g{\boldsymbol{g}}

\def\h{\boldsymbol{h}}
\def\z{\boldsymbol{z}}
\def\v{\boldsymbol{v}}
\def\w{\boldsymbol{w}}
\def\be{\mathbf{e}}

\def\M{\mathrm{M}}
\def\N{\mathbb{N}}

\def\no{\nonumber}
\def\PA{\mathrm{P}_{1/m}}
\def\V{\mathbb{V}}
\def\wi{\widetilde}

\def\u{\mathrm{U}}
\def\P{\mathrm{P}}
\def\PP{\mathbb{P}}
\def\u{\boldsymbol{u}}
\def\H{\mathbb{H}}

\newcommand{\R}{\mathbb{R}}

\renewcommand{\d}{\/\mathrm{d}\/}


\newcommand{\Addresses}{{
		\footnote{
			\noindent \textsuperscript{1}Montanuniversit\"at Leoben, Department Mathematik und Informationstechnologie, Franz Josef Strasse 18, 8700
			Leoben, Austria.\par\nopagebreak
			\noindent  \textit{e-mail:} \texttt{ankit.kumar@unileoben.ac.at, ankitkumar.2608@gmail.com.}
			
			\noindent \textsuperscript{2}Department of Mathematics, Indian Institute of Technology Roorkee-IIT Roorkee,
			Haridwar Highway, Roorkee, Uttarakhand 247667, INDIA.\par\nopagebreak
			\noindent  \textit{e-mail:} \texttt{maniltmohan@ma.iitr.ac.in, maniltmohan@gmail.com.}
			
			\noindent \textsuperscript{*}Corresponding author.

			\textit{Key words:} convective Brinkman-Forchheimer equations, strong solution, large deviation principle, weak convergence. 
			
			Mathematics Subject Classification (2020): Primary 60H15; Secondary 35R60, 35Q30, 37L55.

}}}

\begin{document}
	
	
	\title[Stochastic damped Ladyzhenskaya-Smagorinsky equations]{Global well-posedness and small time asymptotics of stochastic Ladyzhenskaya-Smagorinsky equations with damping  on unbounded domains
		\Addresses}
	\author[A. Kumar]{Ankit Kumar\textsuperscript{1}}
	\author[M. T. Mohan ]{Manil T. Mohan\textsuperscript{2*}}

	\maketitle
	
	\begin{abstract}
		The Ladyzhenskaya-Smagorinsky equations model turbulence phenomena,  and  are given by 
		$$	\frac{\partial \boldsymbol{u}}{\partial t}-\mu \mathrm{div}\left(\left(1+|\nabla\boldsymbol{u}|^2\right)^{\frac{p-2}{2}}\nabla\boldsymbol{u}\right)+(\boldsymbol{u}\cdot\nabla)\boldsymbol{u}+\nabla p=\boldsymbol{f}, \ \nabla\cdot\boldsymbol{u}=0,$$
		for $p\geq 2.$   In this work, we consider  the stochastic Ladyzhenskaya-Smagorinsky equations with the damping $\alpha\u+\beta|\boldsymbol{u}|^{r-2}\boldsymbol{u},$  for $r\geq 2$ ($\alpha,\beta\geq 0$), subjected to multiplicative Gaussian noise  in a Poincar\'e domain (which may be bounded or unbounded) $\mathcal{O}\subset\R^d$ ($2\leq d\leq 4$). We show the local monotonicity ($p\geq \frac{d}{2}+1,\ r\geq 2$) as well as global monotonicity ($p\geq 2,\ r\geq 4$) properties of the linear and nonlinear operators, which along with an application of a stochastic version of the Minty-Browder technique imply the existence of a unique pathwise strong solution satisfying the energy equality (It\^o formula), which is proved with the help of the methodology developed in  [Krylov, \emph{Probab. Theory Related Fields}, {\bf 147} (2010), 583--605.] Then, we discuss the small time asymptotics by studying the effect of small, highly nonlinear, unbounded drifts (small time large deviation principle) for the stochastic Ladyzhenskaya-Smagorinsky equations with damping.
	\end{abstract}

	\section{Introduction}\label{sec1}\setcounter{equation}{0}
	In this work, we study the well-posedness of the stochastic Ladyzhenskaya-Smagorinsky equations with damping in Poincaré domains $\mathcal{O}\subset \mathbb{R}^d$, where $2 \leq d \leq 4$. We assume that the domain $\mathcal{O}$ satisfies the following condition:
	\begin{assumption}\label{assumpO}
		Let $\mathcal{O}$ be an open, connected and may be unbounded subset of $\R^d$, the boundary of which is uniformly of class $\mathrm{C}^2$. For the domain $\mathcal{O}$ and any fixed $p\in[1,\infty)$, we also assume that, there exists a positive constant $\lambda_{1,p}$ such that the following Poincar\'e inequality  is satisfied:
		\begin{align}\label{2.1}
			\lambda_{1,p}\int_{\mathcal{O}} |\psi(x)|^p \d x \leq \int_{\mathcal{O}} |\nabla \psi(x)|^p \d x,  \ \text{ for all } \  \psi \in \mathbb{W}^{1,p}_0 (\mathcal{O}).
		\end{align}
	\end{assumption}
	A domain in which Poincar\'e inequality is satisfied, we call it as a \emph{Poincar\'e domain} and if  $\mathcal{O}$ is bounded in some direction, then the Poincar\'e inequality \eqref{2.1} holds. For example, one can consider  $\mathcal{O}=\R^{d-1}\times(-L,L),$ $L>0$ (see \cite[p.306]{Te} and
	\cite[p.117]{Robinson3} for more details).

\subsection{The model}
	In the works \cite{OAL1,OAL2,OAL}, etc., Olga Ladyzhenskaya proposed a new set of equations, which describe turbulence phenomena. 	Let $\mathcal{O}\subset\R^d$ ($2\leq d\leq 4$) be a  Poincar\'e domain. Let $\u(t , x) \in \R^d$ be the velocity field at time $t$ and position $x$, $p(t,x)\in\R$ be the pressure, $\f(t,x)\in\R^d$ be an external forcing.  The model described by Ladyzhenskaya is given by (see \cite{HBV})
	\begin{equation}\label{1.1}
		\left\{
		\begin{aligned}
			\frac{\partial\u}{\partial t}+(\u\cdot\nabla)\u-\nabla\cdot\mathbb{T}(\u,p)&=\f,\ \text{ in }\ \mathcal{O}\times(0,T),\\
			\nabla\cdot\u&=0,
		\end{aligned}
		\right.
	\end{equation}
	where $\mathbb{T}$ is the stress tensor
	\begin{align}
		\mathbb{T}&=-p\mathrm{I}+\nu_{\mathbb{T}}(\u)\mathcal{D}\u,\ \mathcal{D}\u=\frac{1}{2}(\nabla\u+(\nabla\u)^{\top}),\label{1.2}\\
		\nu_{\mathbb{T}}(\u)&=\nu_0+\nu_1|\mathcal{D}\u|^{p-2},\label{1.3}
	\end{align}
	and $\nu_0,\nu_1$ are strictly positive constants. The system \eqref{1.1} is supplemented by the following initial  condition:
	\begin{align}
		\u(0)=\u_0 \ \text{ in } \ \mathcal{O}.\label{1.5}
	\end{align}
	The system \eqref{1.1} satisfies the Stokes principle (cf. \cite[Appendix 1]{HBV}). Note that the equation \eqref{1.2} tells us that the stress tensor $\mathbb{T}$ depends on the symmetric part $\mathcal{D}\u$ of the gradient of the velocity  has a polynomial growth of $p$-order, for $p > 2$. Taking $p = 3$, we obtain  the classical Smagorinsky turbulence model introduced  in \cite{JSS} (see \cite{HBV,TJRH,CPa}, etc). J.-L. Lions in  \cite{JLL1} and \cite[Chapter 2, Section 5]{JLL2} considered the case in which $\mathcal{D}\u$ is replaced by $\nabla\u$, but  in this case the Stokes principle is not satisfied. The existence of a global weak solution to the system \eqref{1.1}-\eqref{1.5} is known due to  Ladyzhenskaya for $p\geq 1+\frac{2d}{d+2}$ and its uniqueness for $p\geq 1+\frac{d}{2}$.  For further results on the global solvability results  (the existence and uniqueness of weak as well as strong solutions) on the deterministic Ladyzhenskaya-Smagorinsky equations, the interested readers are referred to see \cite{HBV,OAL1,OAL2,OAL,JLL2,JMJN1,JMJN,JMJN2}, etc. 
	
	In this work, we consider a stochastic counterpart of the following form of Ladyzhenskaya-Smagorinsky equations (cf. \cite{JLL1,PNKT})  with damping: 
	\begin{equation}\label{1}
		\left\{
		\begin{aligned}
			\frac{\partial \u}{\partial t}-\mu \mathrm{div}\Big(\left(1+|\nabla\boldsymbol{u}|^2\right)^{\frac{p-2}{2}}\nabla\boldsymbol{u}\Big)+(\u\cdot\nabla)\u&+\alpha\u+\beta|\u|^{r-2}\u\\+\nabla p&=\boldsymbol{f}, \ \text{ in } \ \mathcal{O}\times(0,T), \\ \nabla\cdot\u&=0, \ \text{ in } \ \mathcal{O}\times(0,T), \\
			\u(0)&=\u_0, \ \text{ in } \ \mathcal{O},
		\end{aligned}
		\right.
	\end{equation}
	where $\alpha,\beta\geq 0$ are damping parameters. The model under our consideration can be described as 
	\begin{equation}\label{31}
		\left\{
		\begin{aligned}
			\d\u(t)+\Big[-\mu \mathrm{div}\Big(\big(1+|\nabla\boldsymbol{u}(t)|^2&\big)^{\frac{p-2}{2}}\nabla\boldsymbol{u}(t)\Big)+(\u(t)\cdot\nabla)\u(t)+\alpha\u(t)+\beta|\u(t)|^{r-2}\u(t)\\+\nabla p(t)\Big]\d t&=\f(t)\d t+\sum_{k=1}^{\infty}\boldsymbol{\sigma}_k(t,\u(t))\d\boldsymbol{W}_k(t), \ \text{ in } \ \mathcal{O}\times(0,T), \\ \nabla\cdot\u(t)&=0, \ \text{ in } \ \mathcal{O}\times(0,T), \\
			\u(0)&=\u_0, \ \text{ in } \ \mathcal{O},
		\end{aligned}
		\right.
	\end{equation} 
	where $\{\boldsymbol{W}_k(t)\}_{k\geq 1}$ is a sequence of independent $\mathscr{F}_t$-adapted Brownian motions defined on a filtered probability space $(\Omega,\mathscr{F},\{\mathscr{F}_t\}_{t\geq 0},\mathbb{P})$. As $\alpha$ does not play a major role in our analysis, we fix $\alpha=0$ in the rest of the paper. 
	 Our first goal in this work is to establish the existence of a unique pathwise strong solution.   Our second goal is to discuss the small time asymptotics by studying the effect of small, highly nonlinear, unbounded drifts (small time large deviation principle) for the system \eqref{31}. More specifically, our focus on the limiting behavior of the strong solution to the Ladyzhenskaya-Smagorinsky equations with damping in a time interval $[0, t]$ as $t$ tends to zero. An inspiration for considering such problems arises from Varadhan's identity 
	\begin{align*}
		\lim_{t\to 0}2t\log\mathbb{P}\left\{\u(0)\in\B,\ \u(t)\in\C\right\}=-d^2(\B,\C),
	\end{align*}
	where $\u$ is the strong solution to the Ladyzhenskaya-Smagorinsky equations with damping and $d $ is an appropriate Riemann distance associated with the diffusion generated by $\u$. 
	
	\subsection{Literature survey}
	The existence of martingale weak solution for an incompressible non-Newtonian fluid equations (stochastic power law fluids)  driven by a Brownian motion is proved in \cite{DBr}. Recently, the authors in \cite{AKMTM10} established the existence and uniqueness of weak solution for the generalized stochastic Navier-Stokes-Voigt equations. The existence and uniqueness of weak solutions for stochastic power law fluids is obtained in \cite{YTNY}.   The existence and uniqueness of strong solutions to coercive and locally monotone stochastic partial differential equations (SPDEs) like stochastic equations of non-Newtonian fluids driven by L\'evy noise is proved in \cite{ZBWL}. The local and global existence and uniqueness of solutions for general nonlinear evolution equations with coefficients satisfying some local monotonicity and generalized coercivity conditions  is established in \cite{WLMR}. The existence of random dynamical systems and random attractors for a large class of locally monotone SPDEs perturbed by additive L\'evy noise, which includes stochastic power law fluids, is obtained in \cite{BGWL}. The existence of global pathwise strong solutions for the two dimensional stochastic non-Newtonian incompressible fluid equations is showed in \cite{MHPN}. The existence and uniqueness of pathwise strong solutions to convective Brinkman-Forchheimer equations perturbed by Gaussian and pure jump noise is obtained in \cite{MTM8,MTM10}, respectively. The well-posedness for a class of fully local monotone SPDEs perturbed by Gaussian noise has been established in \cite{MRSSTZ} and the extension to L\'evy noise is obtained in \cite{AKMTM4}. The existence and uniqueness of strong solutions to stochastic 3D tamed Navier-Stokes equations governed by Gaussian noise and L\'evy noise is proved in \cite{MRTZ,ZDRZ}, respectively. In the articles \cite{GVAZ,RKAKM}, the authors established the well-posedness and LDP for stochastic evolution equation ($p$-Laplace type equation) driven by a multiplicative Gaussian noise, respectively. The global existence of both martingale and pathwise strong solutions of stochastic equations with a monotone operator, of the Ladyzhenskaya-Smagorinsky type, governed by a general L\'evy noise is established in \cite{PNKT}. 
	
	Large deviation theory gained its attention in the past few decades due to its wide range of applications in the areas like mathematical finance, risk management,  fluid mechanics, statistical mechanics, quantum physics, etc (cf. \cite{BD2,Chow1,DZ,FW,AKMTM5,MTM6,MTM12,RWW,SSSP,DWS,Va,SRSV,SRSV1}, etc, and the references therein). The small time large deviation principle (LDP) examines the asymptotic behavior of the tails of a family of probability measures at a given point in space when the time is very small. That is, the limiting behavior of the solution in a time interval $[0, t]$ as $t\to 0$. In this direction, the first and most celebrated work is due to Varadhan \cite{SRSV}, where he considered the small-time asymptotics for finite-dimensional diffusion processes. For the small-time LDP for  infinite dimensional diffusion processes, the interested readers are referred to see \cite{HAb,TAMH,ZQC,MHKM,AKMTM6,MRTZ1,SRSV2,TSZ}, etc. The small time LDP for stochastic  2D  Navier-Stokes equations, stochastic 3D tamed Navier-Stokes equations, stochastic quasi-geostrophic equations in the sub-critical case, stochastic two-dimensional non-Newtonian fluids, 3D stochastic primitive equations, SPDEs with locally monotone coefficients, stochastic convective Brinkman-Forchheimer equations, scalar stochastic conservation laws, are established in \cite{TXTZ,MRTZ,WLMR1,HLCS,ZDRZ1,SLWL,MTM12,ZDRZ2}, respectively. Even though the work \cite{SLWL} covers the case of SPDEs with locally monotone coefficients like stochastic power law fluid equations, it won't cover the system under our consideration for arbitrary values of $r$ (for example $r>\frac{pd}{d-p}$). Therefore, we need a separate analysis for the small noise asymptotics of the  system \eqref{31}. 
	
	\subsection{Novelties, difficulties and approaches}
	This article is dedicated to addressing two primary objectives. The first objective is to establish the existence and uniqueness of solutions to the stochastic Ladyzhenskaya-Smagorinsky equations, denoted as \eqref{31}, which include damping and are driven by a multiplicative Gaussian noise. The second objective focuses on deriving the small-time LDP for these equations with damping. To the best of the authors' knowledge, the analysis and results presented in this article have not been previously explored in the existing literature, thereby contributing novel insights to the field. The novelties of this article are: 
	\begin{enumerate}
		\item The existence and uniqueness of strong solution to the system \eqref{31} (with appropriate assumptions on the exponents and coefficients see  below) with the initial data $\u_0\in\H$ is obtained in the space 
			\begin{align*}
			\u&\in\mathrm{L}^{q}(\Omega;\mathrm{L}^{\infty}(0,T;\H))\cap{\mathrm{L}^{p}(\Omega;\mathrm{L}^p  (0,T;\V_p))}\cap\mathrm{L}^{r}(\Omega;\mathrm{L}^{r}(0,T;\widetilde{\L}^{r})),
		\end{align*} 
		for  $q\in\left\{2, \frac{2(2p-d)}{p-1},{4+\xi}\right\} $, for some $\xi>0$ with $\mathbb{P}$-a.s., continuous trajectories in $\H$.
		
		\item Motivated by the works \cite{CLF,MTM8,NVK}, the energy equality (It\^o formula) for the solution to the system \eqref{31} is derived by  constructing an approximation of the strong solution within a finite-dimensional subspace spanned by the first $m$ eigenfunctions of an appropriately chosen compact operator (defined in Section \ref{ACO}).
		\item The small time LDP for the laws of the solutions to the system \eqref{31} is also obtained.
	\end{enumerate}

		\vspace{2mm}
	\noindent
\textbf{Summary of the results:}	Let us summarize the results obtained in the article to emphasize the dependence on the different parameters appearing in the system \eqref{31}:
	
	\vspace{1mm}
	\noindent
	For our first objective, we build upon the works \cite{chow,CLF,MTM8,NVK,MTM6}, and to establish the existence and uniqueness results, we employ a localized stochastic generalization of the Minty-Browder technique.
	\begin{enumerate}
		\item We first establish the hemicontinuity and the monotonicity properties of the operators $\mathscr{H}(\cdot)=\mu\mathcal{A}(\cdot)+\B(\cdot)+\beta \mathcal{C}(\cdot)$ and $\mathscr{H}(\cdot)+\eta\I(\cdot)$, with the following assumptions on the exponents and coefficients:
		\begin{enumerate}
			\item The operator $\mathscr{H}(\cdot)+\eta\I(\cdot)$ is globally monotone for $p\geq 2$ and $r\geq 4$ ($2\beta\mu\geq 1$ for $r=4$) (see \eqref{mon} and \eqref{2.18});
			\item The operator $\mathscr{H}(\cdot)$ is locally monotone for $p>\frac{d}{2}$ and $r\geq 2$ (see \eqref{mon1}). 
		\end{enumerate}
		\item Later, we demonstrate the existence of global strong solution followed by the uniqueness of the solution to the system \eqref{31} under the following conditions:
		\begin{enumerate}
			\item In the case of global monotone operator, we obtain the existence and uniqueness results for $p\geq 2$ and $r\geq 4$ ($2\beta\mu\geq 1$ for $r=4$) (see Theorem \ref{exis2});
			\item In the case of local monotone operators, we obtain the existence and uniqueness results for $p\geq \frac{d}{2}+1$, and $r\geq2$ (see Section \ref{EUSS}).
		\end{enumerate}
	\end{enumerate} 

	\vspace{1mm}
\noindent
In order to achieve our second objective, we assume that Hypothesis \ref{hyp1} holds (additional assumption on the noise coefficient), and our approach is based on the works \cite{AKMTM6,TSZ} (cf. \cite[Theorem 4.2.13]{DZ}). We demonstrate the small time LDP under the following conditions:
\begin{enumerate}
	\item In the case of global monotone operator, we establish the small time LDP   for $p\geq 2$ and $r\geq 4$ ($2\beta\mu\geq 1$ for $r=4$) (see Theorem \ref{maint},  Remark \ref{rem5.9} and Case 2 in both Lemmas  \ref{lem3.10} and \ref{lem3.11});
		\item In the case of local monotone operators, we establish the small time LDP  for $p\geq \frac{d}{2}+1$, and $r\geq2$ (see Theorem \ref{maint} and Case 1 in both Lemmas \ref{lem3.10} and \ref{lem3.11}).
\end{enumerate}

%
	
	\subsection{Organization of the paper}
	The rest of the paper is organized as follows: In the next section, we provide the essential  functional setting to obtain the global solvability results of the system \eqref{31}. Then we discuss the local monotonicity ($p\geq \frac{d}{2}+1,\ r\geq 2$), global monotonicity ($p\geq 2,\ r\geq 4$,  $2\beta\mu\geq 1$ for $r=4$) and demicontinuity properties of the linear and nonlinear operators (see Lemmas \ref{lem2.2}, \ref{lem2.5} and \ref{lem2.8}). Using these properties and under proper assumptions on the noise coefficient, we establish the existence of a pathwise strong solution to the system \eqref{31} in Sections \ref{sec3} and \ref{EUSS} (see Theorem \ref{exis2}). Under an additional assumption on the noise coefficient, we discuss the small time asymptotics of the  system \eqref{31}, 
	 by studying the effect of small, highly nonlinear, unbounded drifts (small time LDP) in the final section (see Theorem \ref{maint}).

	\section{Mathematical formulation}\label{sec2}\setcounter{equation}{0}
	This section is devoted for the necessary function spaces needed to obtain the global solvability results of the system \eqref{31}.  Furthermore, we discuss important properties of the nonlinear operators.

	\subsection{Function spaces} Let $\mathcal{O}\subset\R^d$ ($2\leq d\leq 4$) be a Poincar\'e domain. Let $\C_0^{\infty}(\mathcal{O};\R^d)$ denote the space of all infinitely differentiable functions  ($\R^d$-valued) with compact support in $\mathcal{O}\subset\R^d$.  The Lebesgue spaces are denoted by $\mathbb{L}^r(\mathcal{O})=\mathrm{L}^r(\mathcal{O};\R^d)$ and Sobolev spaces are represented by $\mathbb{W}^{k,p}(\mathcal{O}):=\mathrm{W}^{k,p}(\mathcal{O};\R^d)$ and $\H^k(\mathcal{O}):=\mathbb{W}^{k,2}$. We define 
	\begin{align*} 
		\mathscr{V}&:=\{\u\in\C_0^{\infty}(\mathcal{O},\R^d):\nabla\cdot\u=0\},\\
		\mathbb{H}&:=\text{the closure of}\ \mathscr{V} \ \text{in the Lebesgue space } \L^2(\mathcal{O}),\\
		\mathbb{V}&:=\text{the closure of}\ \mathscr{V} \ \text{in the Sobolev space } \H^1_0(\mathcal{O}),\\
		\widetilde{\L}^{r}&:=\text{the closure of}\ \mathscr{V} \ \text{in the Lebesgue space } \L^r(\mathcal{O}),\\
		\mathbb{V}_p&:=\text{the closure of}\ \mathscr{V} \ \text{in the Sobolev space } \mathbb{W}_0^{1,p}(\mathcal{O}),\\
		\mathcal{V}^s&:= \text{the closure of}\ \mathscr{V} \ \text{in the Sobolev space } \H^s(\mathcal{O}),
	\end{align*}
	for $p,r\in(2,\infty)$ and $s\in (0,\infty)$. Then under the above smoothness assumptions on the boundary, we characterize the spaces $\H$, $\V$,  $\widetilde{\L}^r$ and $\V_p$ as 
	$
	\H=\{\u\in\L^2(\mathcal{O}):\nabla\cdot\u=0\}$,  with norm  $\|\u\|_{\H}^2:=\int_{\mathcal{O}}|\u(x)|^2\d x,
	$ and
	$
	\V=\{\u\in\H^1(\mathcal{O}):\nabla\cdot\u=0\},$  with norm $ \|\u\|_{\V}^2:=\int_{\mathcal{O}}|\nabla\u(x)|^2\d x$ (using Poincar\'e inequality \eqref{2.1}),
	 $\widetilde{\L}^r=\{\u\in\L^r(\mathcal{O}):\nabla\cdot\u=0\},$ with norm $\|\u\|_{\widetilde{\L}^r}^r=\int_{\mathcal{O}}|\u(x)|^r\d x$, and $\V_p=\big\{\u\in\mathbb{W}^{1,p}_0(\mathcal{O}):\nabla\cdot\u=0\big\},$ with norm $\|\u\|_{\V_p}^p=\int_{\mathcal{O}}|\nabla\u(x)|^p\d x$ (using Poincar\'e inequality \eqref{2.1}), respectively.	Let $(\cdot,\cdot)$ denote the inner product in the Hilbert space $\H$ and $\langle \cdot,\cdot\rangle $ denote the induced duality between the spaces $\V$  and its dual $\V'$, $\V_p$ and its dual $\V_p'$, and $\widetilde{\L}^r$ and its dual $\widetilde{\L}^{r'},$ where $\frac{1}{r}+\frac{1}{r'}=1$. Note that $\H$ can be identified with its own dual $\H'$. The inner product in any Hilbert space $\V$ will be denoted by $(\cdot,\cdot)_{\V}$.   We endow the space $\V_p\cap\widetilde{\L}^{r}$ with the norm $\|\u\|_{\V_p}+\|\u\|_{\widetilde{\L}^{r}},$ for $\u\in\V_p\cap\widetilde{\L}^r$ and its dual $\V_p'+\widetilde{\L}^{r'}$ with the norm $$\inf\left\{\|\u_1\|_{\V'_p}+\|\u_2\|_{\wi\L^{r'}}:\u=\u_1+\u_2,\  \u_1\in\V'_p \ \text{ and } \ \u_2\in\wi\L^{r'}\right\}.$$   We first note that $\mathscr{V}\subset\V_p\cap\widetilde{\L}^{r}\subset\H$ and $\mathscr{V}$ is dense in $\H,\V_p$ and $\widetilde{\L}^{r},$ and hence $\V_p\cap\widetilde{\L}^{r}$ is dense in $\H$. We have the following continuous  embedding also:
	$$\V_p\cap\widetilde{\L}^{r}\hookrightarrow\H\equiv\H'\hookrightarrow\V_p'+\widetilde\L^{\frac{r}{r-1}}.$$ One can define equivalent norms on $\V_p\cap\widetilde\L^{r}$ and $\V_p'+\widetilde\L^{\frac{r}{r-1}}$ as  (see \cite{NAEG})
	\begin{align*}
		\|\u\|_{\V_p\cap\widetilde\L^{r}}=\left(\|\u\|_{\V_p}^2+\|\u\|_{\widetilde\L^{r}}^2\right)^{\frac{1}{2}}\ \text{ and } \ \|\u\|_{\V_p'+\widetilde\L^{\frac{r}{r-1}}}=\inf_{\u=\v+\w}\left(\|\v\|_{\V_p'}^2+\|\w\|_{\widetilde\L^{\frac{r}{r-1}}}^2\right)^{\frac{1}{2}}.
	\end{align*}
	
	\subsection{A compact operator}\label{ACO} The content of this subsection has been taken from \cite[Section 2.3]{ZBEM}. Consider the natural embedding $j:\V\to\H$ and its adjoint $j^*:\H\to\V$. Since the range of $j$ is dense in $\H$, the mapping $j^*$ is one-to-one. Let us define
	\begin{align}\label{2.2.1}\nonumber
		\D(\mathtt{A})&:=j^*(\H)\subset \V,\\
		\mathtt{A}\x&:=(j^*)^{-1}\x, \ \x\in \D(\mathtt{A}).
	\end{align}
	Note that for all $\x\in\D(\mathtt{A})$ and $\y\in\V$
\begin{align*}
	(\mathtt{A}\x,\y)_\H=(\x,\y)_{\V}.
\end{align*}
For any $s>2$,  it is clear that $\mathcal{V}^s$  is dense in $\V$ and the embedding $j_s:\mathcal{V}^s \hookrightarrow \V$ is continuous. Then, there exists a Hilbert space $\mathbb{U}$ (cf.  \cite[Lemma C.1]{ZBEM} or \cite{KHMW}) such that $\mathbb{U}\subset \mathcal{V}^s$, $\mathbb{U}$ is dense in $\mathcal{V}^s$  and 
\begin{align*}
	\text{the natural embedding } \iota_s:\mathbb{U}\hookrightarrow \mathcal{V}^s \text{ is compact.} 
\end{align*}It implies that 
\begin{align*}
	\mathbb{U}\xhookrightarrow[\iota_s]{}\mathcal{V}^s \xhookrightarrow[j_s]{}\V \xhookrightarrow[j]{}\H\cong \H'\xhookrightarrow[j_s']{} (\mathcal{V}^s)' \xhookrightarrow[\iota_s']{} \mathbb{U}'.
\end{align*}Consider the composition 
\begin{align*}
	\iota:j\circ j_s\circ \iota_s:\mathbb{U}\to \H
\end{align*}and its adjoint 
\begin{align*}
	\iota^*:=(j\circ j_s\circ \iota_s)^*=\iota_s^*\circ j_s^*\circ j^*:\H\to \mathbb{U}.
\end{align*}We have that $\iota$ is compact and since its range is dense in $\H$, $\iota^*:\H\to\mathbb{U}$ is one-one. Let us define 
\begin{align}\label{2.2.2}\nonumber
	\D(\mathcal{L})&:=\iota^*(\H)\subset \mathbb{U},\\
	\mathcal{L}\x&:=(\iota^*)^{-1}\x, \  \x\in\D(\mathcal{L}). 
\end{align}Also, we have that $\mathcal{L}:\D(\mathcal{L})\to\H$ is onto, $\D(\mathcal{L})$ is dense in $\H$ and 
\begin{align*}
	(\mathcal{L}\x,\y)_\H=(\x,\y)_\mathbb{U}, \ \x\in\D(\mathcal{L}), \ \y\in\mathbb{U}.
\end{align*}Furthermore, for $\x\in\D(\mathcal{L})$, 
\begin{align*}
	\mathcal{L}\x=(\iota^*)^{-1}\x= (j^*)^{-1}\circ (j_s^*)^{-1}\circ (\iota_s^*)^{-1}\x=\mathtt{A}\circ (j_s^*)^{-1}\circ (\iota_s^*)^{-1}\x,
\end{align*}where $\mathtt{A}$ is defined in \eqref{2.2.1}. Since the operator $\mathcal{L}$ is self-adjoint and $\mathcal{L}^{-1}$ is compact, there exists an orthonormal basis $\{\mathbf{e}_k\}_{k\in\N}$ of $\H$ such that
\begin{align}\label{2.2.3}
	\mathcal{L}\mathbf{e}_k=\mu_k\mathbf{e}_k, \ k\in\N,
\end{align}that is, $\{\mathbf{e}_k\}_{k\in\N}$ are the eigenfunctions and $\{\mu_k\}_{k\in\N}$ are the corresponding eigenvalues of operator $\mathcal{L}$. Note that $\mathbf{e}_k\in\mathbb{U}, \ k\in\N$, because $\D(\mathcal{L})\subset \mathbb{U}$.
	
	Let us fix $n\in\N$ and let $\Pi_n$ be the operator from $\mathbb{U}'$ to $\mathrm{span}\{\mathbf{e}_1,\ldots,\mathbf{e}_n\}=:\H_n$ defined by 
	\begin{align}\label{2.2.4}
		\Pi_n\x^*:=\sum_{k=1}^n\langle \x^*,\mathbf{e}_k\rangle_{\mathbb{U}'\times \mathbb{U}}\mathbf{e}_k, \ \x^*\in\mathbb{U}'.
	\end{align}We will consider the restriction of the operator $\Pi_n$ to the space $\H$ denoted still by the same. In particular, we have $\H\hookrightarrow \mathbb{U}'$, that is, every element $\x\in\H$ induces a functional $\x^*\in\mathbb{U}'$ by 
\begin{align}\label{2.2.5}
	\langle \x^*,\y\rangle_{\mathbb{U}'\times \mathbb{U}}:=(\x,\y), \y\in\mathbb{U}.
	\end{align}Thus the restriction of $\Pi_n$ to $\H$ is given by 
\begin{align}\label{2.2.6}
	\Pi_n\x:=\sum_{k=1}^n(\x,\mathbf{e}_k)\mathbf{e}_k, \ \x\in\H.
\end{align}Hence, in particular, $\Pi_n$ is the orthogonal projection from $\H$ onto $\H_n$. 
\begin{lemma}[{\cite[Lemma 2.4]{ZBEM}}]
	For every $\x\in\mathbb{U}$ and $s>2$, we have 
	\begin{enumerate}
		\item $\lim\limits_{n\to\infty}\|\Pi_n\x-\x\|_\mathbb{U} =0$;
	\item 	$\lim\limits_{n\to\infty}\|\Pi_n\x-\x\|_{\mathcal{V}^s} =0$;
	\item $\lim\limits_{n\to\infty}\|\Pi_n\x-\x\|_\mathbb{V} =0$.
	\end{enumerate}
\end{lemma}

	\subsection{The operator $\mathcal{A}$}\label{OptA} Let $\mathcal{P}_p : \L^p(\mathcal{O}) \to\wi\L^p$ denote the \emph{Helmholtz-Hodge  projection} (\cite{DFHM}). For $p=2$, $\mathcal{P}:=\mathcal{P}_2$ becomes an orthogonal projection and for $2<p<\infty$, it is a bounded linear operator. Moreover, $\mathcal{P}$ maps $\H^{m-1}(\mathcal{O})$ into itself and is bounded if $\mathcal{O}$ is of class $\C^{m}$ (\cite[Remark 1.6]{Te}). Let us define the operator $\mathcal{A}:\V_p\to\V_p'$ by 
	\begin{align}
		\mathcal{A}(\x)=-\mathcal{P}\mathrm{div}\left(\left(1+|\nabla\x|^2\right)^{\frac{p-2}{2}}\nabla\x\right),
	\end{align}
	for all $\x\in\V_p$. For $\y\in\V_p$, it can be easily seen that 
	\begin{align*}
		|\langle\mathcal{A}(\x),\y\rangle|&=|\langle\left(1+|\nabla\x|^2\right)^{\frac{p-2}{2}}\nabla\x,\nabla\y\rangle|\leq 2^{\frac{p-2}{2}}\left(\|\nabla\x\|_{\H}\|\nabla\y\|_{\H}+\|\nabla\x\|_{\wi\L^p}^{p-1}\|\nabla\y\|_{\wi\L^p}\right)\nonumber\\&\leq C\left(1+\|\x\|_{\V_p}^{p-2}\right)\|\x\|_{\V_p}\|\y\|_{\V_p},
	\end{align*}
	for all $\y\in\V_p$, so that $\|\mathcal{A}(\x)\|_{\V_p'}\leq C\left(1+\|\x\|_{\V_p}^{p-2}\right)\|\x\|_{\V_p}$. Since 
	\begin{align*}
		2\langle\left(1+|\nabla\x|^2\right)^{\frac{p-2}{2}}\nabla\x,\nabla\x
		\rangle\geq \|\nabla\x\|_{\H}^2+\|\nabla\x\|_{\wi\L^p}^p,
	\end{align*}
	we have 
	\begin{align}\label{2.2}
		\langle\mathcal{A}(\x),\x\rangle=\langle\left(1+|\nabla\x|^2\right)^{\frac{p-2}{2}}\nabla\x,\nabla\x\rangle\geq\frac{1}{2}\left[\|\nabla\x\|_{\H}^2+\|\nabla\x\|_{\wi\L^p}^p\right]. 
	\end{align}
	Using Taylor's formula and H\"older's inequalities, we find 
	\begin{align}\label{2.3}
		&	|\langle\mathcal{A}(\x)-\mathcal{A}(\y),\z\rangle|\nonumber\\&\leq\left|\left< \left(1+|\nabla\x|^2\right)^{\frac{p-2}{2}}(\nabla\x-\nabla\y),\nabla\z\right>\right|\nonumber\\&\quad+\left|\left< \left(\left(1+|\nabla\x|^2\right)^{\frac{p-2}{2}}-\left(1+|\nabla\y|^2\right)^{\frac{p-2}{2}}\right)\nabla\y,\nabla\z\right>\right|\nonumber\\&\leq \bigg\{2^{\frac{p-2}{2}}\left(1+\|\nabla\x\|_{\wi\L^p}\right)\nonumber\\&\qquad+(p-2)2^{\frac{p-4}{2}}\left[\left(1+\|\nabla\x\|_{\wi\L^p}^{p-4}+\|\nabla\y\|_{\wi\L^p}^{p-4}\right)\left(\|\nabla\x\|_{\wi\L^p}+\|\nabla\y\|_{\wi\L^p}\right)\|\nabla\y\|_{\wi\L^p}\right]\bigg\}\nonumber\\&\quad\times\|\nabla(\x-\y)\|_{\wi\L^p}\|\nabla\z\|_{\wi\L^p},
	\end{align}
	for all $\z\in\V_p$. Thus the operator $\mathcal{A}(\cdot):\V_p\to\V_p'$ is a locally Lipschitz operator. 
	
	\subsection{The bilinear operator $\B(\cdot)$}\label{OptB}
	Let us define the \emph{trilinear form} $b(\cdot,\cdot,\cdot):\V\times\V\times\V\to\R$ by $$b(\x,\y,\z)=\int_{\mathcal{O}}(\x(x)\cdot\nabla)\y(x)\cdot\z(x)\d x=\sum_{i,j=1}^n\int_{\mathcal{O}}\x_i(x)\frac{\partial \y_j(x)}{\partial x_i}\z_j(x)\d x.$$ If $\x, \y$ are such that the linear map $b(\x, \y, \cdot) $ is continuous on $\V$, the corresponding element of $\V'$ is denoted by $\B(\x, \y)$. We also represent  $\B(\x) = \B(\x, \x)=\mathcal{P}[(\x\cdot\nabla)\x]$.	An integration by parts gives 
	\begin{equation*}
		\left\{
		\begin{aligned}
			b(\x,\y,\y) &= 0,\text{ for all }\x,\y \in\V,\\
			b(\x,\y,\z) &=  -b(\x,\z,\y),\text{ for all }\x,\y,\z\in \V.
		\end{aligned}
		\right.\end{equation*}
	Making use of H\"older's and Sobolev's inequalities,
	we find
	\begin{align}
		\left|\langle \B(\x,\x),\y\rangle \right|=\left|b(\x,\y,\x)\right|\leq\|\y\|_{\V}\|\x\|_{\wi\L^4}^2\leq C\|\x\|_{\V}^2\|\y\|_{\V},
	\end{align}
	for all $\x,\y\in\V$ and since $2\leq d\leq 4$, we get $\|\B(\x)\|_{\V'}\leq C\|\x\|_{\V}^2$. One can also show that 
	\begin{align*}
		\|\B(\x)-\B(\y)\|_{\V'}\leq C(\|\x\|_{\V}+\|\y\|_{\V})\|\x-\y\|_{\V},
	\end{align*}
	and hence the operator $\B(\cdot):\V\to\V'$ is locally Lipschitz. 	Using H\"older's and Sobolev's inequalities, we obtain 
	\begin{align*}
		|\langle\B(\x)-\B(\y),\z\rangle|&\leq|\langle\B(\x-\y,\x),\z\rangle|+\langle\B(\y,\x-\y),\z\rangle|\nonumber\\&\leq\left(\|\x-\y\|_{\wi\L^{\frac{pd}{d-p}}}\|\nabla\x\|_{\wi\L^p}+\|\y\|_{\wi\L^{\frac{pd}{d-p}}}\|\nabla(\x-\y)\|_{\wi\L^p}\right)\|\z\|_{\wi\L^{\frac{pd}{pd+p-2d}}}\nonumber\\&\leq C\left(\|\x\|_{\V_p}+\|\y\|_{\V_p}\right)\|\x-\y\|_{\V_p}\|\z\|_{\V_p},
	\end{align*}
	for all $\x,\y,\z\in\V_p$ and $p\geq \frac{3d}{d+2}$. Thus the operator $\B(\cdot):\V_p\to\V_p'$ is a locally Lipschitz operator. Using H\"older's and  interpolation inequalities, we get 
	{	\begin{align}\label{212}
			\left|\langle \B(\x,\x),\y\rangle \right|=\left|b(\x,\y,\x)\right|\leq \|\x\|_{\widetilde{\L}^{r}}\|\x\|_{\widetilde{\L}^{\frac{2r}{r-2}}}\|\y\|_{\V}\leq\|\x\|_{\widetilde{\L}^{r}}^{\frac{r}{r-2}}\|\x\|_{\H}^{\frac{r-4}{r-2}}\|\y\|_{\V},
		\end{align}
		for all $\y\in\V\cap\widetilde{\L}^{r}$ and $r>4$.} Thus, for $r>4$, we have 
	\begin{align}\label{2.9a}
		\|\B(\x)\|_{\V'+\widetilde{\L}^{\frac{r}{r-1}}}\leq\|\x\|_{\widetilde{\L}^{r}}^{\frac{r}{r-2}}\|\x\|_{\H}^{\frac{r-4}{r-2}}.
	\end{align}
	
	For $\x,\y\in\V\cap\widetilde{\L}^{r}$, we also find
	\begin{align}\label{lip}
		\|\B(\x)-\B(\y)\|_{\V'+\widetilde{\L}^{\frac{r}{r-1}}}\leq \left(\|\x\|_{\H}^{\frac{r-4}{r-2}}\|\x\|_{\widetilde{\L}^{r}}^{\frac{2}{r-2}}+\|\y\|_{\H}^{\frac{r-4}{r-2}}\|\y\|_{\widetilde{\L}^{r}}^{\frac{2}{r-2}}\right)\|\x-\y\|_{\widetilde{\L}^{r}},
	\end{align}
	for $r>4$, so that  the operator $\B(\cdot):\V\cap\widetilde{\L}^{r}\to\V'+\widetilde{\L}^{\frac{r}{r-1}}$ is a locally Lipschitz operator. For $r=4$,  a calculation similar to \eqref{lip} yields 
	\begin{align*}
		\|\B(\x)-\B(\y)\|_{\V'+\widetilde{\L}^{\frac{4}{3}}}&\leq \left(\|\x\|_{\widetilde{\L}^{4}}+\|\y\|_{\widetilde{\L}^{4}}\right)\|\x-\y\|_{\widetilde{\L}^{4}},
	\end{align*}
	hence $\B(\cdot):\V\cap\widetilde{\L}^{4}\to\V'+\widetilde{\L}^{\frac{4}{3}}$ is a locally Lipschitz operator. 
	
	\subsection{The nonlinear operator $\mathcal{C}(\cdot)$}\label{OptC}
	Let us define  the operator $\mathcal{C}:\V\cap\wi\L^{r}\to\V'+\wi\L^{\frac{r}{r-1}}$ by  $\mathcal{C}(\x):=\mathcal{P}(|\x|^{r-2}\x).$  It is immediate that $\langle\mathcal{C}(\x),\x\rangle =\|\x\|_{\widetilde{\L}^{r}}^{r},$ for all $\x\in\V\cap\wi\L^{r}$. It has been shown in \cite{MTM8} that 
	\begin{align}\label{213}
		|\langle \mathcal{C}(\x)-\mathcal{C}(\y),\z\rangle|&\leq (r-1)\left(\|\x\|_{\widetilde{\L}^{r}}+\|\y\|_{\widetilde{\L}^{r}}\right)^{r-2}\|\x-\y\|_{\widetilde{\L}^{r}}\|\z\|_{\widetilde{\L}^{r}},
	\end{align}
	for all $\x,\y,\z\in\V\cap\widetilde{\L}^{r}$. 
	Thus the operator $\mathcal{C}(\cdot):\widetilde{\L}^{r}\to\widetilde{\L}^{\frac{r}{r-1}}$ is locally Lipschitz. For any $r\in[1,\infty)$, we have (see \cite{MTM8})
	\begin{align}\label{2.23}
		&\langle\mathcal{C}(\x)-\mathcal{C}(\y),\x-\y\rangle\geq \frac{1}{2}\||\x|^{\frac{r-2}{2}}(\x-\y)\|_{\H}^2+\frac{1}{2}\||\y|^{\frac{r-2}{2}}(\x-\y)\|_{\H}^2\geq 0,
	\end{align}
	for $r\geq 1$. Furthermore 
	\begin{align}\label{a215}
		\|\x-\y\|_{\wi\L^{r}}^{r}&\leq 2^{r-3}\left[\||\x|^{\frac{r-2}{2}}(\x-\y)\|_{\H}^2+\||\y|^{\frac{r-2}{2}}(\x-\y)\|_{\H}^2\right],
	\end{align}
	for $r\geq 1$ (replace $2^{r-3}$ with $1,$ for $2\leq r\leq 3$), so that 
	\begin{align}
		&\langle\mathcal{C}(\x)-\mathcal{C}(\y),\x-\y\rangle\geq\frac{1}{2^{r-2}}\|\x-\y\|_{\wi\L^{r}}^{r},
	\end{align}
	for all $\x,\y\in\wi\L^{r}$. 
	
	\subsection{Monotonicity}
	Let us now discuss  the monotonicity as well as the hemicontinuity properties of the linear and nonlinear operators, which plays a crucial role in the global solvability of the system \eqref{31}. 
	\begin{definition}[\cite{VB}]
		Let $\X$ be a Banach space and let $\X'$ be its topological dual.
		An operator $\mathscr{H}:\mathrm{D}(\mathscr{H})\rightarrow
		\X',$ $\mathrm{D}(\mathscr{H})\subset \X$ is said to be
		\emph{monotone} if
		$$\langle\mathscr{H}(x)-\mathscr{H}(y),x-y\rangle\geq
		0,\ \text{ for all } \ x,y\in \mathrm{D}(\mathscr{H}).$$ 
		The operator $\mathscr{H}(\cdot)$ is said to be \emph{hemicontinuous}, if for all $x, y\in\X$ and $z\in\X',$ $$\lim_{\lambda\to 0}\langle\mathscr{H}(x+\lambda y),z\rangle=\langle\mathscr{H}(x),z\rangle.$$
		The operator $\mathscr{H}(\cdot)$ is called \emph{demicontinuous}, if for all $x\in\mathrm{D}(\mathscr{H})$ and $y\in\X$, the functional $x \mapsto\langle \mathscr{H}(x), y\rangle$  is continuous, or in other words, $x_k\to x$ in $\X$ implies $\mathscr{H}(x_k)\xrightarrow{w}\mathscr{H}(x)$ in $\X'$. Clearly demicontinuity implies hemicontinuity. 
	\end{definition}

	\begin{lemma}\label{lem2.2}
		Let $\x,\y\in\V_p\cap\wi\L^{r}$, for $p\geq 2$ and $r>4$. Then,	for the operator $$\mathscr{H}(\x)=\mu\mathcal{A}(\x)+\B(\x)+\beta\mathcal{C}(\x),$$ we  have 
		\begin{align}\label{mon}
			\langle\mathscr{H}(\x)-\mathscr{H}(\y),\x-\y\rangle+\eta\|\x-\y\|_{\H}^2\geq 0,
		\end{align}
		where \begin{align}\label{eta}
			\eta=\frac{p-4}{2\mu(p-2)}\left(\frac{2}{\beta\mu (p-2)}\right)^{\frac{2}{p-4}}.
		\end{align} 
		That is, the operator $\mathscr{H}+\eta\mathrm{I}$ is a monotone operator from $\V_p$ to $\V_p'$. 
		
		For $r=4$ with $2\beta\mu \geq 1$, the operator $\mathscr{H}(\cdot):\V_p\cap\widetilde{\L}^{4}\to \V_p'+\widetilde{\L}^{\frac{4}{3}}$ is globally monotone, that is, for all $\x,\y\in\V_p\cap\wi\L^4$, we have 
		\begin{align}\label{2.18}\langle\mathscr{H}(\x)-\mathscr{H}(\y),\x-\y\rangle\geq 0.\end{align}
	\end{lemma}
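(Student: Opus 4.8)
The plan is to test the difference $\G(\u)-\G(\v)$ against $\w:=\u-\v$ and estimate the three constituent operators separately. The point is that the viscous operator $\mathcal{A}$ and the damping operator $\mathcal{C}$ each furnish a nonnegative ``good'' quantity, whereas the convective operator $\B$ is sign-indefinite and must be dominated by an arbitrarily small share of those two quantities together with a multiple of $\|\w\|_{\H}^2$. The whole estimate then reads off from how efficiently the convective term can be absorbed.

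For the viscous part I would use the pointwise strong monotonicity of the map $\boldsymbol\xi\mapsto(1+|\boldsymbol\xi|^2)^{\frac{p-2}{2}}\boldsymbol\xi$ for $p\geq 2$: its Jacobian dominates the identity, so the operator $\mathcal{A}$ contributes, through its monotonicity, the coercive quantity $\mu\|\nabla\w\|_{\H}^2$ (and, if needed, an additional $\mu\|\nabla\w\|_{\wi\L^p}^p$ lower bound). For the damping part I invoke \eqref{2.23}, which already yields $\langle\mathcal{C}(\u)-\mathcal{C}(\v),\w\rangle\geq\frac12\||\u|^{\frac{r-2}{2}}\w\|_{\H}^2+\frac12\||\v|^{\frac{r-2}{2}}\w\|_{\H}^2\geq 0$. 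Weighting by $\mu$ and $\beta$, these two (routine) steps produce the reservoir $\mu\|\nabla\w\|_{\H}^2+\frac{\beta}{2}\||\v|^{\frac{r-2}{2}}\w\|_{\H}^2$, after discarding the nonnegative $|\u|$-term.

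The crux is the convective term. Using the antisymmetry $b(\u,\v,\v)=0$, all cross terms collapse and $\langle\B(\u)-\B(\v),\w\rangle=b(\w,\v,\w)=-b(\w,\w,\v)$, whence $|\langle\B(\u)-\B(\v),\w\rangle|\leq\int_{\mathcal{O}}|\nabla\w|\,|\w|\,|\v|\,\d x$. Cauchy--Schwarz extracts $\|\nabla\w\|_{\H}$, and a Hölder split of $|\v|^2|\w|^2$ against the factors $|\v|^{r-2}|\w|^2$ and $|\w|^2$ gives, for $r>4$, the bound $|\langle\B(\u)-\B(\v),\w\rangle|\leq\|\nabla\w\|_{\H}\,\||\v|^{\frac{r-2}{2}}\w\|_{\H}^{\frac{2}{r-2}}\,\|\w\|_{\H}^{\frac{r-4}{r-2}}$. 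I would then apply a weighted Young's inequality with the conjugate exponents $2,\ r-2,\ \tfrac{2(r-2)}{r-4}$, tuning the weights so that the first factor is absorbed by $\mu\|\nabla\w\|_{\H}^2$ and the second by $\frac{\beta}{2}\||\v|^{\frac{r-2}{2}}\w\|_{\H}^2$; the only surviving term is a constant multiple of $\|\w\|_{\H}^2$, the constant being exactly the explicit $\eta$ of \eqref{eta} produced by optimizing the Young weights against $\mu$ and $\beta$. This is precisely \eqref{mon}.

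The borderline $r=4$ is the clean endpoint: the residual exponent $\frac{r-4}{r-2}$ vanishes, the interpolation degenerates, and Cauchy--Schwarz alone gives $|b(\w,\w,\v)|\leq\|\nabla\w\|_{\H}\,\||\v|\w\|_{\H}$. The splitting $\|\nabla\w\|_{\H}\||\v|\w\|_{\H}\leq\frac{\lambda}{2}\|\nabla\w\|_{\H}^2+\frac{1}{2\lambda}\||\v|\w\|_{\H}^2$ is absorbed with no leftover precisely when some $\lambda$ satisfies $\frac{\lambda}{2}\leq\mu$ and $\frac{1}{2\lambda}\leq\frac{\beta}{2}$, i.e. $\frac1\beta\leq\lambda\leq 2\mu$, which is solvable exactly under $2\beta\mu\geq 1$; then no $\|\w\|_{\H}^2$ term survives and one obtains the genuine global monotonicity \eqref{2.18}. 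The main obstacle I anticipate is the convective estimate itself: passing to the antisymmetric form, selecting the interpolation compatible with the two available good quantities, and carrying out the weighted Young's inequality so that the residual constant is exactly $\eta$ and collapses to $0$ at the threshold $2\beta\mu=1$ when $r=4$.
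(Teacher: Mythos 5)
Your proposal is correct and follows essentially the same route as the paper: the integral-form (Jacobian) monotonicity of $\boldsymbol{\xi}\mapsto(1+|\boldsymbol{\xi}|^2)^{\frac{p-2}{2}}\boldsymbol{\xi}$ for the viscous reservoir, the bound \eqref{2.23} for the damping reservoir, the identity $\langle\B(\u)-\B(\v),\w\rangle=-b(\w,\w,\v)$ with $\w=\u-\v$, and absorption of the convective term by a H\"older--Young interpolation whose only leftover is $\eta\|\w\|_{\H}^2$, degenerating at $r=4$ to global monotonicity exactly under $2\beta\mu\geq 1$. Your single three-exponent Young step is merely a repackaging of the paper's two-stage estimate (Cauchy--Schwarz/Young producing $\frac{\mu}{2}\|\w\|_{\V_2}^2+\frac{1}{2\mu}\|\v\w\|_{\H}^2$, then a H\"older split of $\|\v\w\|_{\H}^2$); the only discrepancy is that your computation yields $\eta$ with $r$ where \eqref{eta} displays $p$, which is a typo in the paper's statement rather than a flaw in your argument.
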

	\begin{proof}
		Let us estimate $-\mu 	\left<\mathcal{A}(\x)-\mathcal{A}(\y),\x-\y\right>$ using an integration by parts as
		\begin{align}\label{ae}
			&\mu 	\left<\mathcal{A}(\x)-\mathcal{A}(\y),\x-\y\right>\nonumber\\&=
			-	\mu 	\left<\mathrm{div}\left(\left(1+|\nabla\x|^2\right)^{\frac{p-2}{2}}\nabla\x\right)- \mathrm{div}\left(\left(1+|\nabla\y|^2\right)^{\frac{p-2}{2}}\nabla\y\right),\x-\y\right>
			\nonumber\\&= \mu\left(\left(1+|\nabla\x|^2\right)^{\frac{p-2}{2}}\nabla\x-\left(1+|\nabla\y|^2\right)^{\frac{p-2}{2}}\nabla\y,\nabla(\x-\y)\right)
			\nonumber\\&= \mu\left<\int_0^1\frac{\d}{\d\theta}\left[\left(1+|\nabla(\theta\x+(1-\theta\y))|^2\right)^{\frac{p-2}{2}}\nabla(\theta\x+(1-\theta)\y)\right]\d\theta ,\nabla(\x-\y)\right>\nonumber\\&= \mu\left<\int_0^1\left(1+|\nabla(\theta\x+(1-\theta\y))|^2\right)^{\frac{p-2}{2}}\d\theta\nabla(\x-\y),\nabla(\x-\y)\right>\nonumber\\&\quad+\mu(p-2)\bigg<\int_0^1\left(1+|\nabla(\theta\x+(1-\theta\y))|^2\right)^{\frac{p-4}{2}}|\nabla(\theta\x+(1-\theta\y))|^2\d\theta\nonumber\\&\qquad\times\nabla(\x-\y),\nabla(\x-\y)\bigg>\nonumber\\&\geq \mu\left<\int_0^1\left(1+|\nabla(\theta\x+(1-\theta\y))|^2\right)^{\frac{p-2}{2}}\d\theta\nabla(\x-\y),\nabla(\x-\y)\right>\nonumber\\&\geq \mu\|\nabla(\x-\y)\|_{\H}^2. 
		\end{align}
		From \eqref{2.23}, we easily have 
		\begin{align}\label{2.27}
			\beta	\langle\mathcal{C}(\x)-\mathcal{C}(\y),\x-\y\rangle \geq \frac{\beta}{2}\||\y|^{\frac{p-2}{2}}(\x-\y)\|_{\H}^2. 
		\end{align}
		Since $\langle\B(\x,\x-\y),\x-\y\rangle=0$, we get $	\langle \B(\x)-\B(\y),\x-\y\rangle =-\langle\B(\x-\y,\x-\y),\y\rangle$, and a calculation similar to the proof of \cite[Theorem 2.2]{MTM8}  yields 
		\begin{align}\label{2.30}
			&|\langle\B(\x-\y,\x-\y),\y\rangle|\nonumber\\&\leq\frac{\mu }{2}\|\x-\y\|_{\V}^2+\frac{\beta}{2}\||\y|^{\frac{r-2}{2}}(\x-\y)\|_{\H}^2+\frac{r-4}{2\mu(r-2)}\left(\frac{2}{\beta\mu (r-2)}\right)^{\frac{2}{r-4}}\|\x-\y\|_{\H}^2.
		\end{align}
		Combining \eqref{ae}, \eqref{2.27} and \eqref{2.30}, we get 
		\begin{align}
			\langle\mathscr{H}(\x)-\mathscr{H}(\y),\x-\y\rangle+\frac{r-4}{2\mu(r-2)}\left(\frac{2}{\beta\mu (r-2)}\right)^{\frac{2}{r-4}}\|\x-\y\|_{\H}^2\geq\frac{\mu }{2}\|\x-\y\|_{\V}^2\geq 0,
		\end{align}
		for $r>4$ and the estimate \eqref{mon} follows.

		From \eqref{2.23}, we infer that 
		\begin{align}\label{231}
			\beta\langle\mathcal{C}(\x)-\mathcal{C}(\y),\x-\y\rangle\geq\frac{\beta}{2}\|\y(\x-\y)\|_{\H}^2. 
		\end{align}
		We estimate $|\langle\B(\x-\y,\x-\y),\y\rangle|$ using H\"older's and Young's inequalities as 
		\begin{align}\label{232}
			|\langle\B(\x-\y,\x-\y),\y\rangle|\leq\|\y(\x-\y)\|_{\H}\|\x-\y\|_{\V} \leq\mu \|\x-\y\|_{\V}^2+\frac{1}{4\mu }\|\y(\x-\y)\|_{\H}^2.
		\end{align}
		Combining \eqref{ae}, \eqref{231} and \eqref{232}, we obtain 
		\begin{align}
			\langle\mathscr{H}(\x)-\mathscr{H}(\y),\x-\y\rangle\geq\frac{1}{2}\left(\beta-\frac{1}{2\mu }\right)\|\y(\x-\y)\|_{\H}^2\geq 0,
		\end{align}
		provided $2\beta\mu \geq 1$. 
	\end{proof}
	
	\begin{remark}
		One can estimate the term $\mu 	\left<\mathcal{A}(\x)-\mathcal{A}(\y),\x-\y\right>$ in the following way also: 
		\begin{align}\label{ae1}
			&\mu 	\left<\mathcal{A}(\x)-\mathcal{A}(\y),\x-\y\right>\nonumber\\&=
			-	\mu 	\left<\mathrm{div}\left(\left(1+|\nabla\x|^2\right)^{\frac{p-2}{2}}\nabla\x\right)- \mathrm{div}\left(\left(1+|\nabla\y|^2\right)^{\frac{p-2}{2}}\nabla\y\right),\x-\y\right>
			\nonumber\\&= \mu\left(\left(1+|\nabla\x|^2\right)^{\frac{p-2}{2}}\nabla\x-\left(1+|\nabla\y|^2\right)^{\frac{p-2}{2}}\nabla\y,\nabla(\x-\y)\right)\nonumber\\&=\mu\left((1+|\nabla\x|^2)^{\frac{p-2}{2}},|\nabla(\x-\y)|^2\right)+\mu\left((1+|\nabla\y|^2)^{\frac{p-2}{2}},|\nabla(\x-\y)|^2\right)\nonumber\\&\quad+\mu\left((1+|\nabla\x|^2)^{\frac{p-2}{2}}\nabla\y,\nabla(\x-\y)\right)-\mu\left((1+|\nabla\y|^2)^{\frac{p-2}{2}}\nabla\x,\nabla(\x-\y)\right)\nonumber\\&=\mu\|(1+|\nabla\x|^2)^{\frac{p-2}{4}}\nabla(\x-\y)\|_{\H}^2+\mu\|(1+|\nabla\y|^2)^{\frac{p-2}{4}}\nabla(\x-\y)\|_{\H}^2\nonumber\\&\quad-\mu\left(\left(1+|\nabla\x|^2\right)^{\frac{p-2}{2}},|\nabla\y|^2\right)-\mu\left(\left(1+|\nabla\y|^2\right)^{\frac{p-2}{2}},|\nabla\x|^2\right)\nonumber\\&\quad+\mu\left(\left(1+|\nabla\x|^2\right)^{\frac{p-2}{2}}+\left(1+|\nabla\y|^2\right)^{\frac{p-2}{2}},(\nabla\x\cdot\nabla\y)\right)\nonumber\\&=\frac{\mu}{2}\|(1+|\nabla\x|^2)^{\frac{p-2}{4}}\nabla(\x-\y)\|_{\H}^2+\frac{\mu}{2}\|(1+|\nabla\y|^2)^{\frac{p-2}{4}}\nabla(\x-\y)\|_{\H}^2\nonumber\\&\quad+\frac{\mu}{2}\left(\left(1+|\nabla\x|^2\right)^{\frac{p-2}{2}}-\left(1+|\nabla\y|^2\right)^{\frac{p-2}{2}},(|\nabla\x|^2-|\nabla\y|^2)\right)\nonumber\\&\geq\frac{\mu}{2}\|(1+|\nabla\x|^2)^{\frac{p-2}{4}}\nabla(\x-\y)\|_{\H}^2+\frac{\mu}{2}\|(1+|\nabla\y|^2)^{\frac{p-2}{4}}\nabla(\x-\y)\|_{\H}^2,
		\end{align}
		where we have used the fact that  $$\left(\left(1+|\nabla\x|^2\right)^{\frac{p-2}{2}}-\left(1+|\nabla\y|^2\right)^{\frac{p-2}{2}},((1+|\nabla\x|^2)-(1+|\nabla\y|^2))\right)\geq 0.$$	It can be easily seen that 
		\begin{align*}
			&2	\int_{\mathcal{O}}(1+|\nabla\x(x)|^2)^{\frac{p-2}{2}}|\nabla(\x(x)-\y(x))|^2\d x\nonumber\\&\geq\int_{\mathcal{O}}|\nabla(\x(x)-\y(x))|^2\d x+\int_{\mathcal{O}}|\nabla\x(x)|^{p-2}|\nabla(\x(x)-\y(x))|^2\d x.
		\end{align*}
		Thus, from \eqref{ae1}, we deduce that 
		\begin{align}\label{2.31}
			&\mu 	\left<\mathcal{A}(\x)-\mathcal{A}(\y),\x-\y\right>\nonumber\\&\geq\frac{\mu}{2}\|\nabla(\x-\y)\|_{\H}^2+\frac{\mu}{4}\||\nabla\x|^{\frac{p-2}{2}}\nabla(\x-\y)\|_{\H}^2+\frac{\mu}{4}\||\nabla\y|^{\frac{p-2}{2}}\nabla(\x-\y)\|_{\H}^2.
		\end{align}
		Since $(a+b)^q\leq 2^{q-1}(a^q+b^q)$, for all $a,b\geq 0$ and $1\leq q<\infty$, and $(a+b)^q\leq (a^q+b^q)$,  for all $a,b\geq 0$ and $0\leq q\leq 1$,  we find 
		\begin{align}\label{2.32}
			\|\x-\y\|_{\V_p}^p&=\int_{\mathcal{O}}|\nabla(\x(x)-\y(x))|^{p-2}|\nabla(\x(x)-\y(x))|^{2}\d x\nonumber\\&\leq 2^{p-3}\int_{\mathcal{O}}|\nabla\x(x)|^{p-2}|\nabla\x(x)-\nabla\y(x)|^2\d x\nonumber\\&\quad+2^{p-3}\int_{\mathcal{O}}|\nabla\y(x)|^{p-2}|\nabla\x(x)-\nabla\y(x)|^2\d x\nonumber\\&= 2^{p-3}[\||\nabla\x|^{\frac{p-2}{2}}\nabla(\x-\y)\|_{\H}^2+\||\nabla\y|^{\frac{p-2}{2}}\nabla(\x-\y)\|_{\H}^2],
		\end{align}
		for $3\leq p<\infty$, where one has to replace $2^{p-3}$ by $1$ for the case $2<p<3$. Thus, from \eqref{2.31}, we finally have 
		\begin{align}\label{233} 
			&\mu 	\left<\mathcal{A}(\x)-\mathcal{A}(\y),\x-\y\right>\geq\frac{\mu}{2}\|\x-\y\|_{\V}^2+\frac{\mu}{2^{p-1}}\|\x-\y\|_{\V_p}^p,
		\end{align}
		for all $\x,\y\in\V_p$, $3\leq p<\infty$. For $2< p< 3$, we have to replace $ \frac{\mu}{2^{p-1}}$ by $\frac{\mu}{4}$. 
	\end{remark}

	\begin{lemma}\label{lem2.5}
		Let {$p>\frac{d}{2}, \ r\geq 2$} and $\x,\y\in\V_p\cap\wi\L^{r}$. Then,	for the operator $\mathscr{H}(\x)=\mu\mathcal{A}(\x)+\B(\x)+\beta\mathcal{C}(\x),$ we  have 
		\begin{align}\label{mon1}
			\langle\mathscr{H}(\x)-\mathscr{H}(\y),\x-\y\rangle+\wi\eta N^{\frac{2p}{2p-d}}\|\x-\y\|_{\H}^2\geq 0,
		\end{align}
		for all $\y\in 
		\widehat{\mathbb{B}}_N:=\big\{\z\in\V_p:\|\z\|_{\V_p}\leq N\big\},
		$	where \begin{align}\label{eta1}
			\wi\eta=C^{\frac{2p}{2p-d}}\left(\frac{2p-d}{2p}\right)\left(\frac{d}{\mu p}\right)^{\frac{d}{2p-d}},
		\end{align} 
		and $C$ is the constant appearing in the Gagliardo-Nirenberg inequality. 
	\end{lemma}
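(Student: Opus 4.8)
The plan is to split the operator as $\G = -\mu\mathcal{A}+\B+\beta\mathcal{C}$ and to estimate the three contributions to $\langle\G(\u)-\G(\v),\u-\v\rangle$ separately, absorbing the indefinite convective term into the coercivity supplied by $\mathcal{A}$. For the diffusion part, the estimate \eqref{ae} already gives $-\mu\langle\mathcal{A}(\u)-\mathcal{A}(\v),\u-\v\rangle\geq\mu\|\nabla(\u-\v)\|_{\H}^2$, and for the damping part the monotonicity \eqref{2.23} yields $\beta\langle\mathcal{C}(\u)-\mathcal{C}(\v),\u-\v\rangle\geq 0$. Hence the only term requiring work is the bilinear one.

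For the bilinear term I would first use the antisymmetry $b(\u,\u-\v,\u-\v)=0$ to rewrite $\langle\B(\u)-\B(\v),\u-\v\rangle=-\langle\B(\u-\v,\u-\v),\v\rangle$, so that, setting $\w:=\u-\v$, we have $|\langle\B(\u)-\B(\v),\u-\v\rangle|=|b(\w,\v,\w)|$. A H\"older estimate with the conjugate pair $(p,\frac{p}{p-1})$ then gives $|b(\w,\v,\w)|\leq\|\nabla\v\|_{\wi\L^p}\|\w\|_{\wi\L^{\frac{2p}{p-1}}}^2\leq N\|\w\|_{\wi\L^{\frac{2p}{p-1}}}^2$, since $\v\in\widehat{\mathbb{B}}_N$. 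The crucial step is to interpolate the middle norm by the Gagliardo--Nirenberg inequality: $\|\w\|_{\wi\L^{\frac{2p}{p-1}}}^2\leq C\|\nabla\w\|_{\H}^{\frac{d}{p}}\|\w\|_{\H}^{2-\frac{d}{p}}$, with interpolation index $\theta=\frac{d}{2p}$. This is exactly where the hypothesis $p>\frac{d}{2}$ is used, since it guarantees $\theta\in(0,1)$ and that the exponent $\frac{2p}{p-1}$ lies within the admissible Sobolev range. Combining these, $|\langle\B(\u)-\B(\v),\u-\v\rangle|\leq CN\|\nabla\w\|_{\H}^{\frac{d}{p}}\|\w\|_{\H}^{2-\frac{d}{p}}$.

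Finally I would split this product by Young's inequality with the conjugate exponents $\frac{2p}{d}$ and $\frac{2p}{2p-d}$ (they are conjugate, as $\frac{d}{2p}+\frac{2p-d}{2p}=1$, and the two factors are raised to powers $\frac{d}{p}\cdot\frac{2p}{d}=2$ and $\bigl(2-\frac{d}{p}\bigr)\cdot\frac{2p}{2p-d}=2$, respectively). Tuning the Young parameter so that the resulting gradient term carries coefficient exactly $\frac{\mu}{2}$ forces the parameter to be $\bigl(\frac{\mu p}{d}\bigr)^{\frac{d}{2p}}$, which produces the companion coefficient $\wi\eta N^{\frac{2p}{2p-d}}$ with $\wi\eta$ precisely as in \eqref{eta1}. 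Thus $CN\|\nabla\w\|_{\H}^{\frac{d}{p}}\|\w\|_{\H}^{2-\frac{d}{p}}\leq\frac{\mu}{2}\|\nabla\w\|_{\H}^2+\wi\eta N^{\frac{2p}{2p-d}}\|\w\|_{\H}^2$.

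Assembling the three bounds yields $\langle\G(\u)-\G(\v),\u-\v\rangle\geq\frac{\mu}{2}\|\nabla(\u-\v)\|_{\H}^2-\wi\eta N^{\frac{2p}{2p-d}}\|\u-\v\|_{\H}^2$, and discarding the nonnegative gradient term gives \eqref{mon1}. The only genuine subtlety is the interpolation-and-Young bookkeeping: one must verify the conjugacy of the exponents and select the free parameter so that it cancels against the half of the coercive constant $\mu$ that is retained, leaving the other half to be absorbed and the $\|\u-\v\|_{\H}^2$ term with exactly the stated constant.
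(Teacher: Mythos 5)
Your proposal is correct and follows essentially the same route as the paper: the coercivity \eqref{ae} for $\mathcal{A}$, the monotonicity \eqref{2.23} for $\mathcal{C}$, and then H\"older with the pair $(p,\tfrac{2p}{p-1},\tfrac{2p}{p-1})$, Gagliardo--Nirenberg interpolation, and Young's inequality with exponents $\tfrac{2p}{d}$ and $\tfrac{2p}{2p-d}$ to absorb the convective term, yielding exactly the constant $\wi\eta$ of \eqref{eta1}. The bookkeeping you describe matches the paper's estimate \eqref{225} step for step.
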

	
	\begin{proof}
		We estimate $|\langle\B(\x-\y,\x-\y),\y\rangle|$  using H\"older's, Gagliardo-Nirenberg's and Young's  inequalities as 
		\begin{align}\label{225}
			|\langle\B(\x-\y,\y),\x-\y\rangle|&\leq\|\nabla\y\|_{\wi\L^p}\|\x-\y\|_{\wi\L^{\frac{2p}{p-1}}}^2\nonumber\\&\leq C\|\y\|_{\V_p}\|\x-\y\|_{\V}^{\frac{d}{p}}\|\x-\y\|_{\H}^{\frac{2p-d}{p}}\nonumber\\&\leq \frac{\mu}{2}\|\x-\y\|_{\V}^2+C^{\frac{2p}{2p-d}}\left(\frac{2p-d}{2p}\right)\left(\frac{d}{\mu p}\right)^{\frac{d}{2p-d}}\|\y\|_{\V_p}^{\frac{2p}{2p-d}}\|\x-\y\|_{\H}^2. 
		\end{align}
		Combining \eqref{ae}, \eqref{2.23} and \eqref{225}, we deduce 
		\begin{align*}
			\langle\mathscr{H}(\x)-\mathscr{H}(\y),\x-\y\rangle+C^{\frac{2p}{2p-d}}\left(\frac{2p-d}{2p}\right)\left(\frac{d}{\mu p}\right)^{\frac{d}{2p-d}}\|\y\|_{\V_p}^{\frac{2p}{2p-d}}\|\x-\y\|_{\H}^2\geq\frac{\mu }{2}\|\x-\y\|_{\V}^2\geq 0.
		\end{align*}
		Let $\widehat{\mathbb{B}}_N$ be an $\V_p$-ball of radius $N$, that is,
		$
		\widehat{\mathbb{B}}_N:=\big\{\z\in\V_p:\|\z\|_{\V_p}\leq N\big\}.
		$ Thus	for all $\y\in\widehat{\mathbb{B}}_N$,  we have 
		\begin{align}
			\langle\mathscr{H}(\x)-\mathscr{H}(\y),\x-\y\rangle+C^{\frac{2p}{2p-d}}\left(\frac{2p-d}{2p}\right)\left(\frac{d}{\mu p}\right)^{\frac{d}{2p-d}}N^{\frac{2p}{2p-d}}\|\x-\y\|_{\H}^2\geq 0,
		\end{align}
		and hence  the operator $\mathscr{H}(\cdot)$ is locally monotone.
	\end{proof}

	\begin{remark}
		1. 	It should be noted that if $\y\in\mathrm{L}^p(0,T;\V_p)$, then 
		\begin{align}\label{2p33}
			\int_0^T\|\y(t)\|_{\V_p}^{\frac{2p}{2p-d}}\d t\leq T^{\frac{2p-(d+2)}{2p-d}} \left(\int_0^T\|\y(t)\|_{\V_p}^p\d t\right)^{\frac{2}{2p-d}}<+\infty,
		\end{align} 
		for $p\geq\frac{d}{2}+1$. 
		
		2. Using  Gagliardo-Nirenberg's and H\"older's inequalities, we find 
		\begin{align*}
			\int_0^T\|\y(t)\|_{\wi\L^r}^{r}\d t&\leq C\int_0^T\|\nabla\y(t)\|_{\wi\L^p}^{\frac{pd(r-2)}{2p+pd-2d}}\|\y(t)\|_{\H}^{\frac{2(pr+pd-dr)}{2p+pd-2d}}\d t\nonumber\\&\leq C T^{\frac{2p+pd-dr}{2p+pd-2d}}\sup_{t\in[0,T]}\|\y(t)\|_{\H}^{\frac{2(pr+pd-dr)}{2p+pd-2d}}\left(\int_0^T\|\y(t)\|_{\V_p}^p\d t\right)^{\frac{d(r-2)}{2p+pd-2d}}<+\infty,
		\end{align*}
		for $2\leq r\leq p\left(\frac{2}{d}+1\right)$, so that we have the following embedding:
		\begin{align*}
			\mathrm{L}^{\infty}(0,T;\H)\cap\mathrm{L}^p(0,T;\V_p)\subset\mathrm{L}^{r}(0,T;\wi\L^{r}). 
		\end{align*}
	\end{remark}

	\begin{remark}
		1. One can estimate $|\langle\B(\x-\y,\x-\y),\y\rangle|$  in the following way also:
		\begin{align*}
			|\langle\B(\x-\y,\x-\y),\y\rangle|&\leq\|\x-\y\|_{\V}\|\x-\y\|_{\H}\|\y\|_{\wi\L^{\infty}}\nonumber\\&\leq\frac{\mu}{2}\|\x-\y\|_{\V}^2+\frac{1}{2\mu}\|\y\|_{\wi\L^{\infty}}^2\|\x-\y\|_{\H}^2\nonumber\\&\leq \frac{\mu}{2}\|\x-\y\|_{\V}^2+\frac{1}{2\mu}\|\y\|_{\V_p}^2\|\x-\y\|_{\H}^2,
		\end{align*}
		for $p>d$. 
		
		2. Using H\"older's, Gagliardo-Nirenberg's and Young's inequalities, one can estimate $|\langle\B(\x-\y,\x-\y),\y\rangle|$  as
		\begin{align}
			|\langle\B(\x-\y,\x-\y),\y\rangle|&\leq\|\x-\y\|_{\V}\|\y\|_{\wi\L^{\frac{dp}{d-p}}}\|\x-\y\|_{\wi\L^{\frac{2pd}{pd-2d+2p}}}\nonumber\\&\leq\|\x-\y\|_{\V}^{\frac{d}{p}}\|\y\|_{\V_p}\|\x-\y\|_{\H}^{\frac{2p-d}{p}}\nonumber\\&\leq\frac{\mu}{2}\|\x-\y\|_{\V}^2+\frac{C}{\mu}\|\y\|_{\V_p}^{\frac{2p}{2p-d}}\|\x-\y\|_{\H}^2,
		\end{align}
		provided $\frac{d}{2}\leq p< d$. From \eqref{2p33}, we infer that $	\int_0^T\|\y(t)\|_{\V_p}^{\frac{2p}{2p-d}}\d t<\infty$, for $1+\frac{d}{2}\leq p< d$. For more estimates on the bilinear operator, the interested readers are referred to see \cite{JMJN}. 
	\end{remark}

	\begin{lemma}\label{lem2.8}
		The operator $\mathscr{H}:\V_p\cap\widetilde{\L}^{r}\to \V_p'+\widetilde{\L}^{\frac{r}{r-1}}$ is demicontinuous. 
	\end{lemma}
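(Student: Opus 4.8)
The plan is to obtain demicontinuity as a consequence of the stronger property that $\G$ is \emph{strongly continuous} from $\V_p\cap\wi\L^{r}$ into $\V_p'+\wi\L^{\frac{r}{r-1}}$: once $\G(\u_k)\to\G(\u)$ strongly, weak convergence is immediate, and weak convergence of the images is exactly what the definition of demicontinuity demands. The whole argument then reduces to assembling, inside the single target space $\V_p'+\wi\L^{\frac{r}{r-1}}$, the three local-Lipschitz continuity statements for $\mathcal{A}$, $\B$ and $\mathcal{C}$ that have already been established on their natural domains.

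Accordingly, I would take an arbitrary sequence $\u_k\to\u$ in $\V_p\cap\wi\L^{r}$. By the definition of the intersection norm $(\|\cdot\|_{\V_p}^2+\|\cdot\|_{\wi\L^{r}}^2)^{1/2}$ this forces $\u_k\to\u$ both in $\V_p$ and in $\wi\L^{r}$, and in particular $\{\u_k\}$ is bounded. I would then handle the three terms separately: the local Lipschitz bound \eqref{2.3} gives $\mathcal{A}(\u_k)\to\mathcal{A}(\u)$ in $\V_p'$; the bilinear estimate showing $\B:\V_p\to\V_p'$ is locally Lipschitz for $p\geq\frac{3d}{d+2}$ (a range that contains $p\geq 2$, since $2\leq d\leq 4$ gives $\frac{3d}{d+2}\leq 2$) gives $\B(\u_k)\to\B(\u)$ in $\V_p'$; and \eqref{213} gives $\mathcal{C}(\u_k)\to\mathcal{C}(\u)$ in $\wi\L^{\frac{r}{r-1}}$.

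To conclude, I would use the continuous embeddings $\V_p'\hookrightarrow\V_p'+\wi\L^{\frac{r}{r-1}}$ and $\wi\L^{\frac{r}{r-1}}\hookrightarrow\V_p'+\wi\L^{\frac{r}{r-1}}$, which follow at once from the definition of the sum-space norm. Viewed through these embeddings, each of the three convergences above holds in $\V_p'+\wi\L^{\frac{r}{r-1}}$, so $\G(\u_k)=-\mu\mathcal{A}(\u_k)+\B(\u_k)+\beta\mathcal{C}(\u_k)$ converges \emph{strongly} to $\G(\u)$ there, hence weakly. This is precisely the assertion $\u_k\to\u\Rightarrow\G(\u_k)\xrightarrow{w}\G(\u)$, i.e. demicontinuity.

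The only point requiring care, and the one I expect to be the main obstacle, is the uniform control of the prefactors in these Lipschitz bounds along the bounded sequence $\{\u_k\}$. Each prefactor is a continuous polynomial function of the $\V_p$- or $\wi\L^{r}$-norms of its arguments and is therefore bounded uniformly in $k$; the single delicate feature is the factor carrying the exponent $p-4$ in \eqref{2.3}, which for $2\leq p<4$ is not manifestly bounded near the origin. Since demicontinuity only needs continuity of $\mathcal{A}$ (not a quantitative Lipschitz constant), I would circumvent this by establishing continuity of $\mathcal{A}:\V_p\to\V_p'$ directly, via a Nemytskii/dominated-convergence argument applied to the flux $(1+|\nabla\u|^2)^{\frac{p-2}{2}}\nabla\u$, whose growth is of order $|\nabla\u|^{p-1}\in\wi\L^{\frac{p}{p-1}}$; this works for every $p\geq 2$ and disposes of the degenerate-exponent bookkeeping.
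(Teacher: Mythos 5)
Your proof is correct and follows essentially the same route as the paper: the paper likewise splits $\G$ into $\mathcal{A}$, $\B$ and $\mathcal{C}$ and applies \eqref{2.3}, the Sobolev bound for $\B$, and \eqref{213} term by term, the only difference being that it verifies $\langle\G(\u^n)-\G(\u),\v\rangle\to 0$ for each fixed test function $\v$ rather than packaging the same bounds as norm convergence in $\V_p'+\widetilde{\L}^{\frac{r}{r-1}}$. Your remark about the exponent $p-4$ in \eqref{2.3} for $2\le p<4$ identifies a point of care that the paper's own proof silently inherits, and your dominated-convergence fallback for the continuity of $\mathcal{A}$ is a sound way to handle it.
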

	\begin{proof}
		Let us take a sequence $\x^n\to \x$ in $\V_p\cap\widetilde{\L}^{r}$, that is, $\|\x^n-\x\|_{\V_p}+\|\x^n-\x\|_{\wi\L^r}\to 0,$ as $n\to\infty$. For any $\y\in\V_p\cap\widetilde{\L}^{r}$, we consider 
		\begin{align}\label{214}\nonumber
			&	\langle\mathscr{H}(\x^n)-\mathscr{H}(\x),\y\rangle\\ &=\mu \langle \mathcal{A}(\x^n)-\mathcal{A}(\x),\y\rangle+\langle\B(\x^n)-\B(\x),\y\rangle+\beta\langle \mathcal{C}(\x^n)-\mathcal{C}(\x),\y\rangle.
		\end{align} 
		Next, we consider $\langle \mathcal{A}(\x^n)-\mathcal{A}(\x),\y\rangle$ from \eqref{214} and estimate it using \eqref{2.3} as 
		\begin{align}
			&|\langle\mathcal{A}(\x^n)-\mathcal{A}(\x),\y\rangle|\nonumber\\&=|\langle(1+|\nabla\x^n|^2)^{\frac{p-2}{2}}\nabla\x^n-\langle(1+|\nabla\x|^2)^{\frac{p-2}{2}}\nabla\x,\nabla\y\rangle|\nonumber\\&\leq \bigg\{2^{\frac{p-2}{2}}\left(1+\|\nabla\x^n\|_{\wi\L^p}\right)\nonumber\\&\qquad+(p-2)2^{\frac{p-4}{2}}\left[\left(1+\|\nabla\x^n\|_{\wi\L^p}^{p-4}+\|\nabla\x\|_{\wi\L^p}^{p-4}\right)\left(\|\nabla\x^n\|_{\wi\L^p}+\|\nabla\x\|_{\wi\L^p}\right)\|\nabla\x\|_{\wi\L^p}\right]\bigg\}\nonumber\\&\quad\times\|\nabla(\x^n-\x)\|_{\wi\L^p}\|\nabla\y\|_{\wi\L^p}\nonumber\\&\to 0, \ \text{ as } \ n\to\infty, 
		\end{align}
		since $\x^n\to \x$ in $\V_p$. We estimate the term $\langle\B(\x^n)-\B(\x),\y\rangle$ from \eqref{214} using H\"older's and Sobolev's inequalities as 
		\begin{align}
			|\langle\B(\x^n)-\B(\x),\y\rangle|&
			\leq|\langle\B(\x^n,\y),\x^n-\x\rangle|+|\langle\B(\x^n-\x,\y),\x\rangle|\nonumber\\&\leq C\left(\|\x^n\|_{\V}+\|\x\|_{\V}\right)\|\x^n-\x\|_{\V}\|\y\|_{\V}\nonumber\\& \to 0, \ \text{ as } \ n\to\infty, 
		\end{align}
		since $\x^n\to\x$ in $\V$ and $\x^n,\x\in\V$. Finally, we estimate the term $\langle \mathcal{C}(\x^n)-\mathcal{C}(\x),\y\rangle$ from \eqref{214} using Taylor's formula and H\"older's inequality as 
		\begin{align}
			|\langle \mathcal{C}(\x^n)-\mathcal{C}(\x),\y\rangle|&\leq (r-1)\|\x^n-\x\|_{\widetilde{\L}^{r}}\left(\|\x^n\|_{\widetilde{\L}^{r}}+\|\x\|_{\widetilde{\L}^{r}}\right)^{r-2}\|\y\|_{\widetilde{\L}^{r}}\nonumber\\&\to 0, \ \text{ as } \ n\to\infty, 
		\end{align}
		since $\x^n\to\x$ in $\widetilde{\L}^{r}$ and $\x^n,\x\in\V\cap\widetilde{\L}^{r}$. From the above convergences, it is immediate that $\langle\mathscr{H}(\x^n)-\mathscr{H}(\x),\y\rangle \to 0$, for all $\y\in \V_p\cap\widetilde{\L}^{r}$.
		Hence the operator $\mathscr{H}:\V_p\cap\widetilde{\L}^{r}\to \V_p'+\widetilde{\L}^{\frac{r}{r-1}}$ is demicontinuous, which implies that the operator $\mathscr{H}(\cdot)$ is  hemicontinuous. 
	\end{proof}

	\section{Stochastic Ladyzhenskaya-Smagorinsky  equations with damping}\label{sec3}\setcounter{equation}{0} In this section, we consider the stochastic  Ladyzhenskaya-Smagorinsky   equations with damping perturbed by multiplicative Gaussian noise and discuss global solvability results.  Let us first  state the assumptions on the noise coefficient, so that we obtain the existence and uniqueness of strong solution to the system \eqref{31}.

	\begin{hypothesis}\label{hyp}
		The noise coefficients $\boldsymbol{\sigma}_k(\cdot,\cdot)$,\ $k\geq 1$ satisfy the following: 
		\begin{itemize}
			\item[(H.1)]  (Growth condition).	There exists a positive	constant $K$ such that for all $t\in[0,T]$ and $\x\in\H$,
			\begin{equation*}
				\sum_{k=1}^{\infty}\|\boldsymbol{\sigma}_k(t, \x)\|^{2}_{\H} 	\leq K\left(1 +\|\x\|_{\H}^{2}\right);
			\end{equation*}
			
			\item[(H.2)]  (Lipschitz condition).	There exists a positive constant $L$ such that for any $t\in[0,T]$ and all $\x_1,\x_2\in\H$,
			\begin{align}\label{H2.2}
				\sum_{k=1}^{\infty}	\|\boldsymbol{\sigma}_k(t,\x_1) - \boldsymbol{\sigma}_k(t,\x_2)\|^2_{\H}\leq L\|\x_1 -	\x_2\|_{\H}^2.
			\end{align}
		\end{itemize}
	\end{hypothesis}	
	Note that the condition (H.1) implies that for every $\x\in\H$, the linear map $\boldsymbol{\sigma}(\cdot,\x):=\{\boldsymbol{\sigma}_k(\cdot,\x)\}_{k\in\mathbb{N}}:\ell^2\to\H$ defined by \begin{align}\label{3p2}
		\boldsymbol{\sigma}(\cdot,\x)h:=\sum_{k=1}^{\infty}\boldsymbol{\sigma}_k(\cdot,\x)h_k,\ h=\{h_k\}_{k\in\mathbb{N}}
	\end{align}
	is in $\mathcal{L}_2(\ell^2;\H)$, that is, $\boldsymbol{\sigma}(\cdot,\x)$ is a Hilbert-Schmidt operator from $\ell^2$ to $\H$. For the orthonormal basis $\{e_k\}_{k\in\N}\in\ell^2$ with $e_k=(0,\ldots,1,\ldots)$, we have 
	\begin{align}\label{3p3}
		\|\boldsymbol{\sigma}(t,\x)\|_{\mathcal{L}_2(\ell^2;\H)}^2=\sum_{k=1}^{\infty}\|\boldsymbol{\sigma}_k(t,\x)e_k\|_{\H}^2=\sum_{k=1}^{\infty}\|\boldsymbol{\sigma}_k(t,\x)\|_{\H}^2\leq K(1+\|\x\|_{\H}^2)<\infty,
	\end{align}
	for all $t\in[0,T]$.

	Taking the Helmholtz-Hodge orthogonal   projection $\mathcal{P}$ onto the system \eqref{31}, we obtain 
	\begin{equation}\label{32}
		\left\{
		\begin{aligned}
			\d\u(t)+[\mu \mathcal{A}(\u(t))+\B(\u(t))+\beta\mathcal{C}(\u(t))]\d t&=\f(t)\d t+\sum_{k=1}^{\infty}\boldsymbol{\sigma}_k(t,\u(t))\d\boldsymbol{W}_k(t),\\
			\u(0)&=\u_0\in\H,
		\end{aligned}
		\right.
	\end{equation}
	for $t\in(0,T)$. For simplicity, we have  used the notation $\f$ for $\mathcal{P}\f$ and $\boldsymbol{\sigma}_k$ for $\mathcal{P}\boldsymbol{\sigma}_k$.

	Let us now provide the definition of a unique global strong solution in the probabilistic sense to the system (\ref{32}).
	\begin{definition}[Global strong solution]
		Let $\u_0\in\H$, $\f\in\mathrm{L}^{p'}(0,T;\V_p')$ and {$\xi>0$} be given. An $\H$-valued $\{\mathscr{F}_t\}_{t\geq 0}$-adapted stochastic process $\u(\cdot)$ is called a \emph{strong solution} to the system (                                         \ref{32}) if the following conditions are satisfied:  
		\begin{enumerate}
			\item [(i)] for $ q\in\Big\{2,\frac{2(2p-d)}{p-1}, {4+\xi}\Big\}$, the process $\u\in{\mathrm{L}^{q}(\Omega;\mathrm{L}^{\infty}(0,T;\H))\cap\mathrm{L}^p(\Omega;\mathrm{L}^p(0,T;\V_p))}\break\cap\mathrm{L}^{r}(\Omega;\mathrm{L}^{r}(0,T;\widetilde{\L}^{r}))$ and $\u(\cdot)$ has a $\V_p\cap\widetilde{\L}^{r}$-valued  modification, which is progressively measurable with continuous paths in $\H$ and $\u\in\C([0,T];\H)\cap{\mathrm{L}^p(0,T;\V_p)}\cap\mathrm{L}^{r}(0,T;\widetilde{\L}^{r})$, $\mathbb{P}$-a.s.;
			\item [(ii)] the following equality holds for every $t\in [0, T ]$, as an element of $\V_p'+\wi\L^{\frac{r}{r-1}},$ $\mathbb{P}$-a.s.,
			\begin{align}\label{4.4}\nonumber
				\u(t)&=\x-\int_0^t\left[\mu \mathcal{A}(\u(s))+\B(\u(s))+\beta\mathcal{C}(\u(s))-\f(s)\right]\d s\\&\quad +\sum_{k=1}^{\infty}\int_0^t\boldsymbol{\sigma}_k( s,\u(s))\d\boldsymbol{W}_k(s);
			\end{align}
			\item [(iii)] the following It\^o formula holds true, for all $t\in[0,T]$, $\mathbb{P}$-a.s.,
			\begin{align}\label{a34}
				&	\|\u(t)\|_{\H}^2+2\mu \int_0^t\|(1+|\nabla\u(s)|^2)^{\frac{p-2}{4}}\nabla\u(s)\|_{\H}^2\d s+2\beta\int_0^t\|\u(s)\|_{\widetilde{\L}^{r}}^{r}\d s\nonumber\\&=\|\x\|_{\H}^2+2\int_0^t\langle \f(s),\u(s)\rangle\d s+\sum_{k=1}^{\infty}\int_0^t\|\boldsymbol{\sigma}_k(s,\u(s))\|_{\H}^2\d s\nonumber\\&\quad+2\sum_{k=1}^{\infty}\int_0^t(\boldsymbol{\sigma}_k( s,\u(s)),\u(s))\d\boldsymbol{W}_k(s).
			\end{align}
			
		\end{enumerate}
	\end{definition}
	An alternative version of the condition (\ref{4.4}) is to require that for every $t\in[0,T]$, and for any  $\v\in\V_p\cap\widetilde{\L}^{r}$, $\PP$-a.s.,
	\begin{align}\label{4.5}
		(\u(t),\v)&=(\x,\v)-\int_0^t\langle\mu \mathcal{A}(\u(s))+\B(\u(s))+\beta\mathcal{C}(\u(s))-\f(s),\v\rangle\d s\no\\&\quad+\sum_{k=1}^{\infty}\int_0^t\left(\boldsymbol{\sigma}_k(s,\u(s))\d\boldsymbol{W}_k(s),\v\right).
	\end{align}	
	\begin{definition}
		A strong solution $\u(\cdot)$ to (\ref{32}) is called a
		\emph{ pathwise unique    strong solution} if
		$\widetilde{\u}(\cdot)$ is an another strong
		solution, then $$\mathbb{P}\big\{\omega\in\Omega:\u(t)=\widetilde{\u}(t),\ \text{ for all }\ t\in[0,T]\big\}=1.$$ 
	\end{definition}

	\begin{theorem}\label{exis2}
		Let $\u_0\in\H$, $\f\in\mathrm{L}^{p'}(0,T;\V_p')$ and {$\xi>0$} be given.  Then for {$p\geq\frac{d}{2}+1$, $r\geq 2$} or $p\geq 2 $, $r\geq 4$ ($2\beta\mu\geq 1$ for $r=4$), under Hypothesis \ref{hyp},  there exists a \emph{pathwise unique strong solution}
		$\u(\cdot)$ to the system (\ref{32}) such that 
		\begin{align*}
			\u&\in\mathrm{L}^{q}(\Omega;\mathrm{L}^{\infty}(0,T;\H))\cap{\mathrm{L}^{p}(\Omega;\mathrm{L}^p  (0,T;\V_p))}\cap\mathrm{L}^{r}(\Omega;\mathrm{L}^{r}(0,T;\widetilde{\L}^{r})),
		\end{align*} 
		for  $q\in\left\{2, \frac{2(2p-d)}{p-1},{4+\xi}\right\} $, with $\mathbb{P}$-a.s., continuous trajectories in $\H$.
	\end{theorem}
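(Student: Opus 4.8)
The plan is to run the classical monotonicity scheme for SPDEs: a finite-dimensional Galerkin approximation, uniform a priori estimates, extraction of weak limits, and a \emph{stochastic Minty--Browder} identification of the limiting nonlinearity, all built on the operator properties of Lemmas \ref{lem2.2}, \ref{lem2.5}, \ref{lem2.8} and the growth/Lipschitz bounds (H.1)--(H.2) of Hypothesis \ref{hyp}.

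\textbf{Step 1 (Galerkin system and a priori bounds).} First I would fix the orthonormal basis $\{e_k\}_{k\ge1}\subset\V_p\cap\widetilde{\L}^{r}$ of $\H$ made of Stokes eigenfunctions, set $\H_n=\mathrm{span}\{e_1,\dots,e_n\}$ with orthogonal projection $\mathrm{P}_n$, and solve on $\H_n$ the Itô SDE obtained by projecting \eqref{32}. Its coefficients are locally Lipschitz by \eqref{2.3}, \eqref{lip}, \eqref{213} and (H.2), so a unique local solution $\u^n$ exists, and the estimates below make it global. Applying the finite-dimensional Itô formula to $\|\u^n(t)\|_\H^2$ and invoking the coercivity $\langle\mathcal A(\u),\u\rangle\ge\tfrac12[\|\nabla\u\|_\H^2+\|\nabla\u\|_{\widetilde{\L}^p}^p]$, together with $\langle\B(\u),\u\rangle=0$, $\langle\mathcal C(\u),\u\rangle=\|\u\|_{\widetilde{\L}^r}^r$, the growth condition (H.1), the Burkholder--Davis--Gundy inequality and Gronwall's lemma, I expect uniform bounds in $\mathrm{L}^{2}(\Omega;\mathrm{L}^\infty(0,T;\H))\cap\mathrm{L}^{2}(\Omega;\mathrm{L}^2(0,T;\V_p))\cap\mathrm{L}^2(\Omega;\mathrm{L}^{r}(0,T;\widetilde{\L}^r))$. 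Applying Itô to $(\|\u^n(t)\|_\H^2)^{p}$ upgrades the first bound to the required $\mathrm{L}^{2p}(\Omega;\mathrm{L}^\infty(0,T;\H))$ estimate, and via $\|\mathcal A(\u)\|_{\V_p'}\le C(1+\|\u\|_{\V_p}^{p-2})\|\u\|_{\V_p}$, \eqref{2.9a} and $\|\mathcal C(\u)\|_{\widetilde{\L}^{r/(r-1)}}=\|\u\|_{\widetilde{\L}^r}^{r-1}$ it also bounds the drift in $\mathrm{L}^{p/(p-1)}(\Omega\times(0,T);\V_p'+\widetilde{\L}^{r/(r-1)})$.

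\textbf{Step 2 (Weak limits).} Since $\mathrm{L}^2(0,T;\V_p)$, $\mathrm{L}^{r}(0,T;\widetilde{\L}^r)$ and the ambient $\mathrm{L}^q(\Omega;\cdot)$ spaces are reflexive, Banach--Alaoglu yields a relabelled subsequence with $\u^n\xrightarrow{w^*}\u$ in $\mathrm{L}^{2p}(\Omega;\mathrm{L}^\infty(0,T;\H))$, $\u^n\xrightarrow{w}\u$ in $\mathrm{L}^{p}(\Omega;\mathrm{L}^2(0,T;\V_p))\cap\mathrm{L}^{r}(\Omega;\mathrm{L}^{r}(0,T;\widetilde{\L}^r))$, the drift $\G(\u^n)\xrightarrow{w}\G_0$ in the corresponding dual space, and $\boldsymbol\sigma(\cdot,\u^n)\xrightarrow{w}\mathcal S$ in $\mathrm{L}^2(\Omega\times(0,T);\mathcal L_2(\ell_2;\H))$ (the latter by (H.1)), where $\G(\u)=-\mu\mathcal A(\u)+\B(\u)+\beta\mathcal C(\u)$ as in Lemma \ref{lem2.2}. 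Passing to the limit in the projected equation, $\u$ satisfies \eqref{4.4} with $\G_0$ and $\mathcal S$ in place of the drift and the diffusion, and the associated Itô energy identity for $\|\u(t)\|_\H^2$.

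\textbf{Step 3 (Identification of the limits; the main obstacle).} This is the crux, and the difficulty is that Lemma \ref{lem2.5} gives only \emph{local} monotonicity in the regime $p\ge\frac d2+1,\ r\ge2$. For an arbitrary adapted test process $\v$ I would introduce the random weight $\rho(t)=\exp\!\big(-\int_0^t\kappa(s)\,\d s\big)$ with $\kappa(s)=2\widetilde\eta\,\|\v(s)\|_{\V_p}^{2p/(2p-d)}+2L$, $\widetilde\eta$ as in \eqref{eta1}; by Remark estimate \eqref{2p33} one has $\int_0^T\kappa\,\d s<\infty$ $\mathbb P$-a.s., and the $\mathrm{L}^{2p}$-moment bound of Step 1 guarantees the integrability of $\rho\|\u^n\|_\H^2$ needed to take expectations. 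Applying Itô to $\rho(t)\|\u^n(t)\|_\H^2$ and to $\rho(t)\|\u(t)\|_\H^2$, taking expectations (the martingale terms vanish), using weak lower semicontinuity of the $\H$-norm for the terminal term, and absorbing the noise difference through (H.2) into the constant $\kappa$, I expect to arrive at $\E\int_0^T\rho(t)\,\langle\G(\v(t))-\G_0(t),\v(t)-\u(t)\rangle\,\d t\ge0$ for every $\v$. Setting $\v=\u+\lambda\w$, dividing by $\lambda$, letting $\lambda\downarrow0$ and invoking the demicontinuity of $\G$ (Lemma \ref{lem2.8}) then forces $\G_0=\G(\u)$; a parallel computation identifies $\mathcal S=\boldsymbol\sigma(\cdot,\u)$. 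The principal obstacles are precisely this localization — the random exponential weight and the higher-moment bound are both indispensable for integrability after passing to expectations — and the careful bookkeeping of the stochastic integral so that the noise contribution is dominated by the Lipschitz term inside $\kappa$. In the globally monotone regime $p\ge2,\ r\ge4$ ($2\beta\mu\ge1$ at $r=4$), estimates \eqref{mon}/\eqref{2.18} let me take $\kappa$ constant (namely $2\eta$, resp.\ $2L$), so no localization is required and the same argument simplifies.

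\textbf{Step 4 (Uniqueness and path continuity).} For two solutions $\u_1,\u_2$ with $\u_1(0)=\u_2(0)$, I would apply Itô to $\|\u_1(t)-\u_2(t)\|_\H^2$; using $\langle\B(\u_1)-\B(\u_2),\u_1-\u_2\rangle=-\langle\B(\u_1-\u_2,\u_1-\u_2),\u_2\rangle$ and the local monotonicity of Lemma \ref{lem2.5} (resp.\ Lemma \ref{lem2.2}) together with (H.2), I obtain $\d\|\u_1-\u_2\|_\H^2\le\kappa(t)\,\|\u_1-\u_2\|_\H^2\,\d t+\d M_t$ with $\kappa$ as above. Removing the local martingale $M_t$ by stopping times $\tau_N$, applying the pathwise Gronwall inequality, and using the a.s.\ finiteness of $\int_0^T\kappa\,\d t$ yields $\u_1=\u_2$ in $\C([0,T];\H)$ $\mathbb P$-a.s. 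Finally, the Itô formula \eqref{a34} for $\|\u(t)\|_\H^2$, combined with $\u\in\mathrm{L}^2(0,T;\V_p\cap\widetilde{\L}^r)$ and the drift lying in the dual space, gives a modification with continuous $\H$-valued trajectories, completing the proof.
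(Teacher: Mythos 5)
Your overall scheme --- Galerkin approximation on Stokes eigenspaces, uniform moment estimates, weak limits, and a weighted (localized) stochastic Minty--Browder identification with the exponential weight built from $\kappa(s)=2\wi\eta\|\v(s)\|_{\V_p}^{2p/(2p-d)}+2L$ to absorb the merely local monotonicity of Lemma \ref{lem2.5} --- is exactly the route the paper takes (its own proof is a sketch deferring the details to \cite{MTM6,SSSP,MTM8}), and Steps 1, 2 and 4 are in order as far as they go.

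The one genuine gap is where you assert, in Steps 2--3, that the limit process satisfies ``the associated It\^o energy identity for $\|\u(t)\|_{\H}^2$.'' This is precisely the point the paper isolates as the technical obstruction for arbitrary $r$. The limit $\u$ solves an equation whose drift lives in $\V_p'+\wi\L^{\frac{r}{r-1}}$ while $\u$ itself lives in $\V_p\cap\wi\L^{r}$, and the available It\^o formulae for such intersections (Theorem 2.1 of \cite{GK2}) apply only for $2\le r\le\frac{pd}{d-p}$, i.e.\ when $\wi\L^{r}$ is controlled by $\V_p$ through the Sobolev embedding. For $r>\frac{pd}{d-p}$ --- which is permitted by the hypotheses of the theorem and is the very reason the paper cannot simply invoke the locally monotone framework of \cite{SLWL} --- the energy equality \eqref{a34} for the limit does not follow from passing to the limit in the Galerkin identities; one needs the simultaneous approximation in Sobolev and Lebesgue norms by elements of the Stokes eigenspaces constructed in \cite{CLF} (available here because $\partial\mathcal{O}\in\C^m$ with $m\ge3$ places the eigenfunctions in $\V_p\cap\wi\L^{r}$), combined with the argument of Theorem 3.7 of \cite{MTM8}. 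Without the energy equality for $\u$ your weighted comparison $\E\int_0^T\rho(t)\langle\G(\v(t))-\G_0(t),\v(t)-\u(t)\rangle\,\d t\ge0$ cannot be closed, since it is obtained by subtracting the It\^o expansion of $\rho\|\u\|_{\H}^2$ from that of $\rho\|\u^n\|_{\H}^2$. You should either restrict to $r\le\frac{pd}{d-p}$ or supply this approximation step explicitly.
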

	\noindent
	Let us first provide the methodology for the proof. 
	\begin{itemize}
		\item For $p\geq\frac{d}{2}+1$ and $r\geq 2$, since the operator $\mathscr{H}(\u)=\mu\mathcal{A}(\u)+\B(\u)+\beta\mathcal{C}(\u)$ satisfies the local monotonicity condition \eqref{mon1} as well as demicontinuity condition (see Lemmas \ref{lem2.5} and \ref{lem2.8}, respectively), we establish the  existence of a strong solution by a localized version (stochastic generalization) of  the Minty-Browder technique in Section \ref{EUSS} (for more details one can see \cite{MTM6,SSSP}, etc).
		\item  In order to establish the energy equality (It\^o's formula) \eqref{a34} for $2\leq r\leq \frac{pd}{d-p}$ ($2\leq r<\infty$ for $d=2$), one can use the recent result in \cite[Theorem 2.1]{GK2}. But for $r> \frac{pd}{d-p},$ we need a different technique to establish It\^o's formula. The authors in \cite{CLF} were able to construct functions that can approximate functions defined on smooth bounded domains by elements of eigenspaces of  the Stokes operator in such a way that the approximations are bounded and converge in both Sobolev and Lebesgue spaces simultaneously.
		\item   Note that 
		the eigenfunctions $\{\mathbf{e}_j\}_{j=1}^{\infty}$ of the  operator $\mathcal{L}$ belongs to $\mathcal{V}^s$, for $s> 2$ (see Section \ref{ACO}). In our case, one can choose $s\geq 3$, so that $\{\mathbf{e}_j\}_{j=1}^{\infty}\subset\D(\mathcal{L})\subset \V_p\cap\wi\L^r$, for $p\geq 2$ and $r\geq 2$. For the case $r> \frac{pd}{d-p}$, combining this idea along with the techniques used in \cite[Theorem 3.7]{MTM8} one can establish It\^o's formula.
		\item 	For the case $p\geq 2$ and $r\geq 4$ ($2\beta\mu\geq 1$ for $r=4$), as the operator $\mathscr{H}(\cdot)+\eta\mathrm{I}$ is globally monotone and demicontinuous (see Lemmas \ref{lem2.2} and \ref{lem2.8}, respectively), one can use the similar techniques used in Section \ref{EUSS} to obtain the well-posedness results. 
	\end{itemize}

	\section{Proof of Theorem {\ref{exis2}}}\label{EUSS}\setcounter{equation}{0}
	In this section, we discuss the existence and uniqueness of a strong solution to the system \eqref{32} under the assumption that the operator $\mathscr{H}(\cdot)=\mu\mathcal{A}(\cdot)+\B(\cdot)+\beta\mathcal{C}(\cdot)$ \big(that is, for $p\geq \frac{d}{2}+1$ and $r\geq 2$\big) is locally monotone and demicontinuous (see Lemmas \ref{lem2.5} and \ref{lem2.8}, respectively). Let us start with the energy estimates.
	\subsection{Energy estimates}
	Let $\{\be_1,\be_2,\ldots,\be_n,\dots\}$ be the orthonormal basis in $\H$ of consisting of eigenfunctions of $\mathcal{L}$. Let $\H_n$ be the $n$-dimensional subspace of $\H$. Let $\Pi_n$ denote the orthogonal projection of $\mathbb{U}'$ onto $\H$ defined in \eqref{2.2.4} (see Subsection \ref{ACO}). 
	We define $\mathcal{A}_n(\u_n)=\Pi_n\mathcal{A}(\u_n)$, $\B_n(\u_n)=\Pi_n\B(\u_n)$, $\mathcal{C}_n(\u_n)=\Pi_n\mathcal{C}(\u_n)$, and $ \f_n=\Pi_n\f$. 
	Let us consider the following system of ODEs:
	\begin{equation}\label{EE1}
		\left\{
		\begin{aligned}
			\d(\u_n(t),\v) &=-\langle \mu\mathcal{A}_n(\u_n(t))+\B_n(\u_n(t))+\beta\mathcal{C}_n(\u_n(t))-\f_n(t),\v\rangle\d t\\&\quad +\sum_{k=1}^n\big(\boldsymbol{\sigma}_k(t,\u_n(t))\d\boldsymbol{W}_k(t),\v\big),\\
			\u(0)&=\u_0^n=\Pi_n\u_0,
		\end{aligned}
		\right.
	\end{equation}for all $\v\in\H_n$. Since $\mathcal{A}_n(\cdot)$, $\B_n(\cdot)$, and $\mathcal{C}_n(\cdot)$ are locally Lipschitz (see Subsections \ref{OptA}, \ref{OptB} and \ref{OptC}, respectively), and the operator $\boldsymbol{\sigma}_k(\cdot,\cdot)$, for $k=1,\ldots,n$ are globally Lipschitz (see \eqref{H2.2}), the system \eqref{EE1} has a unique $\H_n$-valued strong solution $\u_n(\cdot)$ and $\u_n\in\mathrm{L}^2(\Omega;\mathrm{L}^\infty(0,T^*;\H_n))$ with $\mathbb{P}$-a.s. continuous sample paths, that is, $\u_n\in\mathrm{C}([0,T^*];\H_n)$.  Now, we discuss the a-priori energy estimates satisfied by the system \eqref{EE1}.
	
	\begin{proposition}[Energy estimates]\label{EE_prop}
		Let $\u_n(\cdot)$ be the unique solution of the system of stochastic ODEs \eqref{EE1} with $\u_0\in\H$, $\f\in \mathrm{L}^{p'}(0,T;\V_p')$ and {$\xi>0$}. Then, 
		for $ q \in\Big\{2,\frac{2(2p-d)}{p-1},{4+\xi}\Big\},$	we have
		\begin{align}\label{EE002}\nonumber
			&\E\bigg[\sup_{t\in[0,T]}	\|\u_n(t)\|_{\H}^q\bigg]+\frac{\mu q}{2}\E\bigg[\int_0^{T}\|\u_n(s)\|_\H^{q-2}\| \u_n(s)\|_\V^2\d s\bigg]\\&\nonumber\qquad +\frac{\mu q}{4}\E\bigg[\int_0^{T}\|\u_n(s)\|_\H^{q-2}\| \u_n(s)\|_{\V_p}^p\d s\bigg]+q\beta \E\bigg[\int_0^{T}\|\u_n(s)\|_\H^{q-2}\|\u_n(s)\|_{\widetilde{\L}^r}^r\d s\bigg]\\&\leq \bigg(\|\u_0\|_{\H}^q+C(\mu,p,q)\int_0^T\|\f(s)\|_{\V_p'}^{p'}\d s+C(K,q)T\bigg)e^{C(\mu,p,q,K)T},
		\end{align}and for $q=\frac{2d}{p-1}$, 
		\begin{align}\label{EE0125}\nonumber
			&	\E\Bigg[\left(\int_0^{T} \|\u_n(s)\|_{\V}^2\d s\right)^{\frac{q}{2}}+\left(\int_0^{T} \|\u_n(s)\|_{\V_p}^p\d s\right)^{\frac{q}{2}}+\left( \int_0^{T} \|\u_n(s)\|_{\widetilde{\L}^r}^r \d s\right)^{\frac{q}{2}}\Bigg]\\&\leq  C(q)
			\bigg(\|\u_0\|_{\H}^q+C(\mu,p,q)\bigg(\int_0^T\|\f(s)\|_{\V_p'}^{p'}\d s\bigg)^{\frac{q}{2}}+C(K,q,T)\bigg)e^{C(\mu,p,q,K,T)}.
		\end{align}
	\end{proposition}
	\begin{proof}
		%
		Let us define a sequence of stopping times $\tau_N^n$ by 
		\begin{align}\label{EE3}
			\tau_N^n:=\inf_{t\geq 0}\bigg\{t: \|\u_n(t)\|_\H^q+ \int_0^t\|\u_n(s)\|_{\H}^{q-2}\|\u_n(s)\|_{\V_p}^p\d s\geq N\bigg\},
		\end{align}for $N\in\N$.  We apply  finite-dimensional It\^o's formula to the process $\|\u_n(\cdot)\|_\H^q$, for large enough $q$, to find for all $t\in[0,T]$, $\PP$-a.s.,
		\begin{align}\label{EE0021}\nonumber
			&	\|\u_n(t\wedge\tau_N^n)\|_{\H}^q \\&\nonumber= \|\u_n(0)\|_\H^q-q\int_0^{t\wedge\tau_N^n} \|\u_n(s)\|_{\H}^{q-2}\langle \mu\mathcal{A}_n (\u_n(s))+\B_n(\u_n(t))+\beta \mathcal{C}_n(\u_n(s))-\f_n(s),\u_n(s)\rangle \d s\\&\nonumber\quad + \frac{q(q-1)}{2}\sum_{k=1}^n\int_0^{t\wedge\tau_N^n}\|\u_n(s)\|_\H^{q-2}\|\boldsymbol{\sigma}_k(\u_n(s))\|_\H^2\d s\\&\nonumber\quad +q\sum_{k=1}^n\int_0^{t\wedge\tau_N^n}\|\u_n(s)\|_\H^{q-2}\big(\boldsymbol{\sigma}_k(s,\u_n(s))\d\boldsymbol{W}_k(s),\u_n(s)\big)\\&\nonumber\leq 
			\|\u_0\|_\H^q-\frac{\mu q}{2}\int_0^{t\wedge\tau_N^n}\|\u_n(s)\|_\H^{q-2}\big[\| \u_n(s)\|_\V^2+\| \u_n(s)\|_{\V_p}^p\big]\d s\\&\nonumber\quad+ C(K,q)\int_0^{t\wedge\tau_N^n}
			\big(
			1+\|\u_n(s)\|_\H^q)\d s-q\beta \int_0^{t\wedge\tau_N^n}\|\u_n(s)\|_\H^{q-2}\|\u_n(s)\|_{\widetilde{\L}^r}^r\d s\\&\nonumber\quad+q\int_0^{t\wedge\tau_N^n}\|\u_n(s)\|_\H^{q-2}\|\f_n(s)\|_{\V_p'}\|\u_n(s)\|_{\V_p}\d s\\&\quad +q\sum_{k=1}^n\int_0^{t\wedge\tau_N^n}\|\u_n(s)\|_\H^{q-2}\big(\boldsymbol{\sigma}_k(s,\u_n(s))\d\boldsymbol{W}_k(s),\u_n(s)\big),
		\end{align}where we have used the fact that $\langle \B(\u),\u\rangle=0$, $\|\u_n(0)\|_\H\leq \|\u_0\|_\H$,  \eqref{2.2} and Hypothesis \ref{hyp} (H.1).

		Using Young's inequality in \eqref{EE0021}, we obtain for all $t\in[0,T]$, $\PP$-a.s.,
		\begin{align}\label{EE0022}\nonumber
			&	\|\u_n(t\wedge\tau_N^n)\|_{\H}^q+\frac{\mu q}{2}\int_0^{t\wedge\tau_N^n}\|\u_n(s)\|_\H^{q-2}\| \u_n(s)\|_\V^2\d s+\frac{\mu q}{4}\int_0^{t\wedge\tau_N^n}\|\u_n(s)\|_\H^{q-2}\| \u_n(s)\|_{\V_p}^p\d s\\&\nonumber\qquad +q\beta \int_0^{t\wedge\tau_N^n}\|\u_n(s)\|_\H^{q-2}\|\u_n(s)\|_{\widetilde{\L}^r}^r\d s \\&\nonumber\leq \|\u_0\|_\H^q +C(\mu,p,q)\int_0^t\|\f(s)\|_{\V_p'}^{p'}\d s+C(K,q)T+C(\mu,p,q,K)\int_0^{t\wedge\tau_N^n}\|\u_n(s)\|_\H^q\d s \\&\quad +q\sum_{k=1}^n\int_0^{t\wedge\tau_N^n}\|\u_n(s)\|_\H^{q-2}\big(\boldsymbol{\sigma}_k(s,\u_n(s))\d\boldsymbol{W}_k(s),\u_n(s)\big).
		\end{align}Let us take expectation on both sides of the inequality \eqref{EE0022}, and the fact that the final term in the right hand side of \eqref{EE0022} is a martingale to find 
		\begin{align}\label{EE0122}\nonumber
			&\E\bigg[	\|\u_n(t\wedge\tau_N^n)\|_{\H}^q+\frac{\mu q}{2}\int_0^{t\wedge\tau_N^n}\|\u_n(s)\|_\H^{q-2}\| \u_n(s)\|_\V^2\d s+\frac{\mu q}{4}\int_0^{t\wedge\tau_N^n}\|\u_n(s)\|_\H^{q-2}\| \u_n(s)\|_{\V_p}^p\d s\bigg]\\&\nonumber\qquad +q\beta\E\bigg[ \int_0^{t\wedge\tau_N^n}\|\u_n(s)\|_\H^{q-2}\|\u_n(s)\|_{\widetilde{\L}^r}^r\d s\bigg] \\&\leq \|\u_0\|_\H^q +C(\mu,p,q)\int_0^t\|\f(s)\|_{\V_p'}^{p'}\d s+C(K,q)T+C(\mu,p,q,K)\E\bigg[\int_0^{t\wedge\tau_N^n}\|\u_n(s)\|_\H^q\d s\bigg].
		\end{align}An application of Gronwall's inequality in \eqref{EE0122} yields
		\begin{align}\label{EE01221}
			\E\big[	\|\u_n(t\wedge\tau_N^n)\|_{\H}^q\big]\leq \bigg(\|\u_0\|_\H^q +C(\mu,p,q)\int_0^t\|\f(s)\|_{\V_p'}^{p'}\d s+C(K,q)T\bigg)e^{C(\mu,p,q,K)T},
		\end{align}for $t\in[0,T]$. It can be shown that (see \cite[Proposition 3.5]{MTM8})
		\begin{align}\label{EE9}
			\lim_{N\to\infty}\mathbb{P} \big\{\omega\in\Omega:\tau_N^n(\omega)<t\big\}=0, \text{ for all } \ t\in[0,T],
		\end{align}and $t\wedge \tau_N^n \to t$, as $N\to\infty$.  Then on passing limit $N\to\infty$ in \eqref{EE01221} and using the Monotone Convergence Theorem, we find 
		\begin{align}\label{EE10}
			\E\big[	\|\u_n(t)\|_{\H}^q\big] \leq \bigg(\|\u_0\|_\H^q +C(\mu,p,q,K)\int_0^t\|\f(s)\|_{\V_p'}^{p'}\d s+C(K,q)T\bigg)e^{C(\mu,p,q,K)T},
		\end{align}for $t\in[0,T]$. Substituting \eqref{EE10} in \eqref{EE0122}, we deduce 
		\begin{align}\label{EE11}\nonumber
			&\E\bigg[	\|\u_n(t)\|_{\H}^q+\frac{\mu q}{2}\int_0^{t}\|\u_n(s)\|_\H^{q-2}\| \u_n(s)\|_\V^2\d s\d s+\frac{\mu q}{4}\int_0^{t}\|\u_n(s)\|_\H^{q-2}\| \u_n(s)\|_{\V_p}^p\d s\bigg]\\&\nonumber\qquad +q\beta\E\bigg[ \int_0^{t\wedge\tau_N^n}\|\u_n(s)\|_\H^{q-2}\|\u_n(s)\|_{\widetilde{\L}^r}^r\d s\bigg] 	 \\&\leq 
			\bigg(\|\u_0\|_\H^q +C(\mu,p,q)\int_0^t\|\f(s)\|_{\V_p'}^{p'}\d s+C(K,q)T\bigg)e^{C(\mu,p,q,K)T},
		\end{align}for $t\in[0,T]$. Note that the right hand side of the above inequality is independent of $n$.

		Taking supremum from $0$ to $T\wedge\tau_N^n$, and then taking expectation in \eqref{EE0022}, we find
		\begin{align}\label{EE0023}\nonumber
			&\E\bigg[\sup_{t\in[0,T\wedge \tau_N^n]}	\|\u_n(t)\|_{\H}^q\bigg]+\frac{\mu q}{2}\E\bigg[\int_0^{T\wedge \tau_N^n}\|\u_n(s)\|_\H^{q-2}\| \u_n(s)\|_\V^2\d s\d s\bigg]\\&\nonumber\qquad +\frac{\mu q}{4}\E\bigg[\int_0^{T\wedge \tau_N^n}\|\u_n(s)\|_\H^{q-2}\| \u_n(s)\|_{\V_p}^p\d s\bigg]+q\beta \E\bigg[\int_0^{T\wedge \tau_N^n}\|\u_n(s)\|_\H^{q-2}\|\u_n(s)\|_{\widetilde{\L}^r}^r\d s\bigg] \\&\nonumber\leq \|\u_0\|_\H^q +C(\mu,p,q)\int_0^T\|\f(s)\|_{\V_p'}^{p'}\d s+C(K,q)T+C(\mu,p,q,K)\E\bigg[\int_0^{T\wedge\tau_N^n}\|\u_n(s)\|_\H^q\d s\bigg] \\&\quad +q\E\bigg[\sup_{t\in[0,T\wedge\tau_N^n]}\bigg|\sum_{k=1}^n\int_0^t\|\u_n(s)\|_\H^{q-2}\big(\boldsymbol{\sigma}_k(s,\u_n(s))\d\boldsymbol{W}_k(s),\u_n(s)\big)\bigg|\bigg].
		\end{align}Let us consider the final term appearing in the right hand side of the inequality \eqref{EE0023}, and estimate it using Burkholder-Davis-Gundy, H\"older's and Young's inequalities and Hypothesis \ref{hyp} (H.1), as
		\begin{align}\label{EE0024}\nonumber
			&q	\E\bigg[\sup_{t\in[0,T\wedge\tau_N^n]}\bigg|\sum_{k=1}^n\int_0^t\|\u_n(s)\|_\H^{q-2}\big(\boldsymbol{\sigma}_k(s,\u_n(s))\d\boldsymbol{W}_k(s),\u_n(s)\big)\bigg|\bigg]\\&\nonumber\leq 
			C(q)\E\bigg[\bigg(\sum_{k=1}^n\int_0^{T\wedge \tau_N^n}\|\u_n(s)\|_\H^{2q-2}\|\boldsymbol{\sigma}_k(s,\u_n(s))\|_\H^2\d s\bigg)^\frac{1}{2}\bigg]
			\\&\nonumber\leq 
			C(q)\E\bigg[\bigg(\|\u_n(s)\|_\H^{q}\sum_{k=1}^n\int_0^{T\wedge \tau_N^n}\|\u_n(s)\|_\H^{q-2}\|\boldsymbol{\sigma}_k(s,\u_n(s))\|_\H^2\d s\bigg)^\frac{1}{2}\bigg]\\&\nonumber\leq 
			\e \E\bigg[\sup_{t\in[0,T\wedge \tau_N^n]}\|\u_n(s)\|_\H^q\bigg]+C(\e,q)\sum_{k=1}^n\E\bigg[\int_0^T\|\u_n(s)\|_\H^{q-2}\|\boldsymbol{\sigma}_k(s,\u_n(s))\|_{\H}^2\d s\bigg]
			\\&\leq 
			\e \E\bigg[\sup_{t\in[0,T\wedge \tau_N^n]}\|\u_n(s)\|_\H^q\bigg]+C(\e,K,q)T+C(\e,K,q)\E\bigg[\int_0^T\|\u_n(s)\|_\H^{q}\d s\bigg],
		\end{align}for some $\e>0$.
		
		Substituting \eqref{EE0024} in \eqref{EE0023} with an appropriate choice of $\e$ and then applying  Gronwall's inequality in the resultant to deduce 
		\begin{align}\label{EE0025}
			\E\bigg[\sup_{t\in[0,T\wedge \tau_N^n]}	\|\u_n(t)\|_{\H}^q\bigg] \leq \bigg(\|\u_0\|_{\H}^q+C(\mu,p,q)\int_0^T\|\f(s)\|_{\V_p'}^{p'}\d s+C(K,q)T\bigg)e^{C(\mu,p,q,K)T}.
		\end{align}Passing $N\to\infty$, using the Monotone Convergence Theorem and then substituting \eqref{EE0025} in \eqref{EE0023} produce the required result \eqref{EE002}.

		One can obtain the estimate \eqref{EE0125} by choosing $q=2$ in \eqref{EE002} and then rising the $\frac{q}{2}$-{th} power on both sides of the resultant and then performing the similar calculations as in the proof of \eqref{EE002}. 
	\end{proof}
\begin{remark}
	One can work with general unbounded domains also with positive $\alpha$ (damping coefficient appearing in the system \eqref{31}). Due to unavailability of the Poincar\'e inequality \eqref{2.1}, one has to work with the full norm of the spaces $\V$ and $\V_p$. In this remark, we are providing the energy estimates and necessary modification in the case of general unbounded domains. 
		
		Let $\u_n(\cdot)$ be the unique solution of the system of stochastic ODEs \eqref{EE1} with $\u_0\in\H$, $\f\in \mathrm{L}^{p'}(0,T;\V_p')$, {$\xi>0$} and {$r=p$}. Then, 
		for $ q \in\Big\{2,\frac{2(2p-d)}{p-1},{4+\xi}\Big\}$,	we have
		\begin{align}\label{R.EE002}\nonumber
			&\E\bigg[\sup_{t\in[0,T]}	\|\u_n(t)\|_{\H}^q\bigg]+qC_{\alpha,\mu/2}\E\bigg[\int_0^{T}\|\u_n(s)\|_\H^{q-2}\| \u_n(s)\|_\V^2\d s\bigg]\\&\nonumber\qquad +\frac{qC_{\beta,\mu/2}}{2}\E\bigg[\int_0^{T}\|\u_n(s)\|_\H^{q-2}\| \u_n(s)\|_{\V_p}^p\d s\bigg]
			\\&\leq \bigg(\|\u_0\|_{\H}^q+C(\mu,\beta,p,q)\int_0^T\|\f(s)\|_{\V_p'}^{p'}\d s+C(K,q)T\bigg)e^{C(\mu,p,q,K)T},
		\end{align}where the constants are given by 
		\begin{align}\label{R.FNorm}
		C_{\alpha,\mu/2}:=\min\big\{\alpha,\mu/2\big\},  \ \  C_{\beta,\mu/2}:=\min\big\{\beta,\mu/2\big\}, \   \text{ and }  \ r=p,
	\end{align}
	and for $q=\frac{2d}{p-1}$, 
		\begin{align}\label{R.EE0125}\nonumber
			&	\E\Bigg[\left(\int_0^{T} \|\u_n(s)\|_{\V}^2\d s\right)^{\frac{q}{2}}+\left(\int_0^{T} \|\u_n(s)\|_{\V_p}^p\d s\right)^{\frac{q}{2}}
			\Bigg]\\&\leq  C(q)
			\bigg(\|\u_0\|_{\H}^q+C(\mu,\beta,p,q)\bigg(\int_0^T\|\f(s)\|_{\V_p'}^{p'}\d s\bigg)^{\frac{q}{2}}+C(K,q,T)\bigg)e^{C(\mu,p,q,K,T)}.
		\end{align}

		%
		Let us define a sequence of stopping times $\tau_N^n$ by 
		\begin{align}\label{R.EE3}
			\tau_N^n:=\inf_{t\geq 0}\bigg\{t: \|\u_n(t)\|_\H^q+ \int_0^t\|\u_n(s)\|_{\H}^{q-2}\|\u_n(s)\|_{\V_p}^p\d s\geq N\bigg\},
		\end{align}for $N\in\N$.  We apply  finite-dimensional It\^o's formula to the process $\|\u_n(\cdot)\|_\H^q$, for large enough $q$, to find for all $t\in[0,T]$, $\PP$-a.s.,
		\begin{align}\label{R.EE0021}\nonumber
			&	\|\u_n(t\wedge\tau_N^n)\|_{\H}^q 
			\\&\nonumber
			\leq 
			\|\u_0\|_\H^q-q\int_0^{t\wedge\tau_N^n}\|\u_n(s)\|_\H^{q-2}\Big[\alpha \|\u_n(s)\|_{\H}^2+\frac{\mu}{2}\|\nabla  \u_n(s)\|_{\H}^2\Big]\d s\\&\nonumber\quad
			-q \int_0^{t\wedge\tau_N^n}	\|\u_n(s)\|_\H^{q-2} \Big[\beta \|\u_n(s)\|_{\widetilde{\L}^r}^r+\frac{\mu}{2}\|\nabla  \u_n(s)\|_{\wi{\L}^p}^p \Big]\d s+ C(K,q)\int_0^{t\wedge\tau_N^n}
			\big(
			1+\|\u_n(s)\|_\H^q)\d s\\&\nonumber\quad+q\int_0^{t\wedge\tau_N^n}\|\u_n(s)\|_\H^{q-2}\|\f_n(s)\|_{\V_p'}\|\u_n(s)\|_{\V_p}\d s\\&\nonumber\quad +q\sum_{k=1}^n\int_0^{t\wedge\tau_N^n}\|\u_n(s)\|_\H^{q-2}\big(\boldsymbol{\sigma}_k(s,\u_n(s))\d\boldsymbol{W}_k(s),\u_n(s)\big)
			\\&\nonumber
			\leq 
			\|\u_0\|_\H^q-qC_{\alpha,\mu/2}\int_0^{t\wedge\tau_N^n}\|\u_n(s)\|_\H^{q-2} \|\u_n(s)\|_{\V}^2\d s\\&\nonumber\quad
			-q C_{\beta,\mu/2}\int_0^{t\wedge\tau_N^n}	\|\u_n(s)\|_\H^{q-2} \|  \u_n(s)\|_{\V_p}^p \d s+ C(K,q)\int_0^{t\wedge\tau_N^n}
			\big(
			1+\|\u_n(s)\|_\H^q)\d s\\&\nonumber\quad+q\int_0^{t\wedge\tau_N^n}\|\u_n(s)\|_\H^{q-2}\|\f_n(s)\|_{\V_p'}\|\u_n(s)\|_{\V_p}\d s\\&\quad +q\sum_{k=1}^n\int_0^{t\wedge\tau_N^n}\|\u_n(s)\|_\H^{q-2}\big(\boldsymbol{\sigma}_k(s,\u_n(s))\d\boldsymbol{W}_k(s),\u_n(s)\big),
		\end{align}	where we have used the fact that $\langle \B(\u),\u\rangle=0$, $\|\u_n(0)\|_\H\leq \|\u_0\|_\H$,  \eqref{2.2} and Hypothesis \ref{hyp} (H.1), 
		and the constants are given in \eqref{R.FNorm}.
%
%
%
	Let us take expectation on both sides of the inequality \eqref{R.EE0021}, and use the fact that the final term in the right hand side of \eqref{R.EE0021} is a martingale to find 
		\begin{align}\label{R.EE0122}\nonumber
			&\E\bigg[	\|\u_n(t\wedge\tau_N^n)\|_{\H}^q+qC_{\alpha,\mu/2}\int_0^{t\wedge\tau_N^n}\|\u_n(s)\|_\H^{q-2}\| \u_n(s)\|_\V^2\d s\bigg]\\&\quad \nonumber+\frac{qC_{\beta,\mu/2}}{2}\E\bigg[\int_0^{t\wedge\tau_N^n}\|\u_n(s)\|_\H^{q-2}\| \u_n(s)\|_{\V_p}^p\d s\bigg]
			\\&\leq \|\u_0\|_\H^q +C(\mu,\beta,p,q)\int_0^t\|\f(s)\|_{\V_p'}^{p'}\d s+C(K,q)T+C(\mu,p,q,K)\E\bigg[\int_0^{t\wedge\tau_N^n}\|\u_n(s)\|_\H^q\d s\bigg].
		\end{align}
The remaining parts of the proofs of estimates \eqref{R.EE002} and \eqref{R.EE0125} follow along similar lines as the proof of Proposition \ref{EE_prop}.
\end{remark}

	\begin{lemma}\label{EU_lem1}
		For $p\geq \frac{d}{2}+1$, $r\geq 2$, $ q\in\Big\{2,\frac{2(2p-d)}{p-1},{4+\xi}\Big\}$, and any functions 
		\begin{align*}
			\u,\v\in\mathrm{L}^q(\Omega;\mathrm{L}^\infty(0,T;\H))\cap \mathrm{L}^p(\Omega;\mathrm{L}^p(0,T;\V_p))\cap \mathrm{L}^r(\Omega;\mathrm{L}^r(0,T;\widetilde{\L}^{r}))
		\end{align*}we have 
		\begin{align}\label{EE17}\nonumber
			&\int_0^Te^{-\widetilde{\eta}t}\Big[\mu\langle \mathcal{A}(\u(t))-\mathcal{A}(\v(t)),\u(t)-\v(t)\rangle+\langle \B(\u(t))-\B(\v(t)),\u(t)-\v(t)\rangle \\&\nonumber\quad+\beta \langle \mathcal{C}(\u(t))-\mathcal{C}(\v(t)),\u(t)-\v(t)\rangle \Big]\d t+\Big(\widetilde{\eta}+\frac{L}{2}\Big)\int_0^Te^{-\widetilde{\eta}t}\|\v(t)\|_{\V_p}^{\frac{2p}{2p-d}}\|\u(t)-\v(t)\|_{\H}^2\d t \\&\geq \frac{1}{2}\sum_{k=1}^\infty\int_0^Te^{-\widetilde{\eta}t}\|\boldsymbol{\sigma}_k(t,\u(t))-\boldsymbol{\sigma}_k(t,\v(t))\|_{\H}^2\d t,
		\end{align}where $\widetilde{\eta}$ is defined in \eqref{eta1}.
	\end{lemma}
	\begin{proof}
		Multiplying \eqref{mon1} with $e^{-\widetilde{\eta}t}$, integrating over time $t\in[0,T]$, and using the Hypothesis \ref{hyp} (H.2), we obtain the required inequality \eqref{EE17}.
	\end{proof}

Let us recall some results from the books \cite{PPB,VIB}, which are essential to establish  the energy equality (see \textbf{Step 4} below).
\begin{definition}[{\cite[Definition 4.5.1]{VIB}}]
	A set of functions $\mathcal{M}\subset \mathrm{L}^1(\Omega,\mathscr{F},\PP)$ is called uniformly integrable if 
	\begin{align}\label{UID1}
		\lim_{N\to\infty}\sup_{g\in\mathcal{M}}\int_{\{|g|>N\}}|g|\d\PP=0,
	\end{align}that is, for every $\e>0$, there exists $N_\e>0$ such that 
\begin{align}\label{UID2}
	\sup_{g\in\mathcal{M}}\int_{\{|g|>N_\e\}}|g|\d\PP<\e.
\end{align}
\end{definition}
\begin{theorem}[Lebesgue-Vitali theorem, {\cite[Theorem 4.5.4]{VIB}}]\label{LVT}
	Let $\{g_n\}_{n\in\N}\subset \mathrm{L}^p(\Omega,\mathscr{F},\PP)$ and $g$ be an $\mathscr{F}$-measurable function. Then, the following are equivalent:
	\begin{enumerate}
		\item $g\in \mathrm{L}^p(\Omega,\mathscr{F},\PP)$ and $\{g_n\}_{n\in\N}$ converges to $g$ in $ \mathrm{L}^p(\Omega,\mathscr{F},\PP)$;
		\item The sequence of functions $\{g_n\}_{n\in\N}$ converges in $\mathbb{P}$-measure to $g$ and $\{|g_n|^p\}_{n\in\N}$ is uniformly integrable.
	\end{enumerate}
\end{theorem}
\begin{remark}\label{rem4.5}
	 From \cite[pp. 31]{PPB}, a simple condition for uniform integrability is that 
	 \begin{align*}
	 	\sup_{n\in\N}\E\big[|g_n|^{1+\e}\big]<\infty,\ \text{ for some }\ \e>0,
	 \end{align*}in this case, an application of H\"older's and Markov's inequalities yield
 \begin{align*}
 	\int_{\{|g_n|>N\}}|g_n|\d\PP \leq \frac{1}{N_\e}\E\big[|g_n|^{1+\e}\big].
 \end{align*} 
\end{remark}

	\subsection{Existence and uniqueness of strong solution}
	In this subsection, we discuss the existence and uniqueness of strong solution (probabilistic sense) of the system \eqref{32} by exploiting the local monotonicity property and a stochastic generalization of the Minty-Browder technique. Similar existence results for different types of stochastic partial differential equations have been explored in the works \cite{ICAM,MTM8,MJSS,MTM6,SSSP}, etc.
	
	\begin{proof}[Proof of Theroem \ref{exis2}]
		The global solvability results to the system \eqref{32} is divided into the following steps:
		
		\vskip2mm 
		\noindent
		\textbf{Step 1:} \emph{Finite-dimensional (Galerkin) approximation to the system \eqref{32}}:  Let us first consider the following Galerkin approximated It\^o stochastic differential equation satisfied by $\u_n(\cdot)$:
		\begin{equation}\label{EE18}
			\left\{
			\begin{aligned}
				\d \u_n(t)&= -\mathscr{H}_n(\u_n(t))\d t+\sum_{k=1}^n\boldsymbol{\sigma}_k(t,\u_n(t))\d \boldsymbol{W}_k(t), \\
				\u_n(0)&=\u_0^n,
			\end{aligned}
			\right.
		\end{equation}where $$\mathscr{H}_n(\u_n(\cdot))=\mu\mathcal{A}(\u_n(\cdot))+\B_n(\u_n(\cdot))+\beta\mathcal{C}_n(\u_n(\cdot))-\f_n(\cdot).$$  Applying finite-dimensional It\^o's formula to the process $e^{-\rho(\cdot)}\|\u_n(\cdot)\|_{\H}^2$, we obtain for all $t\in[0,T]$, $\mathbb{P}$-a.s.,
		\begin{align}\label{EE19}\nonumber
			&e^{-\rho(t)}\|\u_n(t)\|_{\H}^2\\&\nonumber=\|\u_0^n\|_{\H}^2-\int_0^t e^{-\rho(s)}\langle2 \mathscr{H}_n(\u_n(s))+\rho'(s)\u_n(s),\u_n(s)\rangle \d s\\&\quad +\sum_{k=1}^n\int_0^te^{-\rho(s)} \|\boldsymbol{\sigma}_k(s,\u_n(s))\|_{\H}^2\d s +2\sum_{k=1}^n\int_0^te^{-\rho(s)}\big(\boldsymbol{\sigma}_k(s,\u_n(s))\d \boldsymbol{W}_k(s),\u_n(s)\big),
		\end{align}where $\rho(\cdot)$ is defined later (see \eqref{EE69} below).
		
		Note that the final term in the right hand side of the above equality is a martingale and on taking expectation on both sides, we find for all $t\in[0,T]$
		\begin{align}\label{EE20}\nonumber
			\E\Big[e^{-\rho(t)}\|\u_n(t)\|_{\H}^2\Big]&=\|\u_0^n\|_{\H}^2-\E\bigg[\int_0^t e^{-2\rho(s)}\langle 2\mathscr{H}_n(\u_n(s))+\rho'(s)\u_n(s),\u_n(s)\rangle \d s\bigg]\\&\quad +\E\bigg[\sum_{k=1}^n\int_0^te^{-\rho(s)} \|\boldsymbol{\sigma}_k(s,\u_n(s))\|_{\H}^2\d s\bigg].
		\end{align}
		
		\vskip2mm 
		\noindent
		\textbf{Step 2:} \emph{Weak convergence of the sequences $\u_n(\cdot),\ \mathscr{H}_n(\u_n(\cdot))$ and $\boldsymbol{\sigma}^n(\cdot,\cdot)\  (=:\Pi_n\boldsymbol{\sigma}(\cdot,\cdot))$:} We know that $\mathrm{L}^2(\Omega;\mathrm{L}^\infty(0,T;\H))\cong \mathrm{L}^2(\Omega;\mathrm{L}^1(0,T;\H))'$ and the space $\mathrm{L}^2(\Omega;\mathrm{L}^1(0,T;\H))$ is separable. Moreover, the spaces $\mathrm{L}^p(\Omega;\mathrm{L}^p(0,T;\V_p))$ and $\mathrm{L}^r(\Omega;\mathrm{L}^r(0,T;\widetilde{\L}^r))$ are reflexive ($\X''\cong \X$). From the energy estimate \eqref{EE002} (for $q=2$) given in Proposition \ref{EE_prop}, we know that the sequence $\{\u_n(\cdot)\}$ is uniformly bounded in the spaces $\mathrm{L}^2(\Omega;\mathrm{L}^\infty(0,T;\H))$, $\mathrm{L}^p(\Omega;\mathrm{L}^p(0,T;\V_p))$ and $\mathrm{L}^r(\Omega;\mathrm{L}^r(0,T;\widetilde{\L}^r))$. Using the Banach-Alaoglu theorem, we can find a subsequence $\{\u_{n_k}(\cdot)\}$ of $\{\u_n(\cdot)\}$ such that (we still denote the index $n_k$ of subsequence by $n$):
		\begin{equation}\label{EE21}
			\left\{
			\begin{aligned}
				\u_n &\xrightarrow{w^*} \u, \ \text{ in}  &&\mathrm{L}^2(\Omega;\mathrm{L}^\infty(0,T;\H),\\
				\u_n &\xrightarrow{w} \u, \ \text{ in}  &&\mathrm{L}^p(\Omega;\mathrm{L}^p(0,T;\V_p)),\\
				\u_n &\xrightarrow{w} \u,	 \ \text{ in}  &&\mathrm{L}^r(\Omega;\mathrm{L}^r(0,T;\widetilde{\L}^r)),\\
				\mathscr{H}_n(\u_n) &\xrightarrow{w} \mathscr{H}_0, \ \text{ in} && \mathrm{L}^{p'}(\Omega;\mathrm{L}^{p'}(0,T;\V_p'))+ \mathrm{L}^{\frac{r}{r-1}}(\Omega;\mathrm{L}^{\frac{r}{r-1}}(0,T;\widetilde{\L}^{\frac{r}{r-1}})).
			\end{aligned}
			\right.
		\end{equation}The final convergence in \eqref{EE20} can be justified as follows:
		\begin{align}\label{EE021}\nonumber
			&\E\bigg[\bigg|\int_0^T \langle \mathscr{H}_n(\u_n(t)),\v(t)\rangle\d t \bigg|\bigg]\\&\nonumber  \leq 
			\mu\E\bigg[\bigg|\int_0^T\langle \mathcal{A}_n(\u_n(t)),\v(t)\rangle \d t\bigg]+\E\bigg[\bigg|\int_0^T \langle \B_n(\u_n(t),\u_n(t)),\v(t)\rangle \d t \bigg|\bigg]\\&\nonumber\quad + \beta \E\bigg[\bigg|\int_0^T \langle \mathcal{C}_n(\u_n(t)),\v(t)\rangle \d t\bigg]+ \E\bigg[\bigg|\int_0^T \langle \f_n(t), \v(t)\rangle\d t  \bigg|\bigg]\\&\nonumber  \leq
			C(p,\mu)\E\bigg[\int_0^T \big(1+\|\u_n(t)\|_{\V_p}^{p-1}\big)\|\v(t)\|_{\V_p}\d t \bigg]+\E\bigg[\int_0^T\|\u_n(t)\|_{\widetilde{\L}^{\frac{2p}{p-1}}}^2\|\v(t)\|_{\V}\d t\bigg] \\&\nonumber\quad +
			C(\beta)\E\bigg[\int_0^T \|\u_n(t)\|_{\widetilde{\L}^r}^{r-1}\|\v(t)\|_{\widetilde{\L}^r}\d t\bigg]+\E\bigg[\int_0^T\|\f_n(t)\|_{\V_p'}\|\v(t)\|_{\V_p}\d t\bigg]\\&\nonumber  \leq
			C(p,\mu)\E\bigg[T^{\frac{p-1}{p}}\bigg(\int_0^T\|\v(t)\|_{\V_p}^p\d t\bigg)^\frac{1}{p}+ \bigg(\int_0^T\|\u_n(t)\|_{\V_p}^p\d t\bigg)^\frac{p-1}{p}\bigg(\int_0^T\|\v(t)\|_{\V_p}^p\d t\bigg)^\frac{1}{p}\bigg]\\&\nonumber\quad +\E\bigg[\bigg(\int_0^T\|\u_n(t)\|_{\V}^{\frac{d}{p-1}}\|\u_n(t)\|_{\H}^{\frac{2p-d}{p-1}}\d t\bigg)^{\frac{p-1}{p}} \bigg(\int_0^T\|\v(t)\|_{\V_p}^p\d t\bigg)^\frac{1}{p}\bigg]\\&\nonumber\quad + 
			C(\beta) \E\bigg[\bigg(\int_0^T\|\u_n(t)\|_{\widetilde{\L}^r}^r\d t\bigg)^\frac{r-1}{r}\bigg(\int_0^T\|\v(t)\|_{\widetilde{\L}^r}^r\d t\bigg)^\frac{1}{r}\bigg]\\&\nonumber\quad + 
			\E\bigg[\bigg(\int_0^T\|\f_n(t)\|_{\V_p'}^{p'}\d t\bigg)^\frac{1}{p'}\bigg(\int_0^T\|\v(t)\|_{\V_p}^p\d t\bigg)^\frac{1}{p}\bigg]
			\\&\nonumber  \leq
			C(p,\mu)T^{\frac{p-1}{p}}\bigg\{\E\bigg[\int_0^T\|\v(t)\|_{\V_p}^p\d t\bigg]\bigg\}^\frac{1}{p}\\&\nonumber\quad+C(p,\mu) \bigg\{\E\bigg[\int_0^T\|\u_n(t)\|_{\V_p}^p\d t\bigg]\bigg\}^\frac{p-1}{p}\bigg\{\E\bigg[\int_0^T\|\v(t)\|_{\V_p}^p\d t\bigg]\bigg\}^\frac{1}{p}\\&\nonumber\quad +\bigg\{\E\bigg[\int_0^T\|\u_n(t)\|_{\V}^{\frac{d}{p-1}}\|\u_n(t)\|_{\H}^{\frac{2p-d}{p-1}}\d t\bigg]\bigg\}^{\frac{p-1}{p}}\bigg\{\E\bigg[ \int_0^T\|\v(t)\|_{\V_p}^p\d t\bigg]\bigg\}^\frac{1}{p}\\&\nonumber\quad + 
			C(\beta) \bigg\{\E\bigg[\int_0^T\|\u_n(t)\|_{\widetilde{\L}^r}^r\d t\bigg]\bigg\}^\frac{r-1}{r}\bigg\{\E\bigg[\int_0^T\|\v(t)\|_{\widetilde{\L}^r}^r\d t\bigg]\bigg\}^\frac{1}{r}\\&\nonumber\quad + 
			\bigg\{\E\bigg[\int_0^T\|\f_n(t)\|_{\V_p'}^{p'}\d t\bigg]\bigg\}^\frac{1}{p'}\bigg\{\E\bigg[\int_0^T\|\v(t)\|_{\V_p}^p\d t\bigg]\bigg\}^\frac{1}{p}
			\\&\nonumber  \leq
			C(p,\mu)T^{\frac{p-1}{p}}\bigg\{\E\bigg[\int_0^T\|\v(t)\|_{\V_p}^p\d t\bigg]\bigg\}^\frac{1}{p}\\&\nonumber\quad+C(p,\mu) \bigg\{\E\bigg[\int_0^T\|\u_n(t)\|_{\V_p}^p\d t\bigg]\bigg\}^\frac{p-1}{p}\bigg\{\E\bigg[\int_0^T\|\v(t)\|_{\V_p}^p\d t\bigg]\bigg\}^\frac{1}{p}\\&\nonumber\quad +T^{\frac{2p-2-d}{2p}}\bigg\{\E\bigg[\sup_{t\in[0,T]}\|\u_n(t)\|_{\H}^{\frac{2p-d}{p-1}}\bigg(\int_0^T\|\u_n(t)\|_{\V}^{2}\d t\bigg)^\frac{d}{2(p-1)}\bigg]\bigg\}^{\frac{p-1}{p}}\bigg\{\E\bigg[ \int_0^T\|\v(t)\|_{\V_p}^p\d t\bigg]\bigg\}^\frac{1}{p}\\&\nonumber\quad + 
			C(\beta) \bigg\{\E\bigg[\int_0^T\|\u_n(t)\|_{\widetilde{\L}^r}^r\d t\bigg]\bigg\}^\frac{r-1}{r}\bigg\{\E\bigg[\int_0^T\|\v(t)\|_{\widetilde{\L}^r}^r\d t\bigg]\bigg\}^\frac{1}{r}\\&\nonumber\quad + 
			\bigg\{\E\bigg[\int_0^T\|\f_n(t)\|_{\V_p'}^{p'}\d t\bigg]\bigg\}^\frac{1}{p'}\bigg\{\E\bigg[\int_0^T\|\v(t)\|_{\V_p}^p\d t\bigg]\bigg\}^\frac{1}{p}
			\\&\nonumber  \leq
			C(p,\mu)T^{\frac{p-1}{p}}\bigg\{\E\bigg[\int_0^T\|\v(t)\|_{\V_p}^p\d t\bigg]\bigg\}^\frac{1}{p}\\&\nonumber\quad+C(p,\mu) \bigg\{\underbrace{\E\bigg[\int_0^T\|\u_n(t)\|_{\V_p}^p\d t\bigg]}_{I_1}\bigg\}^\frac{p-1}{p}\bigg\{\E\bigg[\int_0^T\|\v(t)\|_{\V_p}^p\d t\bigg]\bigg\}^\frac{1}{p}\\&\nonumber\quad +T^{\frac{2p-2-d}{2p}}\bigg\{\underbrace{\E\bigg[\sup_{t\in[0,T]}\|\u_n(t)\|_{\H}^{\frac{2(2p-d)}{p-1}}\bigg]}_{I_2}\bigg\}^\frac{p-1}{2p}\bigg\{\underbrace{\E\bigg[\bigg(\int_0^T\|\u_n(t)\|_{\V}^{2}\d t\bigg)^\frac{d}{(p-1)}\bigg]}_{I_3}\bigg\}^{\frac{p-1}{2p}}\\&\nonumber\qquad\times\bigg\{\E\bigg[ \int_0^T\|\v(t)\|_{\V_p}^p\d t\bigg]\bigg\}^\frac{1}{p}\\&\nonumber\quad + 
			C(\beta) \bigg\{\underbrace{\E\bigg[\int_0^T\|\u_n(t)\|_{\widetilde{\L}^r}^r\d t\bigg]}_{I_4}\bigg\}^\frac{r-1}{r}\bigg\{\E\bigg[\int_0^T\|\v(t)\|_{\widetilde{\L}^r}^r\d t\bigg]\bigg\}^\frac{1}{r}\\&\quad + 
			\bigg\{\E\bigg[\int_0^T\|\f_n(t)\|_{\V_p'}^{p'}\d t\bigg]\bigg\}^\frac{1}{p'}\bigg\{\E\bigg[\int_0^T\|\v(t)\|_{\V_p}^p\d t\bigg]\bigg\}^\frac{1}{p},
		\end{align}for all $\v\in \mathrm{L}^p(\Omega;\mathrm{L}^p(0,T;\V_p))\cap\mathrm{L}^r(\Omega;\mathrm{L}^r(0,T;\widetilde{\L}^r))$, for $p\geq \frac{d}{2}+1$ and $r\geq 2$, where we have used \eqref{2.2}, Gagliardo-Nirenberg's, H\"older's inequalities, the estimates \eqref{EE002} and \eqref{EE0125} for the permissible $q$ given in Proposition \ref{EE_prop}. That is, for $I_1,I_4$ and $I_3$ we used the estimate \eqref{EE002} with $q=2$, and $q= \frac{2(2p-d)}{p-1}$, respectively, finally for the remaining term $I_3$ we used the estimate \eqref{EE0125} with $q=\frac{2d}{p-1}$.
		
		Using the Hypothesis \ref{hyp} and the energy estimates \eqref{EE002} (with $q=2,4$), we obtain 
		\begin{align}\label{EE22}
			\E\bigg[\sum_{k=1}^n\int_0^T\|\boldsymbol{\sigma}_k^n(t,\u_ n(t))\|_{\H}^2\d t\bigg] \leq K\E\bigg[\int_0^T\big(1+\|\u_n(t)\|_{\H}^2\big)\d t\bigg]
			<\infty,
		\end{align}and 
	\begin{align}\label{EE220}\nonumber
		\E\bigg[\bigg(\sum_{k=1}^n\int_0^T\|\boldsymbol{\sigma}_k^n(t,\u_n(t))\|_\H^2\d t\bigg)^2\bigg]&\nonumber \leq K^2 \E\bigg[\bigg(\int_0^T\big(1+\|\u_n(t)\|_\H^2\big)\d t\bigg)^2\bigg]\\&\nonumber
		\leq 
		K^2T \E\bigg[\int_0^T \big(1+\|\u_n(t)\|_\H^2\big)^2\d t\bigg]\\&\nonumber
		\leq 
		2	K^2T \E\bigg[\int_0^T \big(1+\|\u_n(t)\|_\H^4\big)\d t\bigg]\\&\nonumber
		\leq 
		2	K^2T^2 \bigg\{1+\E\bigg[\sup_{t\in[0,T]}\|\u_n(t)\|_\H^4\bigg]\bigg\}
		\\&\nonumber\leq
			2K^2T^2 \bigg\{1+C(\mu,p,K,T)\Big(\|\u_0\|_\H^4+C\|\f\|_{\mathrm{L}^{p'}(0,T;\V_p')}^{p'}\Big)\bigg\}\\&<\infty.
		\end{align}

	One should note that the right hand side of the inequalities \eqref{EE22} and \eqref{EE220} are independent of $n$ and thus we can extract a sequence $\{\boldsymbol{\sigma}_k^{n_j}(\cdot,\u_{n_j}(\cdot))\}$ of $\{\boldsymbol{\sigma}_k^{n}(\cdot,\u_{n}(\cdot))\}$ such that (for convenience we still denoting by the same symbol $\{\boldsymbol{\sigma}_k^{n}(\cdot,\u_{n}(\cdot))\}$)
		\begin{align}\label{EE23}
			\boldsymbol{\sigma}^n(\cdot,\u_n) \xrightarrow{w} \boldsymbol{\sigma}(\cdot), \quad \text{in} \quad \mathrm{L}^4(\Omega;\mathrm{L}^2(0,T;\mathcal{L}_2(\ell^2;\H))).
		\end{align}

		\vspace{2mm}
		\noindent
		\textbf{Step 3:} \emph{It\^o stochastic differential satisfied by $\u(\cdot)$:} In this step, we obtain the It\^o stochastic differential satisfied by $\u(\cdot)$ using \cite[Theorem 7.5]{chow}. We cannot directly use \cite[Thereom 7.5]{chow} as we have the weak convergence of the nonlinear term in the following space $$\mathrm{L}^{p'}(\Omega;\mathrm{L}^{p'}(0,T;\V_p'))+ \mathrm{L}^{\frac{r}{r-1}}(\Omega;\mathrm{L}^{\frac{r}{r-1}}(0,T;\widetilde{\L}^{\frac{r}{r-1}})).$$ Due to these technical difficulties, we extend our time interval $[0,T]$ to the open interval $(\e,T+\e)$ with $\e>0$, and set the terms in the system \eqref{EE18} equal to zero outside the interval $[0,T]$. Choose $\phi\in\mathrm{H}^1(-\mu,T+\mu)$ such that $\phi(0)=1$. For $\v_m\in\V_p\cap\widetilde{\L}^{r+1}$, we define $\v_m(t)=\phi(t)\v_m$. Applying the It\^o's formula to the process $\big(\u_n(t),\v_m(t)\big)$, we find
		\begin{align}\label{EE24}\nonumber
			\big(\u_n(T),\v_m(T)\big)&=\big(\u_n(0),\v_m\big)+\int_0^T\big(\u_n(t),\dot{\v}_m(t)\big)\d t-\int_0^T\big(\mathscr{H}_n(\u_n(t),\v_m(t))\big)\d t\\&\quad +\sum_{k=1}^n\int_0^T\big(\boldsymbol{\sigma}^n_k(t,\u_n(t))\d\boldsymbol{W}_k(t),\v_m(t)\big),
		\end{align}where $\dot{\v}_m(t)=\frac{\d\phi(t)}{\d t}\v_m$. We can pass the limit $n\to\infty$, using the weak convergences given in \eqref{EE21} and \eqref{EE23}. For instance, we consider the stochastic integral present in the equality \eqref{EE24}, with $m$ fixed. Let $\mathscr{P}_T$ denote the class of predictable process with the values in the space $\mathrm{L}^2(\Omega;\mathrm{L}^2(0,T;\mathcal{L}_2(\ell^2;\H)))$ with the inner product defined by 
		\begin{align*}
			\big(\boldsymbol{\sigma},\boldsymbol{\xi}\big)_{\mathscr{P}_T} =\E\bigg[\sum_{k=1}^\infty\int_0^T\big(\boldsymbol{\sigma}_k(s),\boldsymbol{\zeta}_k(s)\big)\mathbf{e}_k\d s\bigg], \text{ for all } \boldsymbol{\sigma},\boldsymbol{\xi}\in\mathscr{P}_T.
		\end{align*}Define a map $\Gamma:\mathscr{P}_T\to \mathrm{L}^2(\Omega;\mathrm{L}^2(0,T))$ by $\displaystyle \Gamma(\boldsymbol{\sigma})=\sum\limits_{k=1}^\infty\int_0^t\big(\boldsymbol{\sigma}_k(s)\d \boldsymbol{W}_k(s),\v_m(s)\big)$, for all $t\in[0,T]$ and each $m\in\N$. Note that the mapping $\Gamma(\cdot)$ is linear and continuous. The weak convergence from \eqref{EE23} implies that \begin{align*}
			\sum_{k=1}^n\big(\boldsymbol{\sigma}_k^n(t,\u_n)\d\boldsymbol{W}_k(s),\boldsymbol{\xi}\big)_{\mathscr{P}_T}\to  \sum_{k=1}^\infty\big(\boldsymbol{\sigma}_k(t)\d\boldsymbol{W}_k(s),\boldsymbol{\xi}\big)_{\mathscr{P}_T}, \text{ for all } \boldsymbol{\xi}\in\mathcal{P}_T, \text{  as }\ n\to\infty.
		\end{align*} From this, we obtain 
		\begin{align*}
			\Gamma(\boldsymbol{\sigma}_k^n(t,\u_n(t)))=\sum_{k=1}^n\int_0^t\big(\boldsymbol{\sigma}_k^n(t,\u_n(t))\d\boldsymbol{W}_k(s),\v_m(s)\big)\to \sum_{k=1}^\infty\int_0^t\big(\boldsymbol{\sigma}_k(s)\d\boldsymbol{W}_k(s),\v_m(s)\big),
		\end{align*}as $n\to\infty$, for all $t\in[0,T]$ and for each $m$. Passing the limit $n\to\infty$ term wise in \eqref{EE24}, we find 
		\begin{align*}
			\big(\boldsymbol{\xi},\v_m\big)\phi(T)&=\big(\u_0,\v_m\big)+\int_0^T\big(\u(t),\dot{\v}_m\big)\d t-\int_0^T\phi(t)\big\langle\mathscr{H}_0(t),\v_m\big\rangle\d t\\&\quad+ \sum_{k=1}^\infty\int_0^T\phi(t)\big(\boldsymbol{\sigma}_k(t)\d\boldsymbol{W}_k(t),\v_m\big).
		\end{align*}
	
	Let us consider a sequence $\{\phi_k\}_{k\in\N}\in\mathrm{H}^1(-\mu,T+\mu)$ with $\phi_k(0)=1$, for all $k\in\N$ such that $\phi\to\chi_t$ and the derivative of $\phi_k$ with respect to time converges to $\delta_t$ (Dirac $\delta$-distribution), where $\chi_t(s)=1$, if $s\leq t$, and $0$, otherwise. Substituting $\phi_k$ in the place of $\phi$ in \eqref{EE24} and then passing $k\to\infty$, we obtain 
		\begin{align}\label{EE25}
			\big(\u(t),\v_m\big)	&=\big(\u_0,\v_m\big)-\int_0^T\big\langle\mathscr{H}_0(t),\v_m\big\rangle\d t+  \sum_{k=1}^\infty\int_0^T\big(\boldsymbol{\sigma}_k(t)\d\boldsymbol{W}_k(t),\v_m\big),
		\end{align}for all $t\in(0,T)$ with $\big(\u(T),\v_m\big)=\big(\boldsymbol{\xi},\v_m\big)$ and for any $\v_m\in\V_p\cap \widetilde{\L}^r$. Note that $\mathscr{V}\subset \V_p\cap \widetilde{\L}^r\subset\H$ and $\mathscr{V}$ is dense in $\H$. Therefore, $\mathscr{V}\subset \V_p\cap \widetilde{\L}^r$ is dense in $\H$ and the equation \eqref{EE25} holds for any $\v\in \mathscr{V}\subset \V_p\cap \widetilde{\L}^r$. Thus, we have 
		\begin{align}\label{EE26}
			\big(\u(t),\v\big)	&=\big(\u_0,\v\big)-\int_0^T\big\langle\mathscr{H}_0(t),\v\big\rangle\d t+ \sum_{k=1}^\infty\int_0^T\big(\boldsymbol{\sigma}_k(t)\d\boldsymbol{W}_k(t),\v\big),
		\end{align}$\mathbb{P}$-a.s., for all $t\in(0,T)$ with $	\big(\u(T),\v\big)=\big(\boldsymbol{\xi},\v\big)$, for all $\v \in\V_p\cap \widetilde{\L}^r$. Hence, $\u(\cdot)$ satisfies the following differential equation: 
		\begin{equation}\label{EE27}
			\left\{
			\begin{aligned}
				\d \u(t)&= -\mathscr{H}_0(t)\d t+\sum_{k=1}^\infty\boldsymbol{\sigma}_k(t)\d \boldsymbol{W}_k(t), \\
				\u_n(0)&=\u_0,
			\end{aligned}
			\right.
		\end{equation}in $\V_p'+\wi{\L}^{\frac{r}{r-1}}$, for $\u_0\in\H$.
		
		\vspace{2mm}
		\noindent
		\textbf{Step 4:} \emph{Energy equality satisfied by $\u(\cdot)$:} In this step, we establish the energy equality satisfied by $\u(\cdot)$. One should note that such an energy equality is not straight forward due to the final convergence in \eqref{EE21} and we cannot use the infinite-dimensional It\^o's formula available in the literature for semimartingales (see \cite[Theorem 1]{GK1}, \cite[Theorem 6.1]{Me}). We borrow the idea from \cite{CLF} (cf. \cite{MTM8}) to obtain an approximation of $\u(\cdot)$ such that the approximations are bounded and converge in both Sobolev and Lebesgue spaces simultaneously (for periodic domains one can see \cite{KWH}). Now, we approximate $\u(t)$, for each $t\in(0,T)$ and $\PP$-a.s., by using the finite-dimensional space spanned by the first $m$-eigenfunctions of the  operator $\mathcal{L}$ (see Section \ref{ACO}) (for more details one can see \cite{CLF,KWH})
		\begin{align}\label{EE28}
			\boldsymbol{u}^m(t):=\mathrm{P}_{1/m}\u(t)=\sum_{\lambda_j<m^2}e^{-\lambda_j/m}\langle \u(t),\be_j\rangle \be_j.
		\end{align}One should note that, the operator $\PA(\cdot)$ is a self-adjoint operator, but not a projection operator. Here, we use $	\boldsymbol{u}^m$ for approximation of the type \eqref{EE28} and $\u_n$ stands for Galerkin approximations in previous steps. Observe that 
		\begin{align}\label{EE29}
			\|	\boldsymbol{u}^m\|_\H^2 =\|\mathrm{P}_{1/m}\u\|_\H^2=\sum_{\lambda_j<m^2}e^{-2\lambda_j /m}\big|\langle \u,\be_j\rangle\big|^2\leq \sum_{j=1}^\infty\big|\langle \u,\be_j\rangle\big|^2=\|\u\|_\H^2<\infty,
		\end{align}for all $\u\in\H$. Moreover, we have 
		\begin{align}\label{EE30}\nonumber
			\|(\I-\P_{1/m})\u\|_\H^2&=\|\u\|_{\H}^2-2\langle \u,	\boldsymbol{u}^m\rangle +\|\boldsymbol{u}^m\|_\H^2\\&\nonumber = \sum_{j=1}^\infty|\langle \u,\be_j\rangle|^2-2\sum_{\lambda_j<m^2}e^{-\lambda_j/m}|\langle \u,\be_j\rangle|^2+\sum_{\lambda_j<m^2}e^{-2\lambda_j/m}|\langle \u,\be_j\rangle|^2\\&=\sum_{\lambda_j<m^2}\big(1-e^{-\lambda_j/m}\big)^2|\langle \u,\be_j\rangle|^2+\sum_{\lambda_j\geq m^2}|\langle \u,\be_j\rangle|^2,
		\end{align}for all $\u\in\H$. Being a tails end of convergent series $\sum\limits_{j=1}^\infty |\langle \u,\be_j\rangle|^2$, the final term in the right hand side of the above inequality goes to $0$, as $m$ approaches to $\infty$. We can bound the first term in the right hand side of the equality \eqref{EE30} in the following manner:
		\begin{align*}
			\sum_{j=1}^\infty\big(1-e^{-\lambda_j/m}\big)^2|\langle \u,\be_j\rangle|^2\leq 4\sum_{j=1}^\infty |\langle \u,\be_j\rangle |^2=4\|\u\|_\H^2<\infty.
		\end{align*}Using the Dominated Convergence Theorem, we can interchange the summation and limit and hence we obtain 
		\begin{align*}
			\lim_{m\to\infty} \sum_{j=1}^\infty \big(1-e^{-\lambda_j/m})^2|\langle \u,\be_j\rangle |^2=\sum_{j=1}^\infty\lim_{m\to\infty}\big(1-e^{-\lambda_j/m})^2|\langle \u,\be_j\rangle |^2=0.
		\end{align*}Using the above convergence in \eqref{EE30}, we procure
	\begin{align}\label{CT}	
		\big\|(\I-\P_{1/m})\u\big\|_\H\to 0, \text{ as } m\to\infty.
	\end{align} 
Let us discuss the properties of the approximation given in \eqref{EE28}.
	The authors in the article \cite{CLF} established that such an approximation satisfies:
		\begin{equation}\label{cond}
			\left\{ \begin{aligned}
			(1) \ &\boldsymbol{u}^m(t)\to\u(t) \text{ in } \H^1 \text{ with } \|\boldsymbol{u}^m(t)\|_{\H^1}\leq C_1\|\u(t)\|_{\H^1},  \text{ for a.e. } t\in[0,T] \\ &\text{ and  } \mathbb{P}\text{-a.s.;}\\
			 (2)\ &\boldsymbol{u}^m(t)\to\u(t) \text{ in } \L^p(\mathcal{O}) \text{ with } \|\boldsymbol{u}^m(t)\|_{\L^p}\leq C_1\|\u(t)\|_{\L^p}, \text{ for any } p\in(1,\infty), \\ &\text{ for a.e. } t\in[0,T] \text{ and } \mathbb{P}\text{-a.s.;}\\
		 (3) \ &\boldsymbol{u}^m(t) \text{ is divergence free and zero on }\partial \mathcal{O}, \text{ for a.e. }t\in[0,T] \text{ and } \mathbb{P}\text{-a.s.}
		\end{aligned}
			\right.
		\end{equation}In the points (1) and (2) of \eqref{cond}, the constant $C_1$ is an absolute constant. Note that for $2\leq d\leq 4$ and $s>2$, $\D(\mathcal{L})\subset\mathcal{V}^s\subset  \H^2\subset \L^p$, for all $p\in(1,\infty)$ (cf. \cite{CLF}). We know that $\be_j$'s are the eigenvectors of the operator $\mathcal{L}(\cdot)$, therefore $\be_j\in \D(\mathcal{L})\subset \V$ and $\be_j\in\D(\mathcal{L})\subset \widetilde{\L}^r$.	It remains to show the following strong convergence 
	\begin{align}\label{EE39}
		\|	\boldsymbol{u}^m-\u\|_{\mathrm{L}^r(\Omega;\mathrm{L}^r(0,T;\widetilde{\L}^r))}\to 0,\ \text{ as }\ m\to\infty,
	\end{align}which follows from (3) of \eqref{cond}. Since $\u\in \mathrm{L}^r(\Omega;\mathrm{L}^r(0,T;\widetilde{\L}^r)$ and the fact that $\|\u^m(t,\omega)-\u(t,\omega)\|_{\widetilde{\L}^r}\to 0$, for a.e., $t\in[0,T]$ and a.e. $\omega\in\Omega$, one can establish the convergence \eqref{EE39} with the help of Dominated Convergence Theorem. Moreover using the fact that $\u\in \mathrm{L}^p(\Omega;\mathrm{L}^p(0,T;\V_p))$, we have the following convergence: 
\begin{align}\label{EE390}
	\|\boldsymbol{u}^m-\u\|_{\mathrm{L}^p(0,T;\V_p)}\to 0, \text{ as } m\to\infty, \ \PP\text{-a.s.,}
\end{align}
	
	The proof of the energy equality is carried out following the methodology outlined in \cite{NVK}. 
	Fixing $\v=\be_j$ in \eqref{EE26}, multiplying $e^{-\lambda_j/m}\be_j$ and then taking sum over all $j$ such that $\lambda_j<m^2$, we observe that $\boldsymbol{u}^m(\cdot)$ satisfies the following It\^o stochastic differential, for all $t\in [0,\infty)$, $\PP$-a.s,
		\begin{align}\label{EE32}
		\boldsymbol{u}^m(t,x)=\boldsymbol{u}^m_0(x)-\int_0^t\PA\mathscr{H}_{0}(s,x)\d s+\sum_{k=1}^\infty\int_0^t\PA\boldsymbol{\sigma}_{k}(s,x)\d\boldsymbol{W}_k(s),
		\end{align}for all $k\in\N$, so that $\boldsymbol{u}^m(\cdot,\cdot)$ is a smooth function in the second variable.   The equation \eqref{EE32} has a unique solution $\boldsymbol{u}^m(\cdot)$ (see \cite{DaZ}). Using the finite-dimensional It\^o formula, we obtain for each $x\in \mathcal{O}$
		\begin{align}\label{EE33}\nonumber
			|\boldsymbol{u}^m(t,x)|^2&=|\boldsymbol{u}^m_0(x)|^2-2\int_0^t\PA\mathscr{H}_0(s,x)\cdot \boldsymbol{u}^m(s,x)\d s\\&\quad +2\sum_{k=1}^\infty\int_0^t\PA\boldsymbol{\sigma}_{k}(s,x)\d\boldsymbol{W}_k(s)\cdot\boldsymbol{u}^m(s,x)\\&\quad +\sum_{k=1}^\infty\int_0^t\Bigg[\sum_{\lambda_j<m^2}e^{-\lambda_j/m}\langle\boldsymbol{\sigma}_k(s),\be_j\rangle_{\mathbb{U}'\times\mathbb{U}} \be_j(x)\Bigg]^2\d s. 
		\end{align}
	Let us integrate the above expression over $\mathcal{O}$ to find
	\begin{align}\label{EE330}
		\nonumber
		\|\boldsymbol{u}^m(t)\|_\H^2&=\|\boldsymbol{u}^m_0\|_\H^2-2\int_{\mathcal{O}}\int_0^t\PA\mathscr{H}_0(s,x)\cdot \boldsymbol{u}^m(s,x)\d s\d x\\&\nonumber\quad +2\int_\mathcal{O}\sum_{k=1}^\infty\int_0^t\PA\boldsymbol{\sigma}_{k}(s,x)\d\boldsymbol{W}_k(s)\cdot\boldsymbol{u}^m(s,x)\d s\d x\\&\quad +\int_\mathcal{O}\sum_{k=1}^\infty\int_0^t\Bigg[\sum_{\lambda_j<m^2}e^{-\lambda_j/m}\langle \boldsymbol{\sigma}_k(s),\be_j\rangle_{\mathbb{U}'\times\mathbb{U}}\be_j(x)\Bigg]^2\d s\d x, \quad \PP\text{-a.s.}
	\end{align}
Our aim is to use both deterministic and stochastic versions of Fubini's theorem. There are no issue concerning the intergral with respect to $\d s$. In order to use stochastic Fubini's theorem, first we have to make sure that the stochastic integral is well-defined. To be more specific, we need to show that the following integral finite $\PP$-a.s.,
\begin{align}\label{EE331}
\int_0^t\sum_{k=1}^\infty\bigg[\int_\mathcal{O}\PA\boldsymbol{\sigma}_{k}(s,x)\cdot\boldsymbol{u}^m(s,x)\d x\bigg]^2\d s<\infty.
\end{align}The computation below show that 
\begin{align}\label{EE332}
	\E\Bigg[\int_0^t\sum_{k=1}^\infty\bigg(\int_\mathcal{O}\PA\boldsymbol{\sigma}_{k}(s,x)\cdot\boldsymbol{u}^m(s,x)\d x\bigg)^2\d s\Bigg]<\infty,
\end{align}and with the help of \eqref{EE332}, we are in position to employ the stochastic Fubini's theorem (see \cite[Lemma 3.3]{GKCTS} or \cite{JVN}). Moreover, $\boldsymbol{u}^m(t,x)$ is infinitely differential in $x$ for any $(\omega,t)$. Therefore, it is $\mathscr{F}_t\otimes \mathcal{B}(\R^+)$-measurable. Since it is also continuous in $t$ for each $(\omega, x)$, the function $\boldsymbol{u}^m(t,x)$ is $\mathscr{F}_t\otimes \mathcal{B}(\R^+)$-measurable and there is no measurability obstructions in applying Fubini's theorems.
	
	We know that $\mathscr{H}_0\in\mathrm{L}^{p'}(\Omega;\mathrm{L}^{p'}(0,T;\V_p'))+ \mathrm{L}^{\frac{r}{r-1}}(\Omega;\mathrm{L}^{\frac{r}{r-1}}(0,T;\widetilde{\L}^{\frac{r}{r-1}}))$, therefore we can write $\mathscr{H}_0$ as $\mathscr{H}_0=\mathscr{H}_0^1+\mathscr{H}_0^2$, where $\mathscr{H}_0^1=  \mathrm{L}^{p'}(\Omega;\mathrm{L}^{p'}(0,T;\V_p'))$, and $\mathscr{H}_0^2\in \mathrm{L}^{\frac{r}{r-1}}(\Omega;\mathrm{L}^{\frac{r}{r-1}}(0,T;\widetilde{\L}^{\frac{r}{r-1}}))$. Now, consider
	\begin{align}\label{EE333}	\nonumber
	&	\int_0^t\int_\mathcal{O} \PA \mathscr{H}_0(s,x)\cdot \boldsymbol{u}^m(s,x)\d x\d s\\&	\nonumber=	\int_0^t\langle  \mathscr{H}_0(s),\PA \boldsymbol{u}^m(s)\rangle \d s\\&\nonumber
	= \int_0^t\langle  \mathscr{H}_0^1(s),\PA \boldsymbol{u}^m(s)\rangle \d s+\int_0^t\langle  \mathscr{H}_0^2(s),\PA \boldsymbol{u}^m(s)\rangle \d s\\&		\nonumber
	\leq  \int_0^t\| \mathscr{H}_0^1(s)\|_{\V_p'}\|\PA \boldsymbol{u}^m(s)\|_{\V_p}\d s+\int_0^t\|  \mathscr{H}_0^2(s)\|_{\wi{\L}^{\frac{r}{r-1}}}\|\PA \boldsymbol{u}^m(s)\|_{\wi{\L}^r}\d s
	\\&		\nonumber
	\leq  C_1 \int_0^t\| \mathscr{H}_0^1(s)\|_{\V_p'}\| \boldsymbol{u}^m(s)\|_{\V_p}\d s+C_1\int_0^t\|  \mathscr{H}_0^2(s)\|_{\wi{\L}^{\frac{r}{r-1}}}\| \boldsymbol{u}^m(s)\|_{\wi{\L}^r}\d s
	\\&		\nonumber
	\leq  C_1^2 \int_0^t\| \mathscr{H}_0^1(s)\|_{\V_p'}\| \u(s)\|_{\V_p}\d s+C_1^2\int_0^t\|  \mathscr{H}_0^2(s)\|_{\wi{\L}^{\frac{r}{r-1}}}\| \u(s)\|_{\wi{\L}^r}\d s
		\\&		\nonumber
	\leq  C_1^2 \bigg(\int_0^t\| \mathscr{H}_0^1(s)\|_{\V_p'}^{p'}\d s\bigg)^{\frac{1}{p'}}\bigg(\int_0^t\| \u(s)\|_{\V_p}^p\d s\bigg)^{\frac{1}{p}}\\&\quad +C_1^2\bigg(\int_0^t\|  \mathscr{H}_0^2(s)\|_{\wi{\L}^{\frac{r}{r-1}}}^{\frac{r}{r-1}}\d s\bigg)^\frac{r-1}{r}\bigg(\int_0^t\| \u(s)\|_{\wi{\L}^r}^r\d s\bigg)^\frac{1}{r}, \quad \PP\text{-a.s.}
	\end{align} Thus, we arrive at
\begin{align}\label{EE334}
	\E\bigg[	\int_0^t\int_\mathcal{O} \PA \mathscr{H}_0(s,x)\cdot \boldsymbol{u}^m(s,x)\d x\d s\bigg]<\infty.
\end{align}Now, we consider 
	\begin{align}\label{EE335}\nonumber
	&\sum_{k=1}^\infty\int_0^t	\int_\mathcal{O}\Bigg[\sum_{\lambda_j<m^2}e^{-\lambda_j/m}\langle \boldsymbol{\sigma}_k(s),\be_j\rangle_{\mathbb{U}'\times\mathbb{U}}\be_j(x)\Bigg]^2\d x\d s\\&\nonumber=
	\sum_{k=1}^\infty\int_0^t	\Bigg[\sum_{\lambda_j<m^2}e^{-2\lambda_j/m}\big|\langle \boldsymbol{\sigma}_k(s),\be_j\rangle_{\mathbb{U}'\times\mathbb{U}}\big|^2\d s\Bigg]
	\\&\nonumber
	\leq 
	\sum_{k=1}^\infty\int_0^t \sum_{j=1}^\infty	\big|\langle\boldsymbol{\sigma}_k(s),\be_j\rangle_{\mathbb{U}'\times\mathbb{U}}\big|^2\d s 
	\\&=\sum_{k=1}^\infty\int_0^t \|\boldsymbol{\sigma}_k(s)\|_\H^2\d s=\int_0^t\|\boldsymbol{\sigma}(s)\|_{\mathcal{L}_2(\ell^2;\H)}^2\d s,
	\end{align}and using the fact \eqref{3p3}, implies
\begin{align}\label{EE336}
	\E\Bigg[\sum_{k=1}^\infty \int_0^t\int_\mathcal{O}\Bigg[\sum_{\lambda_j<m^2}e^{-\lambda_j/m}\langle \boldsymbol{\sigma}_k(s),\be_j\rangle_{\mathbb{U}'\times\mathbb{U}}\be_j(x)\Bigg]^2\d x\d s\Bigg]<\infty.
\end{align}An application of the Cauchy-Schwarz and H\"older's inequalities yield
\begin{align}\label{EE337}\nonumber
&	\E\Bigg[\int_0^t \sum_{k=1}^\infty\bigg(\int_\mathcal{O} \PA \boldsymbol{\sigma}_k(s,x)\cdot\boldsymbol{u}^m(s,x)\d x \bigg)^2\d s\Bigg]
\\&\nonumber =\E\Bigg[\int_0^t\sum_{k=1}^\infty \Big[\big(\boldsymbol{\sigma}_k(s),\PA\boldsymbol{u}^m(s)\big)\Big]^2\d s \Bigg]
\\&\nonumber \leq \E\Bigg[\int_0^t\sum_{k=1}^\infty \|\boldsymbol{\sigma}_k(s)\|_\H^2\|\PA\boldsymbol{u}^m(s)\|_\H^2\d s \Bigg] \\&\nonumber\leq  C_1^2
\E\Bigg[\sup_{s\in[0,t]}\|\u(s)\|_\H^2\int_0^t\sum_{k=1}^\infty \|\boldsymbol{\sigma}_k(s)\|_\H^2\d s \Bigg] 
\\&\nonumber\leq C_1^2
\bigg(
\E\bigg[\sup_{s\in[0,t]}\|\u(s)\|_\H^4\bigg]\bigg)^\frac{1}{2}\bigg(\E\bigg[\bigg(\int_0^t\sum_{k=1}^\infty \|\boldsymbol{\sigma}_k(s)\|_\H^2\d s \bigg)^2\bigg] \bigg)^\frac{1}{2}
\\&\nonumber\leq  C_1^2
\bigg(
\E\bigg[\sup_{s\in[0,t]}\|\u(s)\|_\H^4\bigg]\bigg)^\frac{1}{2}\bigg(\E\bigg[\bigg(\int_0^t\|\boldsymbol{\sigma}(s)\|_{\mathcal{L}_2(\ell^2;\H)}^2\d s \bigg)^2\bigg] \bigg)^\frac{1}{2}
\\&<\infty,
\end{align}where we have used the fact that $\u\in \mathrm{L}^4(\Omega;\mathrm{L}^\infty(0,T;\H))$ and $\boldsymbol{\sigma}\in \mathrm{L}^4(\Omega;\mathrm{L}^2(0,T;\mathcal{L}_2(\ell^2;\H)))$.

Using deterministic and stochastic versions of Fubini's theorem, we procure  from \eqref{EE330} that for all $t\in [0,\infty)$, $\PP$-a.s.,

		\begin{align}\label{EE338}
		\nonumber
		\|\boldsymbol{u}^m(t)\|_\H^2&=\|\boldsymbol{u}^m_0\|_\H^2-2\int_{\mathcal{O}}\int_0^t\big(\PA\mathscr{H}_0(s,x)\cdot \boldsymbol{u}^m(s,x)\big)\d s\d x\\&\nonumber\quad +2\int_\mathcal{O}\sum_{k=1}^\infty\int_0^t\PA\boldsymbol{\sigma}_{k}(s,x)\d\boldsymbol{W}_k(s)\cdot\boldsymbol{u}^m(s,x)\d s\d x\\&\nonumber\quad +\sum_{k=1}^\infty\int_\mathcal{O}\int_0^t\Bigg[\sum_{\lambda_j<m^2}e^{-\lambda_j/m}\langle \boldsymbol{\sigma}_k(s),\be_j\rangle_{\mathbb{U}'\times\mathbb{U}}\be_j(x)\Bigg]^2\d s\d x
		\\&\nonumber=\|\boldsymbol{u}^m_0\|_\H^2-\int_0^t\big(\PA\mathscr{H}_0(s), \boldsymbol{u}^m(s)\big)\d s +\sum_{k=1}^\infty\int_0^t\big(\PA\boldsymbol{\sigma}_{k}(s)\d\boldsymbol{W}_k(s),\boldsymbol{u}^m(s)\big)\d s\\&\nonumber\quad +\sum_{k=1}^\infty\int_0^t\sum_{\lambda_j<m^2}e^{-2\lambda_j/m}\big|\langle\boldsymbol{\sigma}_k(s),\be_j\rangle_{\mathbb{U}'\times\mathbb{U}}\big|^2\d s
		\\&\nonumber=\|\boldsymbol{u}^m_0\|_\H^2-\int_0^t\big(\mathscr{H}_0(s), \PA\boldsymbol{u}^m(s)\big)\d s +\sum_{k=1}^\infty\int_0^t\big(\boldsymbol{\sigma}_{k}(s)\d\boldsymbol{W}_k(s),\PA\boldsymbol{u}^m(s)\big)\d s\\&\quad +\int_0^t\sum_{k=1}^\infty\sum_{\lambda_j<m^2}e^{-2\lambda_j/m}\big|\langle \boldsymbol{\sigma}_k(s),\be_j\rangle_{\mathbb{U}'\times\mathbb{U}}\big|^2\d s.
	\end{align} Since the convergence  \eqref{CT} holds true for all $t\in[0,T]$, we find 
\begin{align}\label{EE339}
	\lim_{m\to \infty} \|\boldsymbol{u}^m(t)\|_\H^2=\|\u(t)\|_\H^2, \ \text{ and } \ 	\lim_{m\to \infty} \|\boldsymbol{u}^m_0\|_\H^2=\|\u_0\|_\H^2, \quad \PP\text{-a.s.}
\end{align}

For $\mathscr{H}_0\in\mathrm{L}^{p'}(\Omega;\mathrm{L}^{p'}(0,T;\V_p'))+ \mathrm{L}^{\frac{r}{r-1}}(\Omega;\mathrm{L}^{\frac{r}{r-1}}(0,T;\widetilde{\L}^{\frac{r}{r-1}}))$, where $\mathscr{H}_0^1=  \mathrm{L}^{p'}(\Omega;\mathrm{L}^{p'}(0,T;\V_p'))$, and $\mathscr{H}_0^2\in \mathrm{L}^{\frac{r}{r-1}}(\Omega;\mathrm{L}^{\frac{r}{r-1}}(0,T;\widetilde{\L}^{\frac{r}{r-1}}))$. First, we consider
\begin{align}\label{EE340}\nonumber
&	\lim_{m\to\infty} \bigg|\int_0^t \langle \mathscr{H}_0(s), \PA\boldsymbol{u}^m(s)\rangle \d s-\int_0^t\langle \mathscr{H}_0(s),\u(s)\rangle \d s\bigg|\\& \nonumber
	=	\lim_{m\to\infty} \bigg|\int_0^t \langle \mathscr{H}_0^1(s)+\mathscr{H}_0^2(s), \PA(\boldsymbol{u}^m(s)-\u(s))+\boldsymbol{u}^m(s)-\u(s)\rangle \d s\bigg|
	\\& \nonumber
	\leq 	\lim_{m\to\infty} \bigg|\int_0^t \langle \mathscr{H}_0^1(s), \PA(\boldsymbol{u}^m(s)-\u(s))+\boldsymbol{u}^m(s)-\u(s)\rangle \d s\bigg|\\&\nonumber\quad +\lim_{m\to\infty} \bigg|\int_0^t \langle \mathscr{H}_0^2(s), \PA(\boldsymbol{u}^m(s)-\u(s))+\boldsymbol{u}^m-\u(s)\rangle \d s\bigg| \\&\nonumber
	\leq 
	\lim_{m\to\infty}  \|\mathscr{H}_0^1\|_{\mathrm{L}^{p'}(0,T;\V_{p}')}\big(\|\PA(\boldsymbol{u}^m-\u)\|_{\mathrm{L}^p(0,T;\V_p)}+\|\boldsymbol{u}^m-\u\|_{\mathrm{L}^p(0,T;\V_p)}\big)\\&\nonumber\quad +
	\lim_{m\to\infty}  \|\mathscr{H}_0^2\|_{\mathrm{L}^{\frac{r}{r-1}}(0,T;\wi{\L}^{\frac{r}{r-1}})}\big(\|\PA(\boldsymbol{u}^m-\u)\|_{\mathrm{L}^r(0,T;\wi{\L}^r)}+\|\boldsymbol{u}^m-\u\|_{\mathrm{L}^r(0,T;\wi{\L}^r)}\big)
	 \\&\nonumber
	\leq 
	\lim_{m\to\infty}  \|\mathscr{H}_0^1\|_{\mathrm{L}^{p'}(0,T;\V_{p}')}\big(C_1\|\boldsymbol{u}^m-\u\|_{\mathrm{L}^p(0,T;\V_p)}+\|\boldsymbol{u}^m-\u\|_{\mathrm{L}^p(0,T;\V_p)}\big)\\&\nonumber\quad +
	\lim_{m\to\infty}  \|\mathscr{H}_0^2\|_{\mathrm{L}^{\frac{r}{r-1}}(0,T;\wi{\L}^{\frac{r}{r-1}})}\big(C_1\|\boldsymbol{u}^m-\u\|_{\mathrm{L}^r(0,T;\wi{\L}^r)}+\|\boldsymbol{u}^m-\u\|_{\mathrm{L}^r(0,T;\wi{\L}^r)}\big)\\&
	=0, \ \PP \text{-a.s.,}
\end{align}where we have used the convergences given in \eqref{EE39} and \eqref{EE390}. Now, we consider
\begin{align}\label{EE341}\nonumber
&\int_0^t\|\boldsymbol{\sigma}(s)\|_{\mathcal{L}_2(\ell^2;\H)}^2\d s- \int_0^t \sum_{k=1}^\infty \sum_{\lambda_j<m^2}e^{-2\lambda_j/m}\big|\langle\boldsymbol{\sigma}_k(s),\be_j\rangle_{\mathbb{U}'\times\mathbb{U}}\big|^2\d s
\\&\nonumber =
\int_0^t\sum_{k=1}^\infty \sum_{j=1}^\infty \big|\langle\boldsymbol{\sigma}_k(s), \be_j\rangle_{\mathbb{U}'\times\mathbb{U}}\big|^2\d s-\int_0^t \sum_{k=1}^\infty \sum_{\lambda_j<m^2}e^{-2\lambda_j/m}\big|\langle\boldsymbol{\sigma}_k(s),\be_j\rangle_{\mathbb{U}'\times\mathbb{U}}\big|^2\d s
\\&\nonumber =
\int_0^t\sum_{k=1}^\infty \sum_{\lambda_j<m^2}(1-e^{-2\lambda_j/m}) \big|\langle\boldsymbol{\sigma}_k(s), \be_j\rangle_{\mathbb{U}'\times\mathbb{U}}\big|^2\d s+\int_0^t \sum_{k=1}^\infty \sum_{\lambda_j\geq m^2}e^{-2\lambda_j/m}\big|\langle\boldsymbol{\sigma}_k(s),\be_j\rangle_{\mathbb{U}'\times\mathbb{U}}\big|^2\d s
\\& \leq 
\int_0^t\sum_{k=1}^\infty \sum_{j=1}^\infty(1-e^{-2\lambda_j/m}) \big|\langle\boldsymbol{\sigma}_k(s), \be_j\rangle_{\mathbb{U}'\times\mathbb{U}}\big|^2\d s+\int_0^t \sum_{k=1}^\infty \sum_{\lambda_j\geq m^2}e^{-2\lambda_j/m}\big|\langle\boldsymbol{\sigma}_k(s),\be_j\rangle_{\mathbb{U}'\times\mathbb{U}}\big|^2\d s, 
\end{align}$\PP$-a.s. We observe that 
	\begin{align}\label{EE342}\nonumber
	&	\int_0^t\sum_{k=1}^\infty \sum_{j=1}^\infty(1-e^{-2\lambda_j/m}) \big|\langle\boldsymbol{\sigma}_k(s), \be_j\rangle_{\mathbb{U}'\times\mathbb{U}}\big|^2\d s+\int_0^t \sum_{k=1}^\infty \sum_{\lambda_j\geq m^2}e^{-2\lambda_j/m}\big|\langle\boldsymbol{\sigma}_k(s),\be_j\rangle_{\mathbb{U}'\times\mathbb{U}}\big|^2\d s\\& \leq 
	3\int_0^t\sum_{k=1}^\infty \sum_{j=1}^\infty \big|\langle \boldsymbol{\sigma}_k(s),\be_j\rangle_{\mathbb{U}'\times\mathbb{U}}\big|^2\d s=3\int_0^t\|\boldsymbol{\sigma}(s)\|_{\mathcal{L}_2(\ell^2;\H)}^2\d s<\infty, \ \PP\text{-a.s.}
	\end{align}Using the Lebesgue Dominated Convergence Theorem, we procure
\begin{align}\label{EE343}
	\lim_{m\to\infty} \int_0^t \sum_{k=1}^\infty \sum_{\lambda_j\geq n^2}e^{-2\lambda_j/m}\big|\langle\boldsymbol{\sigma}_k(s),\be_j\rangle_{\mathbb{U}'\times\mathbb{U}}\big|^2\d s=\int_0^t\|\boldsymbol{\sigma}(s)\|_{\mathcal{L}_2(\ell^2;\H)}^2\d s, \ \PP\text{-a.s.}
\end{align}Finally, we consider 
\begin{align}\label{EE344}\nonumber
&	\E\bigg[\bigg(\int_0^T\bigg(\sum_{k=1}^\infty \boldsymbol{\sigma}_k(s)\d\boldsymbol{W}_k(s),\PA\boldsymbol{u}^m(s)-\u(s)\bigg)\bigg)^2\bigg]
\\& \nonumber=\E \bigg[\int_0^T\sum_{k=1}^\infty\big[\big( \boldsymbol{\sigma}_k(s),\PA\boldsymbol{u}^m(s)-\u(s)\big)\big]^2\d s\bigg]
\\& \nonumber\leq \E \bigg[\int_0^T\sum_{k=1}^\infty\| \boldsymbol{\sigma}_k(s)\|_{\H}^2\|\PA\boldsymbol{u}^m(s)-\u(s)\|_\H^2\d s\bigg]
\\&\nonumber =\E \bigg[\int_0^T\sum_{k=1}^\infty\| \boldsymbol{\sigma}_k(s)\|_{\H}^2\|\PA\boldsymbol{u}^m(s)-\boldsymbol{u}^m(s)+\boldsymbol{u}^m(s)-\u(s)\|_\H^2\d s\bigg]
\\&\nonumber \leq 2C_1\E \bigg[\int_0^T\sum_{k=1}^\infty\| \boldsymbol{\sigma}_k(s)\|_{\H}^2\|\boldsymbol{u}^m(s)-\u(s)\|_\H^2\d s\bigg]
\\& \nonumber\leq 2C_1\E \bigg[\int_0^T\| \boldsymbol{\sigma}(s)\|_{\mathcal{L}_2(\ell^2;\H)}^2\|\boldsymbol{u}^m(s)-\u(s)\|_\H^2\d s\bigg]
\\&\nonumber \leq 2C_1\E \bigg[\sup_{s\in[0,t]}\|\boldsymbol{u}^m(s)-\u(s)\|_\H^2\int_0^T\| \boldsymbol{\sigma}(s)\|_{\mathcal{L}_2(\ell^2;\H)}^2\d s\bigg]\\&
\leq 
2C_1 \bigg(\E\bigg[\sup_{s\in[0,t]}\|\boldsymbol{u}^m(s)-\u(s)\|_\H^4\bigg]\bigg)^{\frac{1}{2}}\bigg(\E\bigg[\bigg(\int_0^t\|\boldsymbol{\sigma}(s)\|_{\mathcal{L}_2(\ell^2;\H)}^2\d s\bigg)^2\bigg]\bigg)^\frac{1}{2}.
\end{align}From \eqref{EE23}, we know that the second term $\displaystyle\bigg(\E\bigg[\bigg(\int_0^t\|\boldsymbol{\sigma}(s)\|_{\mathcal{L}_2(\ell^2;\H)}^2\d s\bigg)^2\bigg]\bigg)^\frac{1}{2}$ is finite. 
Now, our goal is to prove that $\displaystyle \E\bigg[\sup_{s\in[0,t]}\|\boldsymbol{u}^m(s)-\u(s)\|_\H^4\bigg]\to0$, as $m\to\infty$. In order to establish this, we employ the Lebesgue-Vitali theorem (Theorem \ref{LVT}). 

Using the convergence  \eqref{CT},   we deduce 
\begin{align*}
	\sup_{s\in[0,t]}\|\boldsymbol{u}^m(s)-\u(s)\|_\H^4\to 0,\ \text{ as }\ m\to\infty, \ \mathbb{P}\text{-a.s.}
\end{align*} From the energy estimate \eqref{EE002} (with $q=4+\xi$ in Proposition \ref{EE_prop}), we know that for $\u_0\in\H$, for some $\xi>0$
\begin{align}\label{EE345}\nonumber
	\E\bigg[\sup_{s\in[0,t]}\|\boldsymbol{u}^m(s)-\u(s)\|_\H^{4+\xi}\bigg]&\leq C \E\bigg[\sup_{s\in[0,t]}\Big\{\|\boldsymbol{u}^m(s)\|_\H^{4+\xi}+\|\u(s)\|_\H^{4+\xi}\Big\}\bigg]\\& \leq C\E\bigg[\sup_{s\in[0,t]}\|\u(s)\|_\H^{4+\xi}\bigg]<\infty.
\end{align} Using Remark \ref{rem4.5}, it is straightforward to see that the uniform integrability condition holds. Thus, we employ the Lebesgue-Vitali theorem (Theorem \ref{LVT}) to ensure the following convergence:
\begin{align*}
\E\bigg[\sup_{s\in[0,t]}\|\boldsymbol{u}^m(s)-\u(s)\|_\H^4\bigg]\to0, \text{ as } m\to\infty,
\end{align*}and hence from \eqref{EE344}, we infer
\begin{align}\label{EE346}
	\E\bigg[\bigg(\int_0^T\bigg(\sum_{k=1}^\infty \boldsymbol{\sigma}_k(s)\d\boldsymbol{W}_k(s),\PA\boldsymbol{u}^m(s)-\u(s)\bigg)\bigg)^2\bigg] \to 0,\ \text{ as }\ m\to\infty,
\end{align}which implies 
\begin{align}\label{EE347}
\lim_{m\to \infty}	\int_0^T\bigg(\sum_{k=1}^\infty\PA \boldsymbol{\sigma}_k(s)\d\boldsymbol{W}_k(s),\boldsymbol{u}^m(s)\bigg)=\int_0^t\bigg(\sum_{k=1}^\infty \boldsymbol{\sigma}_k(s)\d\boldsymbol{W}_k(s),\u(s)\bigg)\bigg], \ \PP\text{-a.s.}
\end{align}Using the convergences \eqref{EE339}, \eqref{EE340}, \eqref{EE343} and \eqref{EE347} in \eqref{EE338}, we procure for all $t\in[0,T]$, $\PP$-a.s.,
 \begin{align}\label{EE348}\nonumber
 	\|\u(t)\|_\H^2&=\|\u_0\|_\H^2-2\int_0^t\langle \mathscr{H}_0(s),\u(s)\rangle \d s+2\sum_{k=1}^\infty \int_0^t \big(\boldsymbol{\sigma}_k(s)\d\boldsymbol{W}_k(s),\u(s)\big)\\&\quad +\sum_{k=1}^\infty\int_0^t\|\boldsymbol{\sigma}_k(s)\|_{\H}^2\d s.
 \end{align}Taking expectation on the both sides of above equality and using the fact that the penultimate term appearing in the right hand side of the above equality is a martingale, we observe
\begin{align}\label{EE349}
	\E\big[	\|\u(t)\|_\H^2\big]&=\|\u_0\|_\H^2-2\E\bigg[\int_0^t\langle \mathscr{H}_0(s),\u(s)\rangle \d s\bigg] +\E\bigg[\sum_{k=1}^\infty\int_0^t\|\boldsymbol{\sigma}_k(s)\|_{\H}^2\d s\bigg].
\end{align}
Finally,	applying the infinite-dimensional It\^o formula to the process $e^{-\rho(\cdot)}\|\u(\cdot)\|_\H^2$, we find for all $t\in[0,T]$
\begin{align}\label{EE68}\nonumber
	\E\big[e^{-\rho(t)}	\|\u(t)\|_\H^2\big]&=\|\u_0\|_\H^2-\E\bigg[\int_0^te^{-\rho(s)}\langle 2\mathscr{H}_0(s)+\rho'(s)\u(s),\u(s)\rangle\d s\bigg]\\&\quad +\E\bigg[\sum_{k=1}^\infty \int_0^te^{-\rho(s)}\|\boldsymbol{\sigma}_k(s)\|_{\H}^2\d s\bigg].
\end{align}  We know that the initial data $\u_n(0)$ converges to $\u_0$ strongly in $\H$, that is,
\begin{align}\label{EE068}
	\lim_{n\to\infty}\|\u_n(0)-\u_0\|_\H^2=0.
\end{align}

		\vspace{2mm}
		\noindent
		\textbf{Step 5:} \emph{Minty-Browder technique and the existence of global strong solution:} In the part, our aim is to identify the limits, that is, we need to prove to the following:
		\begin{align*}
			\mathscr{H}(\u(\cdot))=\mathscr{H}_0(\cdot),\ \text{ and } \ \boldsymbol{\sigma}(\cdot,\u(\cdot))=\boldsymbol{\sigma}(\cdot). 
		\end{align*}For any $\v\in\mathrm{L}^2(\Omega;\mathrm{L}^\infty(0,T;\H_m))$ with $m<n$, we define 
		\begin{align}\label{EE69}
			\rho(t)=\big(2\widetilde{\eta}+L\big)\int_0^t\|\v(s)\|_{\V_p}^{\frac{2p}{2p-d}}\d s, \text{ so that } \rho'(t)=\big(2\widetilde{\eta}+L\big)\|\v(t)\|_{\V_p}^{\frac{2p}{2p-d}}, \text{ a.e.}
		\end{align}From the local monotonicity result \eqref{EE17} established in Lemma \ref{EU_lem1}, we deduce 
		\begin{align}\label{EE70}\nonumber
			&	\E\bigg[\int_0^T e^{-\rho(t)}\Big(2\big\langle \mathscr{H}(\v(t))-\mathscr{H}_n(\u_n(t)),\v(t)-\u_n(t)\big\rangle +\rho'(t)\big(\v(t)-\u_n(t),\v(t)-\u_n(t)\big)\Big)\d t\bigg]\\& \geq 
			\E\bigg[\sum_{k=1}^n\int_0^T e^{-\rho(t)}\|\boldsymbol{\sigma}_k(t,\v(t))-\boldsymbol{\sigma}_k(t,\u_n(t))\|_{\H}^2\d t\bigg],
		\end{align}for $p\geq \frac{d}{2}+1$, $r\geq 2$. Rearranging the terms in \eqref{EE70} and using the energy equality \eqref{EE20}, we obtain
		\begin{align}\label{EE71}\nonumber
			&	\E\bigg[\int_0^T e^{-\rho(t)}\big\langle 2\mathscr{H}(\v(t))+\rho'(t)\v(t),\v(t)-\u_n(t)\big\rangle \d t\bigg]\\&\nonumber \quad- 
			\E\bigg[\sum_{k=1}^n\int_0^Te^{-\rho(t)} \|\boldsymbol{\sigma}_k(t,\v(t))\|_{\H}^2\d t\bigg]+2\E\bigg[\sum_{k=1}^n\int_0^Te^{-\rho(t)}\big(\boldsymbol{\sigma}_k(t,\v(t)),\boldsymbol{\sigma}_k(t,\u_n(t))\big)\d t\bigg]\\&\nonumber\geq 
			\E\bigg[\int_0^Te^{-\rho(t)}\big\langle 2\mathscr{H}_n(\u_n(t))+\rho'(t)\u_n(t),\v(t)\big\rangle \d t\bigg]\\&\nonumber\quad -\E\bigg[\int_0^Te^{-\rho(t)}\big\langle 2\mathscr{H}_n(\u_n(t))+\rho'(t)\u_n(t),\u_n(t)\big\rangle \d t\bigg]+
			\E\bigg[\sum_{k=1}^n\int_0^Te^{-\rho(t)} \|\boldsymbol{\sigma}_k(t,\u_n(t))\|_{\H}^2\d t\bigg]\\&= 
			\E\bigg[\int_0^Te^{-\rho(t)}\big\langle 2\mathscr{H}_n(\u_n(t))+\rho'(t)\u_n(t),\v(t)\big\rangle \d t\bigg]+\E\Big[e^{-\rho(t)}\|\u_n(t)\|_\H^2-\|\u_n(0)\|_\H^2\Big]
			.
		\end{align}Let us now discuss the convergence of the terms related to the noise coefficient. From the weak convergence given in \eqref{EE23} and the Lebesgue Dominated Convergence Theorem, we get
		\begin{align}\label{EE72}\nonumber
			&	\E\bigg[\sum_{k=1}^n\int_0^Te^{-\rho(t)}\Big(2\big(\boldsymbol{\sigma}_k(t,\v(t)),\boldsymbol{\sigma}_k(t,\u_n(t))\big)-\|\boldsymbol{\sigma}_k(t,\v(t))\|_{\H}^2\Big)\d t\bigg]\\&\to
			\E\bigg[\sum_{k=1}^\infty\int_0^Te^{-\rho(t)}\Big(2\big(\boldsymbol{\sigma}_k(t,\v(t)),\boldsymbol{\sigma}_k(t)\big)-\|\boldsymbol{\sigma}_k(t,\v(t))\|_{\H}^2\Big)\d t\bigg], \ \text{ as }\ n\to\infty.
		\end{align}Taking liminf on both sides of \eqref{EE71} and using \eqref{EE72} in the resultant, we find 
		\begin{align}\label{EE73}\nonumber
			&	\E\bigg[\int_0^T e^{-\rho(t)}\big\langle 2\mathscr{H}(\v(t))+\rho'(t)\v(t),\v(t)-\u(t)\big\rangle \d t\bigg]\\&\nonumber\nonumber \qquad- 
			\E\bigg[\sum_{k=1}^\infty\int_0^Te^{-\rho(t)} \|\boldsymbol{\sigma}_k(t,\v(t))\|_{\H}^2\d t\bigg]+2\E\bigg[\sum_{k=1}^\infty\int_0^Te^{-\rho(t)}\big(\boldsymbol{\sigma}_k(t,\v(t)),\boldsymbol{\sigma}_k(t)\big)\d t\bigg]\\&\geq 
			\E\bigg[\int_0^Te^{-\rho(t)}\big\langle 2\mathscr{H}_0(t)+\rho'(t)\u(t),\v(t)\big\rangle \d t\bigg]+\lim\inf_{n\to\infty}\E\Big[e^{-\rho(T)}\|\u_n(T)\|_\H^2-\|\u_n(0)\|_\H^2\Big]. 
		\end{align}Using the lower semicontinuity property of the $\H$-norm and the strong convergence given in \eqref{EE068}, the second term on the right hand side of the above inequality satisfies:
		\begin{align}\label{EE74}
			\liminf_{n\to\infty}\E\Big[e^{-\rho(T)}\|\u_n(T)\|_\H^2-\|\u_n(0)\|_\H^2\Big]\geq \E\Big[e^{-\rho(T)}\|\u(T)\|_\H^2-\|\u(0)\|_\H^2	\Big].
		\end{align}Using the energy equality \eqref{EE68} and \eqref{EE74} in \eqref{EE73}, we obtain 
		\begin{align}\label{EE75}\nonumber
			&	\E\bigg[\int_0^T e^{-\rho(t)}\big\langle 2\mathscr{H}(\v(t))+\rho'(t)\v(t),\v(t)-\u(t)\big\rangle \d t\bigg]\\&\nonumber\nonumber \geq 
			\E\bigg[\sum_{k=1}^\infty\int_0^Te^{-\rho(t)} \|\boldsymbol{\sigma}_k(t,\v(t))\|_{\H}^2\bigg]-2\E\bigg[\sum_{k=1}^\infty\int_0^Te^{-\rho(t)}\big(\boldsymbol{\sigma}_k(t,\v(t)),\boldsymbol{\sigma}_k(t)\big)\d t\bigg]\\&\quad + 
			\E\bigg[\sum_{k=1}^\infty\int_0^Te^{-\rho(t)} \|\boldsymbol{\sigma}_k(t)\|_{\H}^2 \d t\bigg]+	\E\bigg[\int_0^Te^{-\rho(t)}\big\langle 2\mathscr{H}_0(t)+\rho'(t)\u(t),\v(t)-\u(t)\big\rangle \d t\bigg]. 
		\end{align}Rearranging the terms in  \eqref{EE75}, we find
		\begin{align}\label{EE76}\nonumber
			&	\E\bigg[\int_0^T e^{-\rho(t)}\big\langle 2\big(\mathscr{H}(\v(t))-\mathscr{H}_0(t)\big)+\rho'(t)\big(\v(t)-\u(t)\big),\v(t)-\u(t)\big\rangle \d t\bigg] \\&\geq 
			\E\bigg[\sum_{k=1}^\infty\int_0^T e^{-\rho(t)}\|\boldsymbol{\sigma}_k(t,\v(t))-\boldsymbol{\sigma}_k(t)\|_{\H}^2\d t\bigg]\geq 0. 
		\end{align}The above estimate is valid for all $\v\in\mathrm{L}^2(\Omega;\mathrm{L}^\infty(0,T;\H_m))$ and for each $m\in\N$, since the above estimate is independent of $m$ and $n$. Using a density argument, the estimate \eqref{EE76} remains valid for any 
		\begin{align*}
			\v\in \mathrm{L}^2(\Omega;\mathrm{L}^\infty(0,T;\H))\cap \mathrm{L}^p(\Omega;\mathrm{L}^p(0,T;\V_p))\cap \mathrm{L}^{r}(\Omega;\mathrm{L}^r(0,T;\widetilde{\L}^r))=:\mathscr{G}.
		\end{align*}Indeed for any $\v\in\mathscr{G}$, there exists a strongly convergent subsequence $\v_m\in\mathscr{G}$, which satisfies  \eqref{EE76}. Considering $\v(\cdot)=\u(\cdot)$ in \eqref{EE76} immediately gives $\boldsymbol{\sigma}_k(\cdot,\v(\cdot))=\boldsymbol{\sigma}_k(\cdot)$, for all $k\in\N$. Next, we choose $\v(\cdot)=\u(\cdot)+\lambda\w(\cdot), \ \lambda>0$, where $\w\in \mathrm{L}^\infty(\Omega;\mathrm{L}^\infty(0,T;\H))$ and substituting it in \eqref{EE76}, we get 
		\begin{align}\label{EE77}
			\E\bigg[\int_0^T e^{-\rho(t)}\big\langle 2\mathscr{H}(\u(t)+\lambda\w(t))-2\mathscr{H}_0(t)+\rho'(t)\lambda\w(t),\lambda\w(t)\big\rangle \d t \bigg]\geq 0.
		\end{align}Dividing the above inequality by $\lambda$ and using the hemicontinuity property $\mathscr{H}(\cdot)$, and letting $\lambda\to0$, we get 
		\begin{align}\label{EE78}
			\E\bigg[\int_0^T e^{-\rho(t)}\big\langle 2\mathscr{H}(\u(t))-2\mathscr{H}_0(t),\w(t)\big\rangle \d t \bigg]\geq 0.
		\end{align}The final term in \eqref{EE77} goes to $0$ as $\lambda \to\infty$, since 
		\begin{align*}
			\E\bigg[\int_0^Te^{-\rho(t)}\rho'(t)\big(\w(t),\w(t)\big)\d t\bigg]&=\big(2\widetilde{\eta}+L\big)\E\bigg[\int_0^Te^{-\rho(t)}\|\v(t)\|_{\V_p}^{\frac{2p}{2p-d}}\|\w(t)\|_\H^2\d t\bigg] \\&\leq 
			\big(2\widetilde{\eta}+L\big)\esssup_{\omega\in\Omega}\sup_{t\in[0,T]}\|\w(t)\|_\H^2\E\bigg[\int_0^Te^{-\rho(t)}\|\v(t)\|_{\V_p}^{\frac{2p}{2p-d}}\d t\bigg] \\&<\infty,
		\end{align*}for $p\geq \frac{d}{2}+1$. Thus, from \eqref{EE78}, we obtain $\mathscr{H}(\u(t))=\mathscr{H}_0(t)$ and hence $\u(\cdot)$ is a strong solution to the system \eqref{32} and $\u\in\mathscr{G}$.  It is clear from \eqref{EE68} that the $\mathscr{F}_t$-adapted paths of $\u(\cdot)$ are continuous with trajectories in $\C([0,T];\H),\ \PP$-a.s.

		\vspace{2mm}
		\noindent
		\textbf{Step 5:} \emph{Uniqueness:}
		Let us prove the uniqueness of the strong solution to the system \eqref{32}. Let $\u_1(\cdot)$ and $\u_2(\cdot)$ be ant two strong solutions to the system \eqref{32}. For $N\in\N$, we define
		\begin{align*}
			\tau_N^1=\inf_{t\in[0,T]}\big\{t:\|\u_1(t)\|_\H\geq N \big\}, \ \tau_N^2=\inf_{t\in[0,T]}\big\{t:\|\u_2(t)\|_\H\geq N \big\}, \text{ and } \tau_N:=\tau_N^1\wedge \tau_N^2.
		\end{align*}One can prove that $\tau_N\to T$, as $N\to\infty, \ \PP$-a.s. Set $\w(\cdot)=\u_1(\cdot)-\u_2(\cdot)$ and $\boldsymbol{\widetilde{\sigma}}_k(\cdot)=\boldsymbol{\sigma}_k(\cdot,\u_1(\cdot))-\boldsymbol{\sigma}_k(\cdot,\u_2(\cdot))$, for $k\in\N$. Then, $\w(\cdot)$ satisfies the following system:
		\begin{equation}\label{EE79}
			\left\{
			\begin{aligned}
				\d\w(t)&= -\big[\mu\big(\mathcal{A}(\u_1(t))-\mathcal{A}(\u_2(t))\big)+\B(\u_1(t))-\B(\u_2(t))+\beta\big(\mathcal{C}(\u_1(t))-\mathcal{C}(\u_2(t))\big)\big]\d t\\&\quad + \sum_{k=1}^\infty\boldsymbol{\widetilde{\sigma}}_k(t)\d\boldsymbol{W}_k(t),
				\\ 
				\w(0)&=\w_0.
			\end{aligned}
			\right.
		\end{equation}An application of the infinite-dimensional It\^o's formula to the process $e^{-\rho(\cdot)}\|\w(\cdot)\|_\H^2$, where 
		\begin{align*}
			\rho(t)=\big(2\widetilde{\eta}+L\big)\int_0^t\|\u_2(s)\|_{\V_p}^{\frac{2p}{2p-d}}\d s, \text{ so that } \rho'(t)=\big(2\widetilde{\eta}+L\big)\|\u_2(t)\|_{\V_p}^{\frac{2p}{2p-d}}, \text{ a.e.,}
		\end{align*}yields for all $t\in[0,T]$, $\PP$-a.s.,
		\begin{align}\label{EE80}\nonumber
			e^{-\rho(t\wedge \tau_N)}\|\w(t\wedge \tau_N)\|_\H^2&=\|\w_0\|_\H^2-2\int_0^{t\wedge \tau_N}e^{-\rho(s)}\big\langle \mu\big(\mathcal{A}(\u_1(t))-\mathcal{A}(\u_2(t))\big)\\&\nonumber\qquad +\B(\u_1(t))-\B(\u_2(t))+\beta\big(\mathcal{C}(\u_1(t))-\mathcal{C}(\u_2(t))\big),\w(s)\big\rangle\d s\\&\nonumber\quad -\int_0^{t\wedge \tau_N}e^{-\rho(s)}\rho'(s)\|\w(s)\|_\H^2\d s+\sum_{k=1}^\infty\int_0^{t\wedge \tau_N}e^{-\rho(s)}\|\boldsymbol{\widetilde{\sigma}}_k(s)\|_{\H}^2\d s\\&\quad + 2\sum_{k=1}^\infty\int_0^{t\wedge \tau_N}e^{-\rho(s)}\big(\boldsymbol{\widetilde{\sigma}}_k(s)\d\boldsymbol{W}_k(s),\w(s)\big).
		\end{align}Using \eqref{mon1} in \eqref{EE80}, we obtain for all $t\in[0,T]$, $\PP$-a.s.,
		\begin{align}\label{EE81}
			\nonumber
			e^{-\rho(t\wedge \tau_N)}\|\w(t\wedge \tau_N)\|_\H^2&=\|\w_0\|_\H^2+\sum_{k=1}^\infty\int_0^{t\wedge \tau_N}e^{-\rho(s)}\|\boldsymbol{\widetilde{\sigma}}_k(s)\|_{\H}^2\d s\\&\quad + 2\sum_{k=1}^\infty\int_0^{t\wedge \tau_N}e^{-\rho(s)}\big(\boldsymbol{\widetilde{\sigma}}_k(s)\d\boldsymbol{W}_k(s),\w(s)\big).
		\end{align}Taking expectation \eqref{EE81}, using the fact that the final term in the right hand side of the inequality \eqref{EE81} is a martingale and using the Hypothesis  \ref{hyp}, we arrive at 
		\begin{align}\label{EE82}
			\E\Big[e^{-\rho(t\wedge \tau_N)}\|\w(t\wedge \tau_N)\|_\H^2\Big]&=\|\w_0\|_\H^2+L \E\bigg[\int_0^{t\wedge \tau_N}e^{-\rho(s)}\|\w(s)\|_{\H}^2\d s\bigg].
		\end{align}An application of Gronwall's inequality in \eqref{EE82} yields 
		\begin{align}\label{EE83}
			\E\Big[e^{-\rho(t\wedge \tau_N)}\|\w(t\wedge \tau_N)\|_\H^2\Big]\leq \|\w_0\|_\H^2e^{LT}. 
		\end{align}Thus, the initial data $\u_1(0)=\u_2(0)=\u_0$, leads to $\w(t\wedge \tau_N)=0,\ \PP$-a.s. Using the fact that $\tau_N\to T, \ \mathbb{P}$-a.s., implies $\w(t)=0$ and hence $\u_1(t)=\u_2(t), \ \PP$-a.s., for all $t\in[0,T]$, and hence the pathwise uniqueness follows.
	\end{proof}
	
	\begin{remark}
		It is natural to ask whether the operator $\mathrm{P}_{1/m}$ defined in \eqref{EE28} could be used in place of $\Pi_n$ from \eqref{2.2.4}, which appears in the existence proof, when establishing the energy equality. However, since $\mathrm{P}_{1/m}$ is self-adjoint but not a projection, $\Pi_n$ must be retained for the existence results.
	\end{remark}
	
	\section{Small time asymptotics} 
	In this section, we discuss the small time asymptotics of the  Ladyzhenskaya-Smagorinsky equations  with damping by studying the effect of small, highly nonlinear, unbounded drifts (small time large deviation principle (LDP)).
	We start this part with the basic definitions. 
	
	Let us recall some basics of LDP. 	Let $(\mathcal{E},\rho)$ be a complete separable metric space. We are given a family of probability measures $\{\mu_{\e}\}_{\e> 0}$ on $\mathcal{E}$ and a lower semicontinuous function $\mathrm{R} : \mathcal{E}\to[0,\infty]$, not identically equal to $+\infty$ and such that its level sets,
	\begin{align*}
		\mathrm{K}_M:=\left\{\psi\in\mathcal{E}: \mathrm{R}(\psi)\leq M\right\}, \ M>0,
	\end{align*}
	are compact. The family $\{\mu_{\e}\}_{\e> 0}$ is said to satisfy the LDP  or to have the large deviation property with respect to the rate function $\mathrm{R}$ if
	\begin{enumerate}
		\item [(i)] for all closed sets $\F\subset\mathcal{E}$,  we have
		\begin{align*}
			\limsup_{\e\to 0}\e\log\mu_{\e}(\F)\leq -\inf_{\psi\in\F}\mathrm{R}(\psi);
		\end{align*}
		\item [(ii)] for all open sets $\mathrm{O}\subset\mathcal{E}$,  we have
		\begin{align*}
			\liminf_{\e\to 0}\e\log\mu_{\e}(\mathrm{O})\leq -\sup_{\psi\in\mathrm{O}}\mathrm{R}(\psi).
		\end{align*}
	\end{enumerate}
	\begin{definition}[Rate function]
		Let $\mathrm{R}(\cdot)$ be a rate function on the space $\mathcal{E}$. A family $\{\u^\e\}_{\e>0}$ of $\mathcal{E}$-valued random variables is said to satisfy the Laplace principle on $\mathcal{E}$ with the rate function $\mathrm{R}(\cdot)$, for each $f\in\C_b(\mathcal{E})$, 
		\begin{align*}
			\lim_{\e\to0} \e\log\E\bigg[\exp\bigg(\frac{-f(\u^\e)}{\e}\bigg)\bigg]=-\inf_{\psi\in\mathcal{E}}\big\{f(\u^\e)+\mathrm{R}(\psi)\}.
		\end{align*}
	\end{definition}
	
	From previous sections, we know that under Hypothesis \ref{hyp}, the system \eqref{32} has a unique strong solution with paths in $\C([0,T];\H)\cap\mathrm{L}^p(0,T;\V_p)\cap\mathrm{L}^{r}(0,T;\wi\L^{r})$, $\mathbb{P}$-a.s., that is, $\u(\cdot)$ satisfies the following equation for all $t\in[0,T]$, $\mathbb{P}$-a.s., 
	\begin{align}
		\u(t)&=\u_0-\int_0^t\left[\mu \mathcal{A}(\u(s))+\B(\u(s))+\beta\mathcal{C}(\u(s))\right]\d s+\int_0^t\f(s)\d s\nonumber\\&\quad+\sum_{k=1}^{\infty}\int_0^t\boldsymbol{\sigma}_k(s,\u(s))\d\boldsymbol{W}_k(s).
	\end{align}
	Let us take $\e>0$. Then  by the scaling property of the Brownian motion, it can be  easily verified that $\u(\e t) $ coincides in law with the solution of the following equation for all $t\in[0,T]$, $\mathbb{P}$-a.s., 
	\begin{align}\label{37}
		\u^{\e}(t)&=\u_0-\e\int_0^t\left[\mu \mathcal{A}(\u^{\e}(s))+\B(\u^{\e}(s))+\beta\mathcal{C}(\u^{\e}(s))-\f(\e s)\right]\d s\nonumber\\&\quad+\sqrt{\e}\sum_{k=1}^{\infty}\int_0^t\boldsymbol{\sigma}_k(\e s,\u^{\e}(s))\d\boldsymbol{W}_k(s).
	\end{align}
	In order to prove small time LDP, we  need the following additional assumption on the noise coefficient: 
	\begin{hypothesis}\label{hyp1}
		The noise coefficient $\boldsymbol{\sigma}_k(\cdot,\cdot),k\geq 1$ satisfies: 
		\begin{itemize}
			\item[(H.3)]  (Additional growth condition).	There exists positive	constants $\widehat{K},\wi K$ and $\overline{K}$ such that for all $t\in[0,T]$ and $\u\in\H$,
			\begin{align}
				\sum_{k=1}^{\infty}\|\boldsymbol{\sigma}_k(t, \u)\|^{2}_{\V} 	&\leq \widehat{K}\left(1 +\|\u\|_{\V}^{2}\right), \label{4p8}\\ \sum_{k=1}^{\infty}\|\boldsymbol{\sigma}_k(t, \u)\|^{2}_{\V_p} 	&\leq \widehat{K}\left(1 +\|\u\|_{\V_p}^{2}\right),\ \text{ for }\ p>2, \label{4p9}\\  \sum_{k=1}^{\infty}\|\boldsymbol{\sigma}_k(t, \u)\|^{2}_{\wi\L^r} 	&\leq \overline{K}\left(1 +\|\u\|_{\wi\L^r}^{2}\right),\ \text{ for }\ r\geq 2. \label{4p10}
			\end{align}
			\item[(H.4)] (Continuity at $t=0$). For each $k\in\N$, the operator $\boldsymbol{\sigma}_k(\cdot,\u)$ is continuous at $0$ for any \textbf{$\u\in\H$}, that is, for any sequence  $\{t_n\}_{n\in\N}$ such that $t_n\to0$, as $n\to\infty$ and $\u\in\H$,
			\begin{align}\label{4p010}
			\|\boldsymbol{\sigma}_k(t_n,\u)-\boldsymbol{\sigma}_k(0,\u)\|_\H \to 0, \text{ for each } k\in\N.
			\end{align}Furthermore, as $\boldsymbol{\sigma}_k(\cdot,\u)$ is continuous at $0$, for all $\u\in\H$, it is measurable at $0$, for each $k\in\N$. 
		\end{itemize}
	\end{hypothesis}	
	Note that the condition \eqref{4p8} implies that for each $\u\in\V$, the linear map $\boldsymbol{\sigma}(\cdot,\u):=\{\boldsymbol{\sigma}_k(\cdot,\u)\}_{k\in\mathbb{N}}\break :\ell^2\to\V$ defined by \eqref{3p2} is in $\mathcal{L}_2(\ell^2;\V)$. Note that  every Hilbert space, $\L^p$ spaces with $p \geq 2$ and Sobolev spaces $\mathbb{W}^{s,p}_0$
	with $p \geq 2$ and $s \geq 1$ are $2$-smooth Banach spaces. For a Hilbert space $\mathbb{U}$ and a $2$-smooth Banach space $\mathbb{V}$, we denote by $\gamma(\mathbb{U};\mathbb{V})$ for the space of all $\gamma$-radonifying operators from $\mathbb{U}$ to $\mathbb{V}$. We remember that an operator $\mathrm{R}\in\gamma(\mathbb{U};\V)$ if the series $$\sum_{k=1}^{\infty}\gamma_k\mathrm{R}(e_k)$$ converges in $\mathrm{L}^2(\wi\Omega;\mathbb{V})$ for any sequence $\{\gamma_k\}_{k\geq 0}$ of independent Gaussian real-valued random variables on a probability space $(\wi{\Omega},\wi{\mathscr{F}},\wi{\mathbb{P}})$ and any orthonormal basis $\{e_k\}_{k\geq 0}$ of  $\mathbb{U}$. Then, the space $\gamma(\mathbb{U};\mathbb{V})$ is endowed with the norm 
	\begin{align*}
		\|\mathrm{R}\|_{\gamma(\mathbb{U};\mathbb{V})}=\left(\wi{\E}\left\|\sum_{k=1}^{\infty}\gamma_k\mathrm{R}(e_k)\right\|_{\V}^2\right)^{\frac{1}{2}},
	\end{align*}
	which does not depend $\{\gamma_k\}_{k\geq 0}$  and $\{e_k\}_{k\geq 0},$  and is a Banach space. If $\mathbb{V}$ is a separable Hilbert space,  then $\gamma(\mathbb{U};\mathbb{V})$) consists of all Hilbert-Schmidt operators of mapping $\mathbb{U}$ into $\mathbb{V}$. The condition \eqref{4p9} implies that for every $\u\in\V_p$, for some $p\geq 2$, the linear map $\boldsymbol{\sigma}(\cdot,\u):=\{\boldsymbol{\sigma}_k(\cdot,\u)\}_{k\in\mathbb{N}}:\ell^2\to\V_p$ defined by \eqref{3p2} is in $\gamma(\ell^2;\V_p)$, that is,  $\boldsymbol{\sigma}(\cdot,\u)$ is a $\gamma$-radonifying operator from $\ell^2$ to $\V_p$. For the orthonormal basis $\{e_k\}_{k\in\N}\in\ell^2$ with $e_k=(0,\ldots,1,\ldots)$ and  an independent standard Gaussian sequence $\{\gamma_k\}_{k\in\N}$  on some probability space $(\widetilde{\Omega},\widetilde{\mathcal{F}},\widetilde{\mathbb{P}})$,  using H\"older's inequality, stochastic Fubini's theorem, Gaussian formula (\cite[Appendix A.1]{DNu}) and Minkowski's  inequality, 
	we have
	\begin{align*} 
		\|\boldsymbol{\sigma}(t,\u)\|_{\gamma(\ell^2;\V_p)}^2&=\lim_{N\to\infty}\wi\E\left[\left\|\sum_{k=1}^{N}\gamma_k\boldsymbol{\sigma}_k(t,\u)e_k\right\|_{\V_p}^2\right]\nonumber\\&=\lim_{N\to\infty}\wi\E\left[\left(\int_{\mathcal{O}}\left|\sum_{k=1}^{N}\gamma_k\nabla\boldsymbol{\sigma}_k(t,\u(x))\right|^p\d x\right)^{\frac{2}{p}}\right]\nonumber\\&\leq\lim_{N\to\infty} \left\{\int_{\mathcal{O}}\wi\E\left|\sum_{k=1}^{N}\gamma_k\nabla\boldsymbol{\sigma}_k(t,\u(x))\right|^p\d x\right\}^{\frac{2}{p}}\nonumber\\&=\lim_{N\to\infty}A_p\left\{\int_{\mathcal{O}}\left(\wi\E\left[\left|\sum_{k=1}^{N}\gamma_k\nabla\boldsymbol{\sigma}_k(t,\u(x))\right|^2\right]\right)^{\frac{p}{2}}\d x\right\}^{\frac{2}{p}}\nonumber\\&=\lim_{N\to\infty} A_p\left\{\int_{\mathcal{O}}\left(\sum_{k=1}^{N}\left|\nabla\boldsymbol{\sigma}_k(t,\u(x))\right|^2\right)^{\frac{p}{2}}\d x\right\}^{\frac{2}{p}}\nonumber\\&\leq \lim_{N\to\infty}A_p\sum_{k=1}^{N}\left(\int_{\mathcal{O}}\left|\nabla\boldsymbol{\sigma}_k(t,\u(x))\right|^{p}\d x\right)^{\frac{2}{p}}=A_p\sum_{k=1}^{\infty}\|\boldsymbol{\sigma}_k(t,\u)\|_{\V_p}^2\nonumber\\&\leq A_p\widehat{K}\left(1+\|\u\|_{\V_p}^2\right)<\infty,
	\end{align*}
	where $A_p=\int_{\mathbb{R}}\frac{|\xi|^p}{\sqrt{2\pi}}e^{-\frac{\xi^2}{2}}\d\xi.$ Similarly, one can show that the condition \eqref{4p10} implies that for every $\u\in\wi\L^r$,  for  $r\geq 2$, the linear map $\boldsymbol{\sigma}(\cdot,\u):=\{\boldsymbol{\sigma}_k(\cdot,\u)\}_{k\in\mathbb{N}}:\ell^2\to\wi\L^r$ defined by \eqref{3p2} is in $\gamma(\ell^2;\wi\L^r)$. 
	
	Let $\mu_{\u_0}^{\e}$ be the law of $\u^{\e}(\cdot)$ on $\C([0,T];\H)$. 
	Let us set 
	\begin{align}\label{4.999}\nonumber
		\mathcal{H}:=&\bigg\{\h=(h_1,\ldots,h_k,\ldots);\ \h(\cdot):[0,T]\to\ell^2\ \text{ such that }\nonumber\\&\qquad \h\ \text{ is absolutely continuous and }\ \sum_{k=1}^{\infty}\int_0^T\dot{h}_k^2(t)\d t <\infty\bigg\}.
	\end{align}
	For $\h\in\mathcal{H}$, let $\u^{\h}(\cdot)$ be the unique solution of the following deterministic equation: 
	\begin{equation}\label{499}
		\left\{
		\begin{aligned}
			\d\u^{\h}(t)&=\sum_{k=1}^{\infty}\boldsymbol{\sigma}_k(0,\u^{\h}(t))\dot{h}_k(t)\d t,\\
			\u^{\h}(0)&=\u_0. 
		\end{aligned}
		\right.
	\end{equation}The well-posedness of the system \eqref{499} is straight forward from \cite[Step 2, Appendix B]{AKMTM6}.

	For $\h(t)=\sum\limits_{k=1}^{\infty}h_k(t)e_k\in\mathcal{H}$, we define 
	\begin{align*}
		\mathrm{I}(\h)=\frac{1}{2}\sum_{k=1}^{\infty}\int_0^T\left[\dot{h}_k(t)\right]^2\d t.
	\end{align*}
	For $\g\in\C([0,T];\H)$, we also define 
	\begin{align*}
		\Gamma_{\g}:=\bigg\{\h\in\mathcal{H}: \g(t)=\u_0+\sum_{k=1}^{\infty}\int_0^t\boldsymbol{\sigma}_k(0,\g(s))\dot{h}_k(s)\d s, \ 0\leq t\leq T\bigg\}.
	\end{align*}
	Furthermore, we define 
	\begin{align}\label{4100}
		\mathrm{R}(\g)=\left\{\begin{array}{cc}\inf\limits_{\h\in\Gamma_{\g}}\mathrm{I}(\h)& \text{ if }\ \Gamma_{\g}\neq\varnothing,\\
			+\infty&\text{ if } \ \Gamma_{\g}=\varnothing. \end{array}\right.
	\end{align}
	
	Then we have the following result: 
	\begin{theorem}\label{maint}
		For $p\geq \frac{d}{2}+1,\ r\geq 2$ or $p\geq 2,\ r\geq 4$  ($\beta\mu> 1$ for $r=4$), under Hypotheses \ref{hyp} and \ref{hyp1},	$\mu_{\u_0}^{\e}$ satisfies a large deviation principle with the rate function $\mathrm{R}(\cdot)$, that is,
		\begin{enumerate}
			\item [(i)] For any closed set $\F\subset\C([0,T];\H)$,
			\begin{align*}
				\limsup_{\e\to 0}\e\log\mu_{\u_0}^{\e}(\F)\leq -\inf_{\g\in \F}\mathrm{R}(\g);
			\end{align*}
			\item [(ii)] For any open set $\mathrm{O}\subset\C([0,T];\H)$,
			\begin{align*}
				\liminf_{\e\to 0}\e\log\mu_{\u_0}^{\e}(\mathrm{O})\geq -\sup_{\g\in \mathrm{O}}\mathrm{R}(\g).
			\end{align*}
		\end{enumerate}
	\end{theorem}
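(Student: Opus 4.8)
The plan is to deduce the small-time LDP from a small-noise LDP for the rescaled equation \eqref{37}, exploiting the fact that in \eqref{37} the unbounded, nonlinear drift carries a factor $\e$ while the noise carries only $\sqrt{\e}$, so that at the Freidlin--Wentzell speed $1/\e$ the drift is negligible. Accordingly, I would introduce the drift-free companion process $\mathbf{Z}^{\e}(\cdot)$ solving
\begin{align*}
	\mathbf{Z}^{\e}(t)=\u_0+\sqrt{\e}\sum_{k=1}^{\infty}\int_0^t\boldsymbol{\sigma}_k(\e s,\mathbf{Z}^{\e}(s))\d\boldsymbol{W}_k(s),\quad t\in[0,T],
\end{align*}
whose well-posedness in $\C([0,T];\H)$ follows from Hypothesis \ref{hyp}, the coefficients being Lipschitz and of linear growth in $\H$. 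Observe that $\mathbf{Z}^{\e}$ is a genuine $\H$-valued diffusion with no unbounded terms, and that the rate function $\mathrm{R}$, defined through $\Gamma_{\g}$, is governed precisely by the controlled version of this diffusion.

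First I would establish the LDP for the family $\{\mathbf{Z}^{\e}\}_{\e>0}$ on $\C([0,T];\H)$ with rate function $\mathrm{R}$. Since the drift is absent and $\boldsymbol{\sigma}$ satisfies (H.1)--(H.2), this is the standard Freidlin--Wentzell situation, which I would carry out via the Budhiraja--Dupuis weak convergence (variational) method: prove well-posedness and a priori bounds for the skeleton equation $\u^{\h}$, verify that the level sets of $\mathrm{R}$ are compact, and establish the Laplace principle upper and lower bounds by showing that solutions of the stochastically controlled equations converge weakly in $\C([0,T];\H)$ to the corresponding skeleton solutions as $\e\to 0$. The Lipschitz structure of $\boldsymbol{\sigma}$ in $\H$ makes these convergences routine.

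The main work, and the principal obstacle, is to show that $\u^{\e}$ and $\mathbf{Z}^{\e}$ are \emph{exponentially equivalent}, that is, for every $\delta>0$,
\begin{align*}
	\lim_{\e\to 0}\e\log\mathbb{P}\Big(\sup_{0\le t\le T}\|\u^{\e}(t)-\mathbf{Z}^{\e}(t)\|_{\H}>\delta\Big)=-\infty.
\end{align*}
Setting $\w^{\e}=\u^{\e}-\mathbf{Z}^{\e}$ and applying It\^o's formula to $\|\w^{\e}(t)\|_{\H}^2$, the martingale and quadratic-variation parts are controlled by (H.2), so the only delicate contribution is the $\e$-weighted nonlinear drift $-2\e\langle\mu\mathcal{A}(\u^{\e})+\B(\u^{\e})+\beta\mathcal{C}(\u^{\e})-\f(\e\cdot),\w^{\e}\rangle$. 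To handle it I would localize by the stopping times $\tau_N$ (and $\tau_N^{n,\e}$) introduced in the preamble, on which $\|\u^{\e}\|_{\V_p}$ and $\|\u^{\e}\|_{\wi\L^r}$ stay bounded by $N$, use the monotonicity estimates of Lemmas \ref{lem2.2} and \ref{lem2.5} together with the coercivity of $\mathcal{A}$ and $\mathcal{C}$ to absorb the genuinely nonlinear pieces, and then apply exponential (Bernstein-type) supermartingale inequalities to the resulting semimartingale. The explicit factor $\e$ in front of the drift is exactly what forces the resulting exponential bound to be $o(1/\e)$.

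To make the localization effective, the remaining ingredient is a family of uniform-in-$\e$ \emph{super-exponential} tail estimates, namely that
\begin{align*}
	\lim_{N\to\infty}\limsup_{\e\to 0}\e\log\mathbb{P}\Big(\sup_{0\le t\le T}\|\u^{\e}(t)\|_{\H}^2+\e\int_0^T\|\u^{\e}(s)\|_{\V_p}^p\d s+\e\int_0^T\|\u^{\e}(s)\|_{\wi\L^r}^{r}\d s>N\Big)=-\infty,
\end{align*}
together with the analogous bounds in the stronger norms controlled by Hypothesis \ref{hyp1}, so that the unbounded terms $\mathcal{A},\B,\mathcal{C}$ evaluated at $\u^{\e}$ can be estimated; these follow from the energy/It\^o identity \eqref{a34}, the coercivity of $\G$, Hypotheses \ref{hyp}--\ref{hyp1}, and exponential martingale inequalities. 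Once the exponential tails and the exponential equivalence are in hand, the LDP for $\mu_{\u_0}^{\e}$ with rate function $\mathrm{R}$ follows from the LDP for $\mathbf{Z}^{\e}$ by the exponential equivalence principle (cf. Theorem~4.2.13, \cite{DZ}). I expect the control of the nonlinear drift under localization---reconciling the unbounded operators $\mathcal{A},\B,\mathcal{C}$ with the exponential scale---to be the most technical step, with the ranges $p\ge\frac d2+1,\ r\ge 2$ (respectively $p\ge 2,\ r\ge 4$) entering precisely to secure the required integrability and monotonicity.
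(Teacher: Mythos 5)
Your overall architecture coincides with the paper's: introduce the drift-free companion process (the paper's $\v^{\e}$ in \eqref{3.7}, your $\mathbf{Z}^{\e}$), obtain an LDP for it with rate function $\mathrm{R}$ (the paper simply invokes Theorem 12.9 of \cite{DaZ} rather than running the Budhiraja--Dupuis machinery, but that is a cosmetic difference), prove exponential equivalence of $\u^{\e}$ and the companion process, and conclude by Theorem 4.2.13 of \cite{DZ}. Your super-exponential energy estimate for $\u^{\e}$ is exactly the paper's Lemma \ref{lem3.7}.

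There is, however, one genuine gap: the direct comparison of $\u^{\e}$ with $\mathbf{Z}^{\e}$ that you propose cannot be closed when $\u_0$ is only in $\H$. After applying It\^o's formula to $\|\u^{\e}-\mathbf{Z}^{\e}\|_{\H}^2$ and using monotonicity to absorb the differences $\langle\mathcal{A}(\u^{\e})-\mathcal{A}(\mathbf{Z}^{\e}),\u^{\e}-\mathbf{Z}^{\e}\rangle$ and $\langle\mathcal{C}(\u^{\e})-\mathcal{C}(\mathbf{Z}^{\e}),\u^{\e}-\mathbf{Z}^{\e}\rangle$, you are left with the cross terms $\langle\mathcal{A}(\mathbf{Z}^{\e}),\u^{\e}-\mathbf{Z}^{\e}\rangle$, $\langle\mathcal{C}(\mathbf{Z}^{\e}),\u^{\e}-\mathbf{Z}^{\e}\rangle$ and $\langle\B(\mathbf{Z}^{\e},\u^{\e}-\mathbf{Z}^{\e}),\mathbf{Z}^{\e}\rangle$, which require $\mathbf{Z}^{\e}$ to lie in $\V_p\cap\wi\L^{r}$ (indeed this is the whole purpose of Hypothesis \ref{hyp1}). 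But the drift-free equation has no smoothing mechanism: started from $\u_0\in\H$, the process $\mathbf{Z}^{\e}$ has no finite $\V_2$, $\V_p$ or $\wi\L^{r}$ norms at any time, so no localization by stopping times in those norms is available and the ``analogous bounds in the stronger norms'' you invoke simply do not exist for $\mathbf{Z}^{\e}$. The paper resolves this by a triangulation that is absent from your plan: approximate $\u_0$ by $\u_0^n\in\V_p\cap\wi\L^{r}$, prove super-exponential bounds for $\sup_t\|\v_n^{\e}(t)\|_{\V_2}^2$, $\sup_t\|\v_n^{\e}(t)\|_{\V_p}^p$, $\sup_t\|\v_n^{\e}(t)\|_{\wi\L^{r}}^{r}$ (Lemma \ref{lem3.8}, which is where Hypothesis \ref{hyp1} and the $\gamma$-radonifying/It\^o formula in $\V_p$ and $\wi\L^r$ enter), show that $\u_n^{\e}$ and $\v_n^{\e}$ converge to $\u^{\e}$ and $\v^{\e}$ super-exponentially, uniformly in $\e$, as $n\to\infty$ (Lemmas \ref{lem3.9} and \ref{lem3.10}, the latter again needing the local/global monotonicity with the exponential weight $e^{-k\e\int_0^t\|\u^{\e}\|_{\V_p}^{2p/(2p-d)}\d s}$ in the case $p\geq\frac{d}{2}+1$), and only then compare $\u_n^{\e}$ with $\v_n^{\e}$ for each fixed $n$ (Lemma \ref{lem3.11}). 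Without this intermediate smoothing of the initial data your exponential-equivalence step fails at the very first cross term.
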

	
	\begin{proof}
		Let $\v^{\e}(\cdot)$ be the solution of the stochastic equation
		\begin{align}\label{3.7}
			\v^{\e}(t)=\u_0+\sqrt{\e}\sum_{k=1}^{\infty}\int_0^t\boldsymbol{\sigma}_k(\e s,\v^{\e}(s))\d\boldsymbol{W}_k(s),
		\end{align}
		and $\tilde{\mu}_{\u_0}^{\e}$ be the law of $\v^{\e}(\cdot)$ on the $\C([0,T];\H)$. Then from \cite[Lemma 2.1]{AKMTM6}, we infer that $\v^{\e}(\cdot)$   satisfies a LDP with the rate function $\mathrm{R}(\cdot)$. Our main task is to show that two families of the probability measures $\mu^{\e}_{\u_0}$ and $\tilde{\mu}_{\u_0}^{\e}$  are exponentially equivalent, that is, for any $\delta>0$,
		\begin{align}\label{3p8}
			\lim_{\e\to 0}\e\log \mathbb{P}\left\{\sup_{t\in[0,T]}\|\u^{\e}(t)-\v^{\e}(t)\|_{\H}^2>\delta\right\}=-\infty. 
		\end{align}
	From	\cite[Theorem 4.2.13]{DZ}, we know that if one of the two exponentially equivalent families satisfies a LDP, so does the other. Then our Theorem \ref{maint} follows from \eqref{3p8} and \cite[Theorem 4.2.13]{DZ}.
	\end{proof}
	\begin{lemma}\label{lemLDP}
		Assume that Hypothesis \ref{hyp1} holds. Then, there exists a stochastic process $\{\v^\e\}_{\e>0}$ (solution to the system \eqref{3.7}) satisfying an LDP in $\C([0,T];\H)$ with the rate function $\mathrm{R}(\cdot)$ defined in \eqref{4100}.
	\end{lemma}
	\begin{proof}
		For the proof we are referring to \cite[Appendix B]{AKMTM6}, where the authors obtained LDP with more general diffusion coefficient.
	\end{proof}

	Now, it is left to prove \eqref{3p8} only. Due to the presence of  the nonlinear operators $\mathcal{A},\B$ and $\mathcal{C}$, the proof of \eqref{3p8} not easy and we divide the proof into several lemmas. The following result is an estimate of the probability that the solution of \eqref{37} leaves an energy ball.
	\begin{lemma}\label{lem3.7}
		For $\f\in\mathrm{L}^{p'}(0,T;\V_p')$, under Hypothesis \ref{hyp}, we have 
		\begin{align}\label{39}
			\lim_{M\to\infty}\sup_{0<\e\leq 1}\e\log\mathbb{P}\left\{\left(|\u^{\e}|_{\V_p,\wi\L^{r}}^{\H}(T)\right)^2>M\right\}=-\infty,
		\end{align}
		where 
		\begin{align*}
			&	\left(|\u^{\e}|_{\V_p,\wi\L^{r}}^{\H}(T)\right)^2\nonumber\\&=\sup_{0\leq t\leq T}\|\u^{\e}(t)\|_{\H}^2+\e\mu\int_0^T\|\u^{\e}(t)\|_{\V}^2\d t+\frac{\e\mu}{2}\int_0^T\|\u^{\e}(t)\|_{\V_p}^p\d t+\e\beta\int_0^T\|\u^{\e}(t)\|_{\wi\L^{r}}^{r}\d t.
		\end{align*}
	\end{lemma}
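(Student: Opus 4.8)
The plan is to derive from Itô's formula a pathwise energy inequality for $\u^{\e}$, reduce the left-hand side of \eqref{39} to the supremum of a single scalar semimartingale, and then destroy the multiplicative character of the noise by a logarithmic change of variable before invoking an exponential martingale estimate. First I would apply the Itô formula \eqref{a34} (legitimate by Theorem \ref{exis2}, via the approximation scheme of \cite{CLF,MTM8} in the range $r>\frac{pd}{d-p}$) to the rescaled process $\u^{\e}$ solving \eqref{37}. Using the coercivity bound $2\langle(1+|\nabla\u|^2)^{\frac{p-2}{2}}\nabla\u,\nabla\u\rangle\geq\|\nabla\u\|_{\H}^2+\|\nabla\u\|_{\wi\L^p}^p$, the identities $\langle\B(\u^{\e}),\u^{\e}\rangle=0$ and $\langle\mathcal{C}(\u^{\e}),\u^{\e}\rangle=\|\u^{\e}\|_{\wi\L^{r}}^{r}$, Young's inequality $2\e\langle\f(\e t),\u^{\e}\rangle\leq\frac{\e\mu}{2}\|\u^{\e}\|_{\V_p}^p+C\e\|\f(\e t)\|_{\V_p'}^{p/(p-1)}$, and the growth bound (H.1), I obtain for
\[
\Xi^{\e}(t):=\|\u^{\e}(t)\|_{\H}^2+\e\mu\int_0^t\|\u^{\e}\|_{\V_2}^2\d s+\frac{\e\mu}{2}\int_0^t\|\u^{\e}\|_{\V_p}^p\d s+\e\beta\int_0^t\|\u^{\e}\|_{\wi\L^{r}}^{r}\d s
\]
the differential inequality $\d\Xi^{\e}\leq[\e K(1+\Xi^{\e})+C\e\|\f(\e t)\|_{\V_p'}^{p/(p-1)}]\d t+\d M^{\e}$, where $M^{\e}(t)=2\sqrt{\e}\sum_{k}\int_0^t(\boldsymbol{\sigma}_k(\e s,\u^{\e}),\u^{\e})\d\boldsymbol{W}_k$ and the drift is closed off by $\|\u^{\e}\|_{\H}^2\leq\Xi^{\e}$. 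A one-line monotonicity argument gives $\left(|\u^{\e}|_{\V_p,\wi\L^{r}}^{\H}(T)\right)^2\leq 2\sup_{0\le t\le T}\Xi^{\e}(t)$, so it suffices to estimate $\mathbb{P}\{\sup_t\Xi^{\e}>M/2\}$.

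The crucial step is the substitution $G^{\e}(t):=\log(1+\kappa\Xi^{\e}(t))$ for a fixed $\kappa>0$. Since $\d\langle M^{\e}\rangle_t\leq 4\e K(1+\Xi^{\e})\Xi^{\e}\d t$, the martingale part $\tilde N^{\e}$ of $G^{\e}$, namely $\d\tilde N^{\e}=\frac{\kappa}{1+\kappa\Xi^{\e}}\d M^{\e}$, has quadratic variation density
\[
\frac{\kappa^2}{(1+\kappa\Xi^{\e})^2}\d\langle M^{\e}\rangle\leq 4\e K\,\frac{\kappa^2(1+\Xi^{\e})\Xi^{\e}}{(1+\kappa\Xi^{\e})^2}\d t\leq 4\e K C_{\kappa}\,\d t,
\]
so that $\langle\tilde N^{\e}\rangle_T\leq 4\e K C_{\kappa}T$ holds deterministically: the multiplicative growth $(1+\Xi^{\e})\Xi^{\e}$ is exactly absorbed by the factor $(1+\kappa\Xi^{\e})^{-2}$. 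The drift of $G^{\e}$ is bounded by $\e K C_{\kappa}'\,\d t+\kappa C\e\|\f(\e t)\|_{\V_p'}^{p/(p-1)}\d t$ together with the nonpositive Itô correction. Integrating and using the rescaling $\e\int_0^T\|\f(\e s)\|_{\V_p'}^{p/(p-1)}\d s=\int_0^{\e T}\|\f(u)\|_{\V_p'}^{p/(p-1)}\d u\leq\|\f\|_{\mathrm{L}^{p/(p-1)}(0,T;\V_p')}^{p/(p-1)}$, I arrive at $G^{\e}(t)\leq C_\star+\sup_{s\le t}\tilde N^{\e}(s)$, with $C_\star$ deterministic and independent of $\e\in(0,1]$.

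Consequently $\{\sup_t\Xi^{\e}>M/2\}\subseteq\{\sup_t\tilde N^{\e}>\log(1+\kappa M/2)-C_\star\}$. Because $\langle\tilde N^{\e}\rangle_T\leq 4\e K C_{\kappa}T$ almost surely (no stopping time is needed, the integrand being bounded), the exponential supermartingale $\exp(\lambda\tilde N^{\e}-\frac{\lambda^2}{2}\langle\tilde N^{\e}\rangle)$ and Doob's inequality, optimized in $\lambda$, yield
\[
\mathbb{P}\left\{\left(|\u^{\e}|_{\V_p,\wi\L^{r}}^{\H}(T)\right)^2>M\right\}\leq\exp\left(-\frac{\left(\log(1+\kappa M/2)-C_\star\right)^2}{8\e K C_{\kappa}T}\right).
\]
Multiplying by $\e$ and letting $M\to\infty$ gives \eqref{39}, uniformly over $\e\in(0,1]$.

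The hard part is the following structural obstruction: under the merely linear growth (H.1), $\langle M^{\e}\rangle$ grows like $\e(\Xi^{\e})^2$, so a direct exponential estimate on $M^{\e}$ (whose deviations are of order $\Xi^{\e}$, i.e.\ only linear in the energy) produces a bound that is constant in $M$ and therefore useless for the claimed $-\infty$ limit. The logarithmic change of variable $G^{\e}=\log(1+\kappa\Xi^{\e})$ is precisely the device that renders the quadratic variation uniformly $O(\e)$, converting the estimate into one with the required decay; calibrating this substitution, together with the rigorous justification of the Itô identity \eqref{a34} for the quasilinear, $r$-superlinear problem, is the step I expect to demand the most care.
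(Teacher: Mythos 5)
Your proof is correct, and it takes a genuinely different route from the paper's. The paper proceeds by a moment method: after the same Itô/coercivity/Young/(H.1) reductions, it estimates $\big\{\E\big[(|\u^{\e}|_{\V_p,\wi\L^{r}}^{\H}(T))^{2q}\big]\big\}^{1/q}$ using the martingale inequality \eqref{3.15} with its sharp constant $Cq^{1/2}$, closes the estimate with Minkowski's integral inequality and Gronwall at the level of $L^q$-norms, and then applies Markov's inequality with the choice $q=1/\e$, arriving at $\e\log\mathbb{P}\{\cdot>M\}\leq -\log M + C$. You instead work pathwise: the logarithmic substitution $G^{\e}=\log(1+\kappa\Xi^{\e})$ converts the quadratic variation bound $\d\langle M^{\e}\rangle\lesssim \e(1+\Xi^{\e})\Xi^{\e}\,\d t$ into a deterministic bound $\langle\tilde N^{\e}\rangle_T\lesssim\e$, after which a single exponential supermartingale estimate finishes the job with no moment computations and no Gronwall iteration. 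Your bound is in fact quantitatively stronger (Gaussian in $\log M$, i.e.\ $\e\log\mathbb{P}\lesssim-(\log(1+\kappa M/2)-C_\star)^2$, versus the paper's $-\log M$), and it isolates cleanly why a naive exponential estimate on $M^{\e}$ itself must fail under the merely linear growth (H.1) — a point the paper's $\sqrt{q}$-BDG route circumvents rather than addresses. The paper's approach has the advantage that the same $L^{2q}$/BDG/Gronwall template is reused verbatim in Lemmas \ref{lem3.8}--\ref{lem3.11}, where difference estimates (not just energy bounds) are needed and the logarithmic trick does not apply as directly. Both arguments rest on the same justification of the Itô identity \eqref{a34} for the quasilinear problem with $r>\frac{pd}{d-p}$, which you correctly flag as the delicate foundational step.
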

	\begin{proof}
		Applying the infinite-dimensional It\^o's formula to the process $\|\u^{\e}(\cdot)\|_{\H}^2$ (\cite{GK1,MTM8,Me}), we obtain for all $t\in[0,T]$,	$\mathbb{P}$-a.s., 
		\begin{align}\label{310}
			&	\|\u^{\e}(t)\|_{\H}^2+2\mu \e\int_0^t\left(\left(1+|\nabla\u(s)|^2\right)^{\frac{p-2}{2}}\nabla\u(s),\nabla\u(s)
			\right)\d s+2\beta\int_0^t\|\u^{\e}(s)\|_{\widetilde{\L}^{r}}^{r}\d s\nonumber\\&=\|\x\|_{\H}^2+2\e\int_0^t\langle\f(\e s),\u^{\e}(s)\rangle \d s+\e\sum_{k=1}^{\infty}\int_0^t\|\boldsymbol{\sigma}_k(\e s,\u^{\e}(s))\|_{\H}^2\d s\nonumber\\&\quad+2\sqrt{\e}\sum_{k=1}^{\infty}\int_0^t\big(\boldsymbol{\sigma}_k(\e s,\u^{\e}(s))\d\boldsymbol{W}_k(s),\u^{\e}(s)\big)\nonumber\\&=:\|\x\|_{\H}^2+I_1+I_2+I_3.
		\end{align}
		It should be noted that 
		\begin{align*}
			2\left(\left(1+|\nabla\u|^2\right)^{\frac{p-2}{2}}\nabla\u,\nabla\u
			\right)\geq \|\nabla\u\|_{\H}^2+\|\nabla\u\|_{\wi\L^p}^p. 
		\end{align*}
		We estimate $I_1$ using Cauchy-Schwarz's and Young's inequalities, and $I_2$ using Hypothesis \ref{hyp} (H.1) as 
		\begin{align}
			|I_1|&\leq 2\e\int_0^t\|\u^{\e}(s)\|_{\V_p}\|\f(
			\e s)\|_{\V_p'}\d s\nonumber\\&\leq\frac{\e\mu}{2}\int_0^t\|\u^{\e}(s)\|_{\V_p}^p\d s+\frac{2^{\frac{p+1}{p-1}}(p-1)}{ p}\left(\frac{1}{\mu p}\right)^{\frac{1}{p-1}}\int_0^{t\e}\|\f(s)\|_{\V_p'}^{p'}\d s,\label{311} \\
			|I_2|&\leq\e K\int_0^t\left(1+\|\u^{\e}(s)\|_{\H}^2\right)\d s\leq \e Kt+\e K\int_0^t\|\u^{\e}(s)\|_{\H}^2\d s.\label{312}
		\end{align}
		Substituting \eqref{311} and \eqref{312} in \eqref{310}, we deduce that  for all $t\in[0,T]$,	$\mathbb{P}$-a.s., 
		\begin{align*}
			&	\|\u^{\e}(t)\|_{\H}^2+\e\mu\int_0^t\|\u(s)\|_{\V}^2\d s+\frac{\e\mu}{2}\int_0^t\|\u(s)\|_{\V_p}^p\d s+2\beta\e\int_0^t\|\u(s)\|_{\widetilde{\L}^{r}}^{r}\d s\nonumber\\&\leq\left(\|\x\|_{\H}^2+\e Kt+C(p,\mu)\int_0^{t\e}\|\f(s)\|_{\V_p'}^{p'}\d s\right)+\e K\int_0^t\|\u^{\e}(s)\|_{\H}^2\d s\nonumber\\&\quad+2\sqrt{\e}\left|\sum_{k=1}^{\infty}\int_0^t\big(\boldsymbol{\sigma}_k(\e s,\u^{\e}(s))\d\boldsymbol{W}_k(s),\u^{\e}(s)\big)\right|,
		\end{align*}
		where $C(p,\mu)=\frac{2^{\frac{p+1}{p-1}}(p-1)}{ p}\left(\frac{1}{\mu p}\right)^{\frac{1}{p-1}}$. Therefore, taking supremum over time $t\in[0,T]$, we get  for all $t\in[0,T]$,	$\mathbb{P}$-a.s., 
		\begin{align*}
			\left(|\u^{\e}|_{\V_p,\wi\L^{r}}^{\H}(T)\right)^2 &\leq\left(\|\x\|_{\H}^2+\e KT+C(p,\mu)\int_0^{T\e}\|\f(t)\|_{\V_p'}^{p'}\d t\right)+\e K\int_0^T	\left(|\u^{\e}|_{\V_p,\wi\L^{r}}^{\H}(t)\right)^2 \d t\nonumber\\&\quad+2\sqrt{\e}\sup_{t\in[0,T]}\left|\sum_{k=1}^{\infty}\int_0^t\big(\boldsymbol{\sigma}_k(\e s,\u^{\e}(s))\d\boldsymbol{W}_k(s),\u^{\e}(s)\big)\right|.
		\end{align*}
		Thus, for $q\geq 2$, we obtain 
		\begin{align}\label{313}
			&	\left\{\E\left[	\left(|\u^{\e}|_{\V_p,\wi\L^{r}}^{\H}(T)\right)^{2q}\right]\right\}^{\frac{1}{q}}\nonumber\\&\leq 2\left(\|\x\|_{\H}^2+\e KT+C(p,\mu)\int_0^{T\e}\|\f(t)\|_{\V_p'}^{p'}\d t\right)+2\e K\left\{\E\left[\left(\int_0^T	\left(|\u^{\e}|_{\V_p,\wi\L^{r}}^{\H}(t)\right)^2 \d t\right)^{q}\right]\right\}^{\frac{1}{q}}\nonumber\\&\quad+4\sqrt{\e}\left\{\E\left[\sup_{t\in[0,T]}\left|\sum_{k=1}^{\infty}\int_0^t\big(\boldsymbol{\sigma}_k(\e s,\u^{\e}(s))\d\boldsymbol{W}_k(s),\u^{\e}(s)\big)\right|^q\right]\right\}^{\frac{1}{q}}.
		\end{align}
		In order to estimate the stochastic integral term, we use the following important result (cf. \cite{MTB,BD1}). There exists
		a universal constant $C>0$ such that, for any $q \geq 2$ and for any continuous martingale $\M=\{\M_t\}_{t\geq 0}$ with $\M_0 = 0$, we have 
		\begin{align}\label{3.15}
			\|\M^*_t\|_{\mathrm{L}^q}\leq Cq^{\frac{1}{2}}\|\langle\M_t\rangle\|_{\mathrm{L}^q},
		\end{align}
		where $\M^*_t=\sup\limits_{s\in[0,t]}\|\M_s\|_{\H}$ and $\{\langle\M_t\rangle\}_{t\geq 0}$  is the quadratic variation process of $\M$. Making use of this result and Minkowski's integral inequality, we get 
		\begin{align}\label{315}
			&4\sqrt{\e}\left\{\E\left[\sup_{t\in[0,T]}\left|\sum_{k=1}^{\infty}\int_0^t\big(\boldsymbol{\sigma}_k(\e s,\u^{\e}(s))\d\boldsymbol{W}_k(s),\u^{\e}(s)\big)\right|^q\right]\right\}^{\frac{1}{q}}\nonumber\\&\leq 4C\sqrt{q\e}\left\{\E\left[\left(\int_0^T\sum_{k=1}^{\infty}\|\boldsymbol{\sigma}_k(\e t,\u^{\e}(t))\|_{\H}^2\|\u^{\e}(t)\|_{\H}^2\d t\right)^{\frac{q}{2}}\right]\right\}^{\frac{1}{q}}\nonumber\\&\leq 4CK\sqrt{q\e}\left\{\E\left[\left(\int_0^T\left(1+\|\u^{\e}(t)\|_{\H}^2\right)\|\u^{\e}(t)\|_{\H}^2\d t\right)^{\frac{q}{2}}\right]\right\}^{\frac{1}{q}}\nonumber\\&\leq 4CK\sqrt{q\e}\left[\left\{\E\left[\left(\int_0^T\left(1+\|\u^{\e}(t)\|_{\H}^2\right)^2\d t\right)^{\frac{q}{2}}\right]\right\}^{\frac{2}{q}}\right]^{\frac{1}{2}}\nonumber\\&\leq 4CK\sqrt{2q\e}\left[\left\{\E\left[\left(\int_0^T\left(1+\|\u^{\e}(t)\|_{\H}^4\right)\d t\right)^{\frac{q}{2}}\right]\right\}^{\frac{2}{q}}\right]^{\frac{1}{2}}\nonumber\\&\leq  8CK\sqrt{q\e}\left\{\int_0^T\left(1+\left[\E\left(\|\u^{\e}(t)\|_{\H}^{2q}\right)\right]^{\frac{2}{q}}\right)\d t\right\}^{\frac{1}{2}}.
		\end{align}
		Moreover, by using Minkowski's integral inequality, we have 
		\begin{align}\label{3.16}
			&2\e K\left\{\E\left[\left(\int_0^T	\left(|\u^{\e}|_{\V_p,\wi\L^{r}}^{\H}(t)\right)^2 \d t\right)^{q}\right]\right\}^{\frac{1}{q}}\leq 2\e K\int_0^T\left[\E\left(|\u^{\e}|_{\V_p,\wi\L^{r}}^{\H}(t)\right)^{2q}\right]^{\frac{1}{q}}\d t.
		\end{align}
		Substituting \eqref{315} and \eqref{3.16}  in \eqref{313}, we deduce that 
		\begin{align}
			&	\left\{\E\left[	\left(|\u^{\e}|_{\V_p,\wi\L^{r}}^{\H}(T)\right)^{2q}\right]\right\}^{\frac{2}{q}}\nonumber\\&\leq 8\left(\|\x\|_{\H}^2+\e KT+C(p,\mu)\int_0^{T\e}\|\f(t)\|_{\V_p'}^{p'}\d t\right)^2+8\e^2 K^2T\int_0^T\left[\E\left(|\u^{\e}|_{\V_p,\wi\L^{r}}^{\H}(t)\right)^{2q}\right]^{\frac{2}{q}}\d t\nonumber\\&\quad+128C^2K^2{q\e} T+128C^2K^2{q\e}\int_0^T\left[\E\left(|\u^{\e}|_{\V_p,\wi\L^{r}}^{\H}(t)\right)^{2q}\right]^{\frac{2}{q}}\d t.
		\end{align}
		An application of Gronwall's inequality yields 
		\begin{align}\label{318}
			&	\left\{\E\left[	\left(|\u^{\e}|_{\V_p,\wi\L^{r}}^{\H}(T)\right)^{2q}\right]\right\}^{\frac{2}{q}}\nonumber\\&\leq\left\{8\left(\|\x\|_{\H}^2+\e KT+C(p,\mu)\int_0^{T\e}\|\f(t)\|_{\V_p'}^{p'}\d t\right)^2+128C^2K^2{q\e} T\right\}\nonumber\\&\quad\times\exp\left\{8\e^2 K^2T^2+128C^2K^2{q\e}T\right\}.
		\end{align}
		By Markov's inequality, we have 
		\begin{align*}
			\mathbb{P}\left\{\left(|\u^{\e}|_{\V_p,\wi\L^{r}}^{\H}(T)\right)^{2}>M\right\}\leq M^{-q}\E\left[	\left(|\u^{\e}|_{\V_p,\wi\L^{r}}^{\H}(T)\right)^{2q}\right].
		\end{align*}
		Letting $q=\frac{1}{\e}$ in \eqref{318}, we get 
		\begin{align}
			&	\e\log \mathbb{P}\left\{\left(|\u^{\e}|_{\V_p,\wi\L^{r}}^{\H}(T)\right)^{2}>M\right\}\nonumber\\&\leq -\log M+ \log 	\left\{\E\left[	\left(|\u^{\e}|_{\V_p,\wi\L^{r}}^{\H}(T)\right)^{2q}\right]\right\}^{\frac{1}{q}}\nonumber\\&\leq -\log M+\log\sqrt{\left\{8\left(\|\x\|_{\H}^2+\e KT+C(p,\mu)\int_0^{T\e}\|\f(t)\|_{\V_p'}^{p'}\d t\right)^2+128C^2K^2 T\right\}}\nonumber\\&\quad+8\e^2K^2T^2+128C^2K^2T. 
		\end{align}
		Therefore, it is immediate that 
		\begin{align*}
			&\sup_{0<\e\leq 1}\e \mathbb{P}\left\{\left(|\u^{\e}|_{\V_p,\wi\L^{r}}^{\H}(T)\right)^{2}>M\right\}\nonumber\\&\leq -\log M+\log\sqrt{\left\{8\left(\|\x\|_{\H}^2+ KT+C(p,\mu)\int_0^{T}\|\f(t)\|_{\V_p'}^{p'}\d t\right)^2+128C^2K^2 T\right\}}\nonumber\\&\quad+8K^2T^2+128C^2K^2T. 
		\end{align*}
		Letting $M\to\infty$ in the above expression leads to the estimate \eqref{39}. 
	\end{proof}
	
	Since $\mathscr{V}\subset\V_p\cap\wi\L^{r}\subset\H$ and $\mathscr{V}$ is dense in $\H$ implies that $\V_p\cap\wi\L^{r}$ is also dense in $\H$.  Let $\{\u_0^n\}$ be a sequence in $\V_p\cap\wi\L^{r}$ such that $\|\u_0^n-\u_0\|_{\H}\to 0$, as $n\to\infty$. Let $\u_n^{\e}(\cdot)$ be the solution of \eqref{37} with the initial value $\u_0^n$. From the proof of Lemma \ref{lem3.7}, it follows that
	\begin{align}\label{320}
		\lim_{M\to\infty}\sup_{n}\sup_{0<\e\leq 1}\e\log\mathbb{P}\left\{\left(|\u^{\e}_n|_{\V_p,\wi\L^{r}}^{\H}(T)\right)^2>M\right\}=-\infty.
	\end{align}
	Let $\v_n^{\e}(\cdot)$ be the solution of \eqref{3.7} with the initial value $\u_0^n$. Then, we have the following result: 
	\begin{lemma}\label{lem3.8}
		Under hypothesis \ref{hyp1}, 	for any $n\in\mathbb{Z}^+$, we have 
		\begin{align}
			\lim_{M\to\infty}\sup_{0<\e\leq 1}\e\log\mathbb{P}\left\{\sup_{0\leq t\leq T}\|\v_n^{\e}(t)\|_{\V}^2>M\right\}&=-\infty,\label{3p22}\\
			\lim_{M\to\infty}\sup_{0<\e\leq 1}\e\log\mathbb{P}\left\{\sup_{0\leq t\leq T}\|\v_n^{\e}(t)\|_{\V_p}^p>M\right\}&=-\infty\label{3p23},
		\end{align}and
		\begin{align}
			\lim_{M\to\infty}\sup_{0<\e\leq 1}\e\log\mathbb{P}\left\{\sup_{0\leq t\leq T}\|\v_n^{\e}(t)\|_{\wi\L^{r}}^{r}>M\right\}=-\infty,\label{3p24}
		\end{align}
		for $p\geq 2$ and $r\geq 2$. 
	\end{lemma}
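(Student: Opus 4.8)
The plan is to exploit the fact that, unlike $\u_n^{\e}(\cdot)$, the process $\v_n^{\e}(\cdot)$ solves the \emph{driftless} stochastic equation \eqref{3.7}, so that no monotonicity of the operator $\G(\cdot)$ is needed; the three estimates follow by applying It\^o's formula to the functionals $\|\cdot\|_{\V_2}^2$, $\|\cdot\|_{\V_p}^p$ and $\|\cdot\|_{\wi\L^{r}}^{r}$ respectively, and then running exactly the moment--Gronwall--Markov scheme already used in Lemma \ref{lem3.7}. The role of Hypothesis \ref{hyp1} is precisely to guarantee that $\boldsymbol{\sigma}(\e\cdot,\v_n^{\e}(\cdot))$ is a legitimate integrand in each of these spaces: \eqref{4p8} makes it Hilbert--Schmidt from $\ell_2$ into the Hilbert space $\V_2$, while \eqref{4p9} and \eqref{4p10} make it $\gamma$-radonifying from $\ell_2$ into the $2$-smooth Banach spaces $\V_p$ and $\wi\L^{r}$ (as verified in the computation preceding Theorem \ref{maint}), so that the Banach-space It\^o formula of the type used in Theorem \ref{exis2} (cf. \cite{GK2,MTM8}) is available.

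First I would treat the $\V_2$-estimate \eqref{3p22}. Applying the Hilbert-space It\^o formula to $\|\v_n^{\e}(\cdot)\|_{\V_2}^2$ and using that \eqref{3.7} has no drift gives
\begin{align*}
\|\v_n^{\e}(t)\|_{\V_2}^2&=\|\u_0^n\|_{\V_2}^2+\e\int_0^t\|\boldsymbol{\sigma}(\e s,\v_n^{\e}(s))\|_{\mathcal{L}_2(\ell_2;\V_2)}^2\d s\\&\quad+2\sqrt{\e}\sum_{k=1}^{\infty}\int_0^t(\boldsymbol{\sigma}_k(\e s,\v_n^{\e}(s)),\v_n^{\e}(s))_{\V_2}\d\boldsymbol{W}_k(s).
\end{align*}
The trace term is bounded by $\e\widehat{K}\int_0^t(1+\|\v_n^{\e}(s)\|_{\V_2}^2)\d s$ via \eqref{4p8} (note $\u_0^n\in\V_p\subset\V_2$ since $p\geq 2$). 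Taking the supremum over $t\in[0,T]$, raising to the $q$-th power with $q\geq 2$, controlling the martingale term by the inequality \eqref{3.15} together with \eqref{4p8} and Minkowski's integral inequality, and finally applying Gronwall's inequality, one obtains a bound of the form
\begin{align*}
\left\{\E\Big[\sup_{0\leq t\leq T}\|\v_n^{\e}(t)\|_{\V_2}^{2q}\Big]\right\}^{\frac{2}{q}}\leq C_1\big(\|\u_0^n\|_{\V_2},\widehat{K},T\big)\exp\left\{C_2\widehat{K}^2(\e^2+q\e)T\right\},
\end{align*}
which is the exact analogue of \eqref{318}. Choosing $q=\frac{1}{\e}$ makes the exponent uniformly bounded in $\e\in(0,1]$, and Markov's inequality then yields $\e\log\mathbb{P}\big\{\sup_{t}\|\v_n^{\e}(t)\|_{\V_2}^2>M\big\}\leq -\log M+C_3$ uniformly in $0<\e\leq 1$; letting $M\to\infty$ gives \eqref{3p22}.

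The estimates \eqref{3p23} and \eqref{3p24} follow the same template, but with the It\^o formula applied in the $2$-smooth Banach spaces $\V_p$ and $\wi\L^{r}$ to the functionals $\|\cdot\|_{\V_p}^p$ and $\|\cdot\|_{\wi\L^{r}}^{r}$. Here the second-order (quadratic-variation) contribution produced by It\^o's formula is controlled by the $\gamma$-radonifying norms $\|\boldsymbol{\sigma}(\e s,\v_n^{\e}(s))\|_{\gamma(\ell_2;\V_p)}^2$ and $\|\boldsymbol{\sigma}(\e s,\v_n^{\e}(s))\|_{\gamma(\ell_2;\wi\L^{r})}^2$ multiplied by a factor $\|\v_n^{\e}(s)\|_{\V_p}^{p-2}$ (resp. $\|\v_n^{\e}(s)\|_{\wi\L^{r}}^{r-2}$); using \eqref{4p9}--\eqref{4p10} and Young's inequality these are absorbed into $C\e\big(1+\|\v_n^{\e}(s)\|_{\V_p}^p\big)$ (resp. $C\e\big(1+\|\v_n^{\e}(s)\|_{\wi\L^{r}}^{r}\big)$), after which the identical supremum/moment/martingale/Gronwall/$q=1/\e$/Markov argument closes the estimate.

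The main obstacle is not the probabilistic scaling, which is routine once Lemma \ref{lem3.7} is in hand, but the rigorous justification of the It\^o formula for the non-Hilbertian functionals $\|\cdot\|_{\V_p}^p$ and $\|\cdot\|_{\wi\L^{r}}^{r}$ and the precise form of the resulting trace term in the $2$-smooth Banach space framework. This is exactly where the $\gamma$-radonifying bounds established from Hypothesis \ref{hyp1} are essential, and where the construction of suitable finite-dimensional approximations (in the spirit of \cite{CLF} and of Theorem 3.7 in \cite{MTM8}, invoked in the proof of Theorem \ref{exis2}) guarantees that the formula and the ensuing moment estimates are valid simultaneously in all three norms.
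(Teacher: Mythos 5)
Your proposal is correct and follows essentially the same route as the paper: exploit that \eqref{3.7} is driftless, apply It\^o's formula to $\|\cdot\|_{\V_2}^2$, $\|\cdot\|_{\V_p}^p$ and $\|\cdot\|_{\wi\L^r}^r$ (the latter two via the $2$-smooth Banach space formula, cf. Theorem A.1 of \cite{ZBSP}), bound the trace/correction terms by H\"older and Hypothesis \ref{hyp1}, and then run the martingale-moment, Gronwall, $q\sim 1/\e$, Markov scheme of Lemma \ref{lem3.7}. The only cosmetic difference is that you phrase the second-order term through $\gamma$-radonifying norms while the paper writes it explicitly as $\frac{p(p-1)}{2}\sum_k\langle|\nabla\v_n^{\e}|^{p-2},|\nabla\boldsymbol{\sigma}_k|^2\rangle$ and applies H\"older; these are the same estimate.
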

	\begin{proof}
		Applying infinite-dimensional It\^o's formula to the process $\|\v_n^{\e}(\cdot)\|_{\V}^2$, we get for all $t\in[0,T]$,	$\mathbb{P}$-a.s., 
		\begin{align}
			\|\nabla\v_n^{\e}(t)\|_{\H}^2&=\|\nabla\u_0^n\|_{\H}^2+\e\sum_{k=1}^{\infty}\int_0^t\|\nabla\boldsymbol{\sigma}_k(\e s,\v_n^{\e}(s))\|_{\H}^2\d s\nonumber\\&\quad+2\sqrt{\e}\sum_{k=1}^{\infty}\int_0^t\big(\nabla\boldsymbol{\sigma}_k(\e s,\v_n^{\e}(s))\d\boldsymbol{W}_k(s),\nabla\v_n^{\e}(s)\big).
		\end{align}
		By using Hypothesis \ref{hyp} (H.1) and \eqref{3.15}, we obtain 
		\begin{align}\label{3.25}
			&\left\{\E\left[\sup_{0\leq t\leq T}\|\v_n^{\e}(t)\|_{\V}^{2q}\right]\right\}^{\frac{2}{q}}\nonumber\\&\leq 2\|\u_0^n\|_{\V}^4+2\e^2\left\{\mathbb{E}\left[\left(\sum_{k=1}^{\infty}\int_0^T\|\nabla\boldsymbol{\sigma}_k(\e s,\v_n^{\e}(s))\|_{\H}^2\d s\right)^{q}\right]\right\}^{\frac{2}{q}}\nonumber\\&\quad+8\e\left\{\E\left[\sup_{0\leq t\leq T}\left|\sum_{k=1}^{\infty}\int_0^t\big(\nabla\boldsymbol{\sigma}_k(\e s,\v_n^{\e}(s))\d\boldsymbol{W}_k(s),\nabla\v_n^{\e}(s)\big) \right|\right]^{q}\right\}^{\frac{2}{q}}\nonumber\\&\leq 2\|\u_0^n\|_{\V}^4+2\e^2\widehat{K}^2\left\{\E\left[\left(\int_0^T(1+\|\v_n^{\e}(s)\|_{\V}^2)\d s\right)^q\right]\right\}^{\frac{2}{q}}\nonumber\\&\quad+8Cq\e\left\{\E\left[\left(\sum_{k=1}^{\infty}\int_0^T\|\v_n^{\e}(s)\|_{\V}^2\|\nabla\boldsymbol{\sigma}_k(\e s,\v_n^{\e}(s))\|_{\H}^2\d s\right)^{\frac{q}{2}}\right]\right\}^{\frac{2}{q}}\nonumber\\&\leq 2\|\u_0^n\|_{\V}^4+4\e^2\widehat{K}^2T\left\{T+\int_0^T\left[\E\left(\sup_{0\leq r\leq s}\|\v_n^{\e}(r)\|_{\V}^{2q}\right)\right]^{\frac{2}{q}}\d s\right\}\nonumber\\&\quad+16 Cq\e\widehat{K}\left\{T+\int_0^T\left[\E\left(\sup_{0\leq r\leq s}\|\v_n^{\e}(r)\|_{\V}^{2q}\right)\right]^{\frac{2}{q}}\d s\right\}. 
		\end{align}
		An application of Gronwall's inequality yields 
		\begin{align*}
			\left\{\E\left[\sup_{0\leq t\leq T}\|\v_n^{\e}(t)\|_{\V}^{2q}\right]\right\}^{\frac{2}{q}}\leq \left\{ 2\|\u_0^n\|_{\V}^4+4\e^2\widehat{K}^2T^2+16 Cq\e\widehat{K}T\right\}e^{4\e^2\widehat{K}^2T^2+16 Cq\e\widehat{K}T},
		\end{align*}
		and the proof of \eqref{3p22} can be completed by using similar arguments as in the proof of Lemma \ref{lem3.7}. 
		
		For some $p\geq 2$, applying the infinite-dimensional formula to the process $\|\v_n^{\e}(\cdot)\|_{\V_p}^p,$ we obtain for all $t\in[0,T]$,	$\mathbb{P}$-a.s.,  (see \cite[Theorem A.1]{ZBSP})
		\begin{align}
			\|\v_n^{\e}(t)\|_{\V_p}^p&=\|\u_0^n\|_{\V_p}^p+\frac{p(p-1)\e}{2}\sum_{k=1}^{\infty}\int_0^t\langle|\nabla\v^{\e}_n(s)|^{p-2},|\nabla\boldsymbol{\sigma}_k(\e s,\v_n^{\e}(s))|^2\rangle\d s\nonumber\\&\quad+p\sqrt{\e}\sum_{k=1}^{\infty}\int_0^t\big\langle|\nabla\v^{\e}_n(s)|^{p-2}\nabla\v^{\e}_n(s),\nabla\boldsymbol{\sigma}_k(\e s,\v_n^{\e}(s))\d\boldsymbol{W}_k(s)\big\rangle,
		\end{align}
		For $q\geq 2$, using Hypothesis \ref{hyp1} (H.3) and \eqref{3.15}, we find 
		\begin{align}\label{3p28}
			&\left\{\E\left[\sup_{0\leq t\leq T}\|\v_n^{\e}(t)\|_{\V_p}^{pq}\right]\right\}^{\frac{2}{q}}\nonumber\\&\leq 2\|\u_0^n\|_{\V_p}^{2p}+p^2(p-1)^2\e^2\left\{\E\left[\left(\sum_{k=1}^{\infty}\int_0^T\|\nabla\v_n^{\e}(s)\|_{\wi\L^p}^{p-2}\|\nabla\boldsymbol{\sigma}_k(\e s,\v_n^{\e}(s))\|_{\wi\L^p}^2\d s\right)^q\right]\right\}^{\frac{2}{q}}\nonumber\\&\quad+2p^2\e\left\{\E\left[\sup_{0\leq t\leq T}\left|\sum_{k=1}^{\infty}\int_0^t\big\langle|\nabla\v^{\e}_n(s)|^{p-2}\nabla\v^{\e}_n(s),\nabla\boldsymbol{\sigma}_k(\e s,\v_n^{\e}(s))\d\boldsymbol{W}_k(s)\big\rangle\right|\right]^q\right\}^{\frac{2}{q}}\nonumber\\&\leq 2\|\u_0^n\|_{\V_p}^{2p}+p^2(p-1)^2\e^2\wi K^2\left\{\E\left[\left(\int_0^T\|\nabla\v^{\e}_n(s)\|_{\wi\L^p}^{p-2}\left(1+\|\nabla\v^{\e}_n(s)\|_{\wi\L^p}^2\right)\d s\right)^q\right]\right\}^{\frac{2}{q}}\nonumber\\&\quad+2Cp^2q\e\left\{\E\left[\left(\int_0^T\sum_{k=1}^{\infty}\|\nabla\v^{\e}_n(s)\|_{\wi\L^p}^{2(p-1)}\|\nabla\boldsymbol{\sigma}_k(\e s,\v_n^{\e}(s))\|_{\wi\L^p}^2\d s\right)^{\frac{q}{2}}\right]\right\}^{\frac{2}{q}}\nonumber\\&\leq  2\|\u_0^n\|_{\V_p}^{2p}+2p^2(p-1)^2\e^2\wi K^2\left\{T+\int_0^T\left[\E\left(\sup_{0\leq r\leq s}\|\nabla\v_n^{\e}(r)\|_{\wi\L^p}^{pq}\right)\right]^{\frac{2}{q}}\d s\right\}\nonumber\\&\quad+2Cp^2q\e\wi K\left\{\E\left[\left(\int_0^T\|\nabla\v^{\e}_n(s)\|_{\wi\L^p}^{2(p-1)}(1+\|\nabla\v^{\e}_n(s)\|_{\wi\L^p}^2)\d s\right)^{\frac{q}{2}}\right]\right\}^{\frac{2}{q}} \nonumber\\&\leq  2\|\u_0^n\|_{\V_p}^{2p}+2p^2(p-1)^2\e^2\wi K^2\left\{T+\int_0^T\left[\E\left(\sup_{0\leq r\leq s}\|\v_n^{\e}(r)\|_{\V_p}^{pq}\right)\right]^{\frac{2}{q}}\d s\right\}\nonumber\\&\quad+4Cp^2q\e\wi K\left\{T+\int_0^T\left[\E\left(\sup_{0\leq r\leq s}\|\v_n^{\e}(r)\|_{\V_p}^{pq}\right)\right]^{\frac{2}{q}}\d s\right\}.
		\end{align}
		An application of Gronwall's inequality in \eqref{3p28} yields 
		\begin{align}
			&\left\{\E\left[\sup_{0\leq t\leq T}\|\v_n^{\e}(t)\|_{\V_p}^{pq}\right]\right\}^{\frac{2}{q}}\nonumber\\&\leq \left\{2\|\u_0^n\|_{\V_p}^{2p}+2p^2(p-1)^2\e^2\wi K^2T+4Cp^2q\e\wi KT\right\}e^{2p^2(p-1)^2\e^2\wi K^2T+4Cp^2q\e\wi KT},
		\end{align}
		and the rest of the proof  of \eqref{3p23} can be completed by using similar arguments as in the proof of Lemma \ref{lem3.7}. 
		
		For some $r\geq 2$, applying the infinite-dimensional formula to the process $\|\v_n^{\e}(\cdot)\|_{\wi\L^r}^r,$  using Hypothesis \ref{hyp1} (H.3), and then applying similar arguments as in the previous case one can obtain the estimate \eqref{3p24}. 
	\end{proof}

	\begin{lemma}\label{lem3.9}
		For any $\delta>0$, we have 
		\begin{align}
			\lim_{n\to\infty}\sup_{0<\e\leq 1}\e\log\mathbb{P}\left\{\sup_{0\leq t\leq T}\|\v_n^{\e}(t)-\v^{\e}(t)\|_{\H}^2>\delta \right\}=-\infty. 
		\end{align}
	\end{lemma}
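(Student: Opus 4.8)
The plan is to exploit the fact that, unlike the comparison between $\u_n^{\e}$ and $\v_n^{\e}$, the difference considered here involves \emph{two solutions of the same driftless equation} \eqref{3.7}. Setting $\z_n^{\e}:=\v_n^{\e}-\v^{\e}$, both processes satisfy \eqref{3.7} (which contains no occurrence of $\mathcal{A}$, $\B$ or $\mathcal{C}$), so subtracting gives
\begin{align*}
\z_n^{\e}(t)=(\u_0^n-\u_0)+\sqrt{\e}\sum_{k=1}^{\infty}\int_0^t\big[\boldsymbol{\sigma}_k(\e s,\v_n^{\e}(s))-\boldsymbol{\sigma}_k(\e s,\v^{\e}(s))\big]\d\boldsymbol{W}_k(s).
\end{align*}
The crucial structural point is that this equation has \emph{no} nonlinear drift contribution, so none of the monotonicity estimates of Section~\ref{sec2} are required; the whole argument is powered solely by the Lipschitz condition Hypothesis \ref{hyp} (H.2). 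First I would apply the infinite dimensional It\^o formula to $\|\z_n^{\e}(\cdot)\|_{\H}^2$ to obtain
\begin{align*}
\|\z_n^{\e}(t)\|_{\H}^2&=\|\u_0^n-\u_0\|_{\H}^2+\e\sum_{k=1}^{\infty}\int_0^t\big\|\boldsymbol{\sigma}_k(\e s,\v_n^{\e}(s))-\boldsymbol{\sigma}_k(\e s,\v^{\e}(s))\big\|_{\H}^2\d s\\
&\quad+2\sqrt{\e}\sum_{k=1}^{\infty}\int_0^t\big(\boldsymbol{\sigma}_k(\e s,\v_n^{\e}(s))-\boldsymbol{\sigma}_k(\e s,\v^{\e}(s)),\z_n^{\e}(s)\big)\d\boldsymbol{W}_k(s),
\end{align*}
in which (H.2) bounds the Lebesgue term by $\e L\int_0^t\|\z_n^{\e}(s)\|_{\H}^2\d s$.

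Next I would reproduce the moment machinery of Lemma \ref{lem3.7}. Taking the supremum over $t\in[0,T]$, raising to the power $q\geq 2$ and taking expectations, I would control the martingale term by the martingale inequality \eqref{3.15}: its quadratic variation is $\sum_k\int_0^T\|\boldsymbol{\sigma}_k(\e s,\v_n^{\e})-\boldsymbol{\sigma}_k(\e s,\v^{\e})\|_{\H}^2\|\z_n^{\e}(s)\|_{\H}^2\d s\leq L\int_0^T\|\z_n^{\e}(s)\|_{\H}^4\d s$ by (H.2) again. Using Minkowski's integral inequality on both the Lebesgue and the quadratic-variation integrals (and Cauchy--Schwarz to absorb the square of the Lebesgue integral), I expect to arrive at an inequality of the form
\begin{align*}
\Big\{\E\big[\sup_{0\leq t\leq T}\|\z_n^{\e}(t)\|_{\H}^{2q}\big]\Big\}^{\frac{2}{q}}\leq 3\|\u_0^n-\u_0\|_{\H}^4+\big(3\e^2L^2T+12C^2 q\e L\big)\int_0^T\Big[\E\big(\sup_{0\leq r\leq s}\|\z_n^{\e}(r)\|_{\H}^{2q}\big)\Big]^{\frac{2}{q}}\d s,
\end{align*}
after which Gronwall's inequality yields
\begin{align*}
\Big\{\E\big[\sup_{0\leq t\leq T}\|\z_n^{\e}(t)\|_{\H}^{2q}\big]\Big\}^{\frac{2}{q}}\leq 3\|\u_0^n-\u_0\|_{\H}^4\,\exp\big\{(3\e^2L^2T+12C^2 q\e L)T\big\}.
\end{align*}

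Finally I would choose $q=\frac1\e$, so that the exponent $12C^2q\e L T=12C^2LT$ stays bounded uniformly in $\e$, and apply Markov's inequality exactly as in Lemma \ref{lem3.7}: this gives
\begin{align*}
\e\log\mathbb{P}\Big\{\sup_{0\leq t\leq T}\|\v_n^{\e}(t)-\v^{\e}(t)\|_{\H}^2>\delta\Big\}\leq-\log\delta+\log\big(\sqrt3\,\|\u_0^n-\u_0\|_{\H}^2\big)+\tfrac12\big(3\e^2L^2T^2+12C^2LT\big).
\end{align*}
Taking $\sup_{0<\e\leq 1}$ keeps the right-hand side bounded above by $-\log\delta+\log(\sqrt3\,\|\u_0^n-\u_0\|_{\H}^2)+\tfrac12(3L^2T^2+12C^2LT)$, and since $\|\u_0^n-\u_0\|_{\H}\to 0$ as $n\to\infty$ by the choice of the approximating sequence, the $\log\|\u_0^n-\u_0\|_{\H}^2$ term drives the bound to $-\infty$, which is exactly the claim. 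The only point requiring care — and what I regard as the (mild) main obstacle — is the bookkeeping that makes $q\e=1$ cancel in the BDG-generated constant so that the exponential factor remains uniformly bounded as $\e\to0$; conceptually the lemma is easy precisely because the subtraction eliminates the highly nonlinear unbounded drift, leaving a purely Lipschitz stochastic perturbation of the initial datum.
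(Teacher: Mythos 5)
Your proposal is correct and follows essentially the same route as the paper's own proof of Lemma \ref{lem3.9}: It\^o's formula for $\|\v_n^{\e}-\v^{\e}\|_{\H}^2$, the Lipschitz hypothesis (H.2) on both the Lebesgue term and the quadratic variation, the martingale moment inequality \eqref{3.15} with Minkowski and Gronwall, then $q\sim 1/\e$ and Markov's inequality, with the conclusion driven by $\|\u_0^n-\u_0\|_{\H}\to 0$. The only differences are harmless constant bookkeeping (the paper takes $q=2/\e$), and your use of the $\H$-norm of $\u_0^n-\u_0$ in the final bound is actually the correct reading of the paper's display, which writes $\|\u_0^n-\u_0\|_{\V_2}$ there apparently by oversight.
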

	\begin{proof}
		Applying infinite-dimensional It\^o's formula to the process $\|\v^{\e}_n(\cdot)-\v^{\e}(\cdot)\|_{\H}^2$, we find for all $t\in[0,T]$,	$\mathbb{P}$-a.s., 
		\begin{align*}
			&	\|\v^{\e}_n(t)-\v^{\e}(t)\|_{\H}^2\nonumber\\&=\|\u_0^n-\u_0\|_{\H}^2+\e\sum_{k=1}^{\infty}\int_0^t\|\boldsymbol{\sigma}_k(\e s,\v^{\e}_n(s))-\boldsymbol{\sigma}_k(\e s,\v^{\e}(s))\|_{\H}^2\d s\nonumber\\&\quad+2\sqrt{\e}\sum_{k=1}^{\infty}\int_0^t\big(\boldsymbol{\sigma}_k(\e s,\v_n^{\e}(s))-\boldsymbol{\sigma}_k(\e s,\v^{\e}(s))\d\boldsymbol{W}_k(s),\v^{\e}_n(s)-\v^{\e}(s)\big).
		\end{align*}
		Using calculations similar to \eqref{3.25} yield 
		\begin{align*}
			&\left\{\E\left[\sup_{0\leq t\leq T}\|\v_n^{\e}(t)-\v^{\e}(t)\|_{\V}^{2q}\right]\right\}^{\frac{2}{q}}\nonumber\\&\leq 2\|\u_0^n-\u_0\|_{\V}^4+(4\e^2L^2T+16 Cq\e L^2)\int_0^T\left[\E\left(\sup_{0\leq r\leq t}\|\v_n^{\e}(s)-\v_{\e}(s)\|_{\V}^{2q}\right)\right]^{\frac{2}{q}}\d s.
		\end{align*}
		Applying Gronwall's inequality, we get 
		\begin{align*}
			\left\{\E\left[\sup_{0\leq t\leq T}\|\v_n^{\e}(t)-\v^{\e}(t)\|_{\V}^{2q}\right]\right\}^{\frac{2}{q}}\leq 2\|\u_0^n-\u_0\|_{\V}^4e^{(4\e^2L^2T+16 Cq\e L^2)T}.
		\end{align*}
		Taking $q=\frac{2}{\e}$ and applying Markov's inequality, we obtain 
		\begin{align}\label{3.27}
			&\sup_{0<\e\leq 1}\e\log\mathbb{P}\left\{\sup_{0\leq t\leq T}\|\v^{\e}(t)-\v^{\e}_n(t)\|_{\H}^2>\delta \right\}\nonumber\\&\quad\leq\sup_{0<\e\leq 1}\e\log\Bigg\{\frac{\E\left[\sup\limits_{0\leq t\leq T}\|\v^{\e}(t)-\v^{\e}_n(t)\|_{\H}^{2q}\right]}{\delta^q}\Bigg\}\nonumber\\&\quad\leq (4L^2T+32C L^2)T+\log (2\|\u_0^n-\u_0\|_{\V}^4)-2\log\delta \nonumber\\&\quad\to-\infty,\ \text{ as }\ n\to\infty,
		\end{align}
		for any $\delta>0$, which completes the proof. 
	\end{proof}

	\begin{lemma}\label{lem3.10}
		For any $\delta>0$, we have 
		\begin{align}
			\lim_{n\to\infty}\sup_{0<\e\leq 1}\e\log\mathbb{P}\left\{\sup_{0\leq t\leq T}\|\u^{\e}_n(t)-\u^{\e}(t)\|_{\H}^2>\delta \right\}=-\infty. 
		\end{align}
	\end{lemma}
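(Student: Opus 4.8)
The plan is to apply the infinite-dimensional It\^o formula to the process $\|\u^{\e}_n(\cdot)-\u^{\e}(\cdot)\|_{\H}^2$ and to control the nonlinear drift difference by the (local) monotonicity estimates of Section \ref{sec2}. Both $\u^{\e}$ and $\u^{\e}_n$ are the strong solutions of \eqref{37} (with common forcing $\f$ and initial data $\u_0$, $\u_0^n$, respectively) furnished by Theorem \ref{exis2}, so writing $\w^{\e}_n:=\u^{\e}_n-\u^{\e}$ and letting $\G(\u)=\mu\mathcal{A}(\u)+\B(\u)+\beta\mathcal{C}(\u)$ denote the drift operator,
$$\|\w^{\e}_n(t)\|_{\H}^2=\|\u_0^n-\u_0\|_{\H}^2-2\e\int_0^t\langle\G(\u^{\e}_n(s))-\G(\u^{\e}(s)),\w^{\e}_n(s)\rangle\d s+\e\int_0^t\|\boldsymbol{\sigma}(\e s,\u^{\e}_n(s))-\boldsymbol{\sigma}(\e s,\u^{\e}(s))\|_{\mathcal{L}_2(\ell_2;\H)}^2\d s+2\sqrt{\e}\,M^{\e}_n(t),$$
where $M^{\e}_n$ is the martingale $\sum_{k}\int_0^{\cdot}(\boldsymbol{\sigma}_k(\e s,\u^{\e}_n)-\boldsymbol{\sigma}_k(\e s,\u^{\e}),\w^{\e}_n)\d\boldsymbol{W}_k$. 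The diffusion term is handled by the Lipschitz condition (H.2) exactly as in Lemma \ref{lem3.9}; the new ingredient is the drift pairing, which is only locally monotone.

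Because the drift is only locally monotone, I would first localize. By the local monotonicity of Lemma \ref{lem2.5}, in the sharp form in which the constant radius $N^{2p/(2p-d)}$ of \eqref{mon1} is replaced by the time-dependent weight $\|\u^{\e}(s)\|_{\V_p}^{2p/(2p-d)}$ coming from \eqref{225}, one has
$$-2\e\langle\G(\u^{\e}_n(s))-\G(\u^{\e}(s)),\w^{\e}_n(s)\rangle\le 2\e\widetilde{\eta}\,\|\u^{\e}(s)\|_{\V_p}^{\frac{2p}{2p-d}}\|\w^{\e}_n(s)\|_{\H}^2.$$
This weight is random and unbounded, so I introduce the stopping time $\tau^{\e}_{n,M}$ at which either $(|\u^{\e}_n|_{\V_p,\wi\L^{r}}^{\H}(\cdot))^2$ or $(|\u^{\e}|_{\V_p,\wi\L^{r}}^{\H}(\cdot))^2$ first reaches $M$. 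On $[0,\tau^{\e}_{n,M}]$ the energy functional of Lemma \ref{lem3.7} stays $\le M$, whence $\int_0^{\tau^{\e}_{n,M}}\|\u^{\e}(s)\|_{\V_p}^p\d s\le 2M/(\e\mu)$, and the interpolation \eqref{2p33} gives
$$2\e\widetilde{\eta}\int_0^{t\wedge\tau^{\e}_{n,M}}\|\u^{\e}(s)\|_{\V_p}^{\frac{2p}{2p-d}}\d s\le 2\widetilde{\eta}\,T^{\frac{2p-d-2}{2p-d}}\Big(\frac{2M}{\mu}\Big)^{\frac{2}{2p-d}}\e^{\frac{2p-d-2}{2p-d}}=:B(M)\,\e^{\frac{2p-d-2}{2p-d}}.$$
This is precisely where the hypothesis $p\ge\frac d2+1$ enters: the exponent $\frac{2p-d-2}{2p-d}$ is nonnegative, so for $\e\in(0,1]$ the factor $\e^{(2p-d-2)/(2p-d)}\le 1$ and the accumulated drift defect is bounded by $B(M)$ uniformly in $\e$. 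In the globally monotone regime $p\ge 2,\ r\ge 4$ ($\beta\mu>1$ for $r=4$) this localization is unnecessary, since \eqref{mon} (respectively \eqref{2.18}) replaces the weight by the fixed constant $\eta$ (respectively by $0$).

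With the drift defect under control, a pathwise Gronwall inequality up to $\tau^{\e}_{n,M}$ (treating $2\sqrt{\e}\sup_{s}|M^{\e}_n(s\wedge\tau^{\e}_{n,M})|$ as a nondecreasing forcing and using (H.2) for the quadratic-variation drift) yields
$$\sup_{0\le s\le T}\|\w^{\e}_n(s\wedge\tau^{\e}_{n,M})\|_{\H}^2\le e^{B(M)+\e LT}\Big[\|\u_0^n-\u_0\|_{\H}^2+2\sqrt{\e}\sup_{0\le s\le T}|M^{\e}_n(s\wedge\tau^{\e}_{n,M})|\Big].$$
Taking $\mathrm{L}^q(\Omega)$-norms, estimating the martingale by \eqref{3.15} together with the bound $\langle M^{\e}_n\rangle_t\le L\int_0^t\|\w^{\e}_n\|_{\H}^4\d s$, and keeping the quadratic variation as a time integral of moments rather than pulling out the supremum (as in \eqref{315}--\eqref{3.16}), I obtain a linear Gronwall inequality for $g(t):=\{\E[\sup_{s\le t}\|\w^{\e}_n(s\wedge\tau^{\e}_{n,M})\|_{\H}^{2q}]\}^{2/q}$. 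Choosing $q=1/\e$ makes the Gronwall constant independent of $\e$, so that $\{\E[\sup_{0\le s\le T}\|\w^{\e}_n(s\wedge\tau^{\e}_{n,M})\|_{\H}^{2q}]\}^{1/q}\le C(M)\,\|\u_0^n-\u_0\|_{\H}^2$, and Markov's inequality with $q=1/\e$ gives
$$\sup_{0<\e\le 1}\e\log\mathbb{P}\Big\{\sup_{0\le s\le T\wedge\tau^{\e}_{n,M}}\|\w^{\e}_n(s)\|_{\H}^2>\delta\Big\}\le\log C(M)-\log\delta+2\log\|\u_0^n-\u_0\|_{\H}.$$

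Finally I would remove the localization via
$$\mathbb{P}\Big\{\sup_{0\le t\le T}\|\w^{\e}_n(t)\|_{\H}^2>\delta\Big\}\le\mathbb{P}\Big\{\sup_{0\le t\le T\wedge\tau^{\e}_{n,M}}\|\w^{\e}_n(t)\|_{\H}^2>\delta\Big\}+\mathbb{P}\big\{\tau^{\e}_{n,M}<T\big\},$$
combined with the elementary bound $\e\log(a+b)\le\e\log 2+\max(\e\log a,\e\log b)$. For fixed $M$ the first term tends to $-\infty$ as $n\to\infty$ by the previous display, because $\|\u_0^n-\u_0\|_{\H}\to 0$; the second is dominated, uniformly in $n$ and $\e$, by $\sup_{0<\e\le1}\e\log[\mathbb{P}\{(|\u^{\e}_n|_{\V_p,\wi\L^{r}}^{\H}(T))^2>M\}+\mathbb{P}\{(|\u^{\e}|_{\V_p,\wi\L^{r}}^{\H}(T))^2>M\}]$, which tends to $-\infty$ as $M\to\infty$ by Lemma \ref{lem3.7} and \eqref{320}. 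Sending first $n\to\infty$ and then $M\to\infty$ establishes the claim. The main obstacle is the uniform-in-$\e$ estimate of the local-monotonicity defect in the second step; once the exponent $\frac{2p-d-2}{2p-d}\ge 0$ (equivalently $p\ge\frac d2+1$) is used to absorb the $1/\e$ blow-up of $\int_0^T\|\u^{\e}\|_{\V_p}^p\d s$, the remaining steps parallel Lemmas \ref{lem3.7} and \ref{lem3.9}.
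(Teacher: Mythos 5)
Your argument is correct and follows essentially the same route as the paper: localization via a stopping time controlling the energy functional of Lemma \ref{lem3.7}, the local monotonicity estimate with weight $\|\u^{\e}\|_{\V_p}^{2p/(2p-d)}$ in the subcritical case and the global monotonicity bounds when $p\geq 2$, $r\geq 4$, the martingale inequality \eqref{3.15} with $q\sim 1/\e$, and removal of the localization through Lemma \ref{lem3.7} and \eqref{320}. The only cosmetic difference is that the paper absorbs the local-monotonicity defect by applying It\^o's formula directly to the exponentially weighted process $e^{-k\e\int_0^{t\wedge\tau_{\e,M}}\|\u^{\e}(s)\|_{\V_p}^{2p/(2p-d)}\d s}\|\u^{\e}_n(t)-\u^{\e}(t)\|_{\H}^2$, whereas you run a pathwise Gronwall argument after the plain It\^o formula (and your stopping time additionally controls $\u^{\e}_n$, which is harmless but not needed since the weight involves only $\u^{\e}$); the two devices are equivalent, and both rest on the same observation that $p\geq\frac{d}{2}+1$ makes $\e\int_0^{\tau}\|\u^{\e}(s)\|_{\V_p}^{2p/(2p-d)}\d s$ bounded uniformly in $\e\in(0,1]$ on the stopped interval.
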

	\begin{proof}
		For $M>0$, let us define a sequence of stopping times as 
		\begin{align}
			\tau_{\e,M}:=\inf_{t\geq 0}\left\{t:\e\int_0^t\big(\|\u^{\e}(s)\|_{\V}^2+\|\u^{\e}(s)\|_{\V_p}^p\big)\d s>M\ \text{ or }\ \|\u^{\e}(t)\|_{\H}^2>M\right\}.
		\end{align}	
		It is clear that 
		\begin{align}\label{330}
			&	\mathbb{P}\left\{\sup_{0\leq t\leq T}\|\u^{\e}_n(t)-\u^{\e}(t)\|_{\H}^2>\delta,\ \left(|\u^{\e}|_{\V_p,\wi\L^{r}}^{\H}(T)\right)^{2}\leq M\right\}\nonumber\\&\leq 	\mathbb{P}\left\{\sup_{0\leq t\leq T}\|\u^{\e}_n(t)-\u^{\e}(t)\|_{\H}^2>\delta,\ \tau_{\e,M}\geq T\right\}\nonumber\\&\leq 	\mathbb{P}\left\{\sup_{0\leq t\leq T}\|\u^{\e}_n(t)-\u^{\e}(t)\|_{\H}^2>\delta\right\}. 
		\end{align}
		\vskip 0.2 cm
		\noindent \textbf{Case 1}: $p\geq\frac{d}{2}+1$ and $r\geq 2$.
		Let $k$ be a positive constant. For $p\geq\frac{d}{2}+1$ and $r\geq 2$,  applying the infinite-dimensional It\^o's formula to  $ e^{-k\e\int_0^{\cdot}\|\u^{\e}(s)\|_{\V_p}^{\frac{2p}{2p-d}}\d s}\|\u^{\e}_n(\cdot)-\u^{\e}(\cdot)\|_{\H}^2$, we find for all $t\in[0,T]$,	$\mathbb{P}$-a.s., 
		\begin{align}\label{3p30}
			&	e^{-k\e\int_0^{t\wedge\tau_{\e,M}}\|\u^{\e}(s)\|_{\V_p}^{\frac{2p}{2p-d}}\d s}\|\u^{\e}_n(t\wedge\tau_{\e,M})-\u^{\e}(t\wedge\tau_{\e,M})\|_{\H}^2\nonumber\\&\quad-2\mu\e\int_0^{t\wedge\tau_{\e,M}}e^{-k\e\int_0^{s\wedge\tau_{\e,M}}\|\u^{\e}(r)\|_{\V_p}^{\frac{2p}{2p-d}}\d r}
			\langle\mathcal{A}(\u^{\e}_n(s)-\u^{\e}(s)),\u^{\e}_n(s)-\u^{\e}(s)\rangle\d s\nonumber\\&\quad+2\beta\e\int_0^{t\wedge\tau_{\e,M}}e^{-k\e\int_0^{s\wedge\tau_{\e,M}}\|\u^{\e}(r)\|_{\V_p}^{\frac{2p}{2p-d}}\d r}\langle\mathcal{C}(\u^{\e}_n(s))-\mathcal{C}(\u^{\e}(s)),\u^{\e}_n(s)-\u^{\e}(s)\rangle\d s\nonumber\\&=\|\u_0^n-\u_0\|_{\H}^2-k\e\int_0^{t\wedge\tau_{\e,M}}e^{-k\e\int_0^{s\wedge\tau_{\e,M}}\|\u^{\e}(r)\|_{\V_p}^{\frac{2p}{2p-d}}\d r}\|\u^{\e}(s)\|_{\V_p}^{\frac{2p}{2p-d}}\|\u^{\e}_n(s)-\u^{\e}(s)\|_{\H}^2\d s\nonumber\\&\quad-2\e\int_0^{t\wedge\tau_{\e,M}}e^{-k\e\int_0^{s\wedge\tau_{\e,M}}\|\u^{\e}(r)\|_{\V_p}^{\frac{2p}{2p-d}}\d r}\langle\B(\u^{\e}_n(s))-\B(\u^{\e}(s)),\u^{\e}_n(s)-\u^{\e}(s)\rangle\d s\nonumber\\&\quad+\e\sum_{j=1}^{\infty}\int_0^{t\wedge\tau_{\e,M}}e^{-k\e\int_0^{s\wedge\tau_{\e,M}}\|\u^{\e}(r)\|_{\V_p}^{\frac{2p}{2p-d}}\d r}\|\boldsymbol{\sigma}_j(\e s,\u^{\e}_n(s))-\boldsymbol{\sigma}_j(\e s,\u^{\e}(s))\|_{\H}^2\d s\nonumber\\&\quad+2\sqrt{\e}\sum_{j=1}^{\infty}\int_0^{t\wedge\tau_{\e,M}}e^{-k\e\int_0^{s\wedge\tau_{\e,M}}\|\u^{\e}(r)\|_{\V_p}^{\frac{2p}{2p-d}}\d r}\nonumber\\&\qquad\qquad\times\big((\boldsymbol{\sigma}_j(\e s,\u^{\e}_n(s))-\boldsymbol{\sigma}_j(\e s,\u^{\e}(s)))\d\boldsymbol{W}_j(s),\u^{\e}_n(s)-\u^{\e}(s)\big).
		\end{align}
		From \eqref{ae}, we infer that 
		\begin{align}\label{3.31}
			-&2\mu\e\langle\mathcal{A}(\u^{\e}_n-\u^{\e}),\u^{\e}_n-\u^{\e}\rangle\geq 2\mu\e\|\nabla(\u^{\e}_n-\u^{\e})\|_{\H}^2,
		\end{align}
		and from \eqref{2.23}, we obtain 
		\begin{align}\label{3.32}
			2&\beta\e\langle\mathcal{C}(\u^{\e}_n)-\mathcal{C}(\u^{\e}),\u^{\e}_n-\u^{\e}\rangle\geq 0. 
		\end{align}
		For $p\geq\frac{d}{2}+1$ and $r\geq 2$, we estimate $-2\e\langle\B(\u^{\e}_n)-\B(\u^{\e}),\u^{\e}_n-\u^{\e}\rangle$ in a similar way as in \eqref{225} as 
		\begin{align}\label{3.33}
			-&2\e\langle\B(\u^{\e}_n)-\B(\u^{\e}),\u^{\e}_n-\u^{\e}\rangle\nonumber\\&\leq \mu\e\|\nabla(\u^{\e}_n-\u^{\e})\|_{\H}^2+\e\wi\eta\|\u^{\e}\|_{\V_p}^{\frac{2p}{2p-d}}\|\u^{\e}_n-\u^{\e}\|_{\H}^2,
		\end{align}
		where $\wi\eta$ is defined in \eqref{eta1}. Let us choose $k>\wi\eta$. Combining \eqref{3.31}-\eqref{3.33} and then substituting it in \eqref{3p30}, we find for all $t\in[0,T]$,	$\mathbb{P}$-a.s., 
		\begin{align}\label{3p34}
			&	e^{-k\e\int_0^{t\wedge\tau_{\e,M}}\|\u^{\e}(s)\|_{\V_p}^{\frac{2p}{2p-d}}\d s}\|\u^{\e}_n(t\wedge\tau_{\e,M})-\u^{\e}(t\wedge\tau_{\e,M})\|_{\H}^2\nonumber\\&\leq\|\u_0^n-\u_0\|_{\H}^2+\e L\int_0^{t\wedge\tau_{\e,M}}e^{-k\e\int_0^{s\wedge\tau_{\e,M}}\|\u^{\e}(r)\|_{\V_p}^{\frac{2p}{2p-d}}\d r}\|\u^{\e}_n(s)-\u^{\e}(s)\|_{\H}^2\d s\nonumber\\&\quad+2\sqrt{\e}\bigg|\sum_{j=1}^{\infty}\int_0^{t\wedge\tau_{\e,M}}e^{-k\e\int_0^{s\wedge\tau_{\e,M}}\|\u^{\e}(r)\|_{\V_p}^{\frac{2p}{2p-d}}\d r}\nonumber\\&\qquad\qquad\times\big((\boldsymbol{\sigma}_j(\e s,\u^{\e}_n(s))-\boldsymbol{\sigma}_j(\e s,\u^{\e}(s)))\d\boldsymbol{W}_j(s),\u^{\e}_n(s)-\u^{\e}(s)\big)\bigg|,
		\end{align}
		where we have used Hypothesis \ref{hyp} (H.2) also. 
		Using \eqref{3.15}, for $q\geq 2$, one can deduce that 
		\begin{align}\label{3p35}
			&\left\{\E\left[\sup_{0\leq t\leq T\wedge\tau_{\e,M}}\left(e^{-k\e\int_0^{t\wedge\tau_{\e,M}}\|\u^{\e}(s)\|_{\V_p}^{\frac{2p}{2p-d}}\d s}\|\u^{\e}_n(t)-\u^{\e}(t)\|_{\H}^2\right)\right]^q\right\}^{\frac{2}{q}}\nonumber\\&\leq 2\|\u_0^n-\u_0\|_{\H}^4+2\e^2L^2\int_0^T\left\{\E\left[\sup_{0\leq s\leq t\wedge\tau_{\e,M}}\left(e^{-k\e\int_0^{s}\|\u^{\e}(r)\|_{\V_p}^{\frac{2p}{2p-d}}\d r}\|\u^{\e}_n(s)-\u^{\e}(s)\|_{\H}^2\right)\right]^q\right\}^{\frac{2}{q}}\d t\nonumber\\&\quad+8C\e qL^2\int_0^T\left\{\E\left[\sup_{0\leq s\leq t\wedge\tau_{\e,M}}\left(e^{-2k\e\int_0^{s}\|\u^{\e}(r)\|_{\V_p}^{\frac{2p}{2p-d}}\d r}\|\u^{\e}_n(s)-\u^{\e}(s)\|_{\H}^4\right)\right]^{\frac{q}{2}}\right\}^{\frac{2}{q}}\d t\nonumber\\&\leq  2\|\u_0^n-\u_0\|_{\H}^4+2\e^2L^2\int_0^T\left\{\E\left[\sup_{0\leq s\leq t\wedge\tau_{\e,M}}\left(e^{-k\e\int_0^{s}\|\u^{\e}(r)\|_{\V_p}^{\frac{2p}{2p-d}}\d r}\|\u^{\e}_n(s)-\u^{\e}(s)\|_{\H}^2\right)\right]^q\right\}^{\frac{2}{q}}\d t\nonumber\\&\quad+8C\e qL^2\int_0^T\left\{\E\left[\sup_{0\leq s\leq t\wedge\tau_{\e,M}}\left(e^{-k\e\int_0^{s}\|\u^{\e}(r)\|_{\V_p}^{\frac{2p}{2p-d}}\d r}\|\u^{\e}_n(s)-\u^{\e}(s)\|_{\H}^2\right)\right]^q\right\}^{\frac{2}{q}}\d t.
		\end{align}
		An application of Gronwall's inequality in \eqref{3p35} gives 
		\begin{align}
			&\left\{\E\left[\sup_{0\leq t\leq T\wedge\tau_{\e,M}}\left(e^{-k\e\int_0^{t\wedge\tau_{\e,M}}\|\u^{\e}(s)\|_{\V_p}^{\frac{2p}{2p-d}}\d s}\|\u^{\e}_n(t)-\u^{\e}(t)\|_{\H}^2\right)^q\right]\right\}^{\frac{2}{q}}\nonumber\\&\leq 2\|\u_0^n-\u_0\|_{\H}^4e^{2\e^2L^2T+8C\e qL^2T}.
		\end{align}
		Therefore, we easily have 
		\begin{align}
			&\left\{\E\left[\sup_{0\leq t\leq T\wedge\tau_{\e,M}}\left(\|\u^{\e}_n(t)-\u^{\e}(t)\|_{\H}^2\right)^q\right]\right\}^{\frac{2}{q}}\nonumber\\&\leq \left\{\E\left[\sup_{0\leq t\leq T\wedge\tau_{\e,M}}\left(e^{-k\e\int_0^{t}\|\u^{\e}(s)\|_{\V_p}^{\frac{2p}{2p-d}}\d s}\|\u^{\e}_n(t)-\u^{\e}(t)\|_{\H}^2\right)^qe^{qk\e\int_0^{t\wedge\tau_{\e,M}}\|\u^{\e}(s)\|_{\V_p}^{\frac{2p}{2p-d}}\d s}\right]\right\}^{\frac{2}{q}}\nonumber\\&\leq e^{2k\e M^{\frac{2}{2p-d}}T^{\frac{2p-d-2}{2p-d}}}\left\{\E\left[\sup_{0\leq t\leq T\wedge\tau_{\e,M}}\left(e^{-k\e\int_0^{t}\|\u^{\e}(s)\|_{\V_p}^{\frac{2p}{2p-d}}\d s}\|\u^{\e}_n(t)-\u^{\e}(t)\|_{\H}^2\right)^q\right]\right\}^{\frac{2}{q}}\nonumber\\&\leq 2e^{2k M^{\frac{2}{2p-d}}T^{\frac{2p-d-2}{2p-d}}}\|\u_0^n-\u_0\|_{\H}^4e^{2\e^2L^2T+8C\e qL^2T},
		\end{align}
		for $p\geq\frac{d}{2}+1$. For any fixed $M$, taking $q=\frac{2}{\e}$, we get 
		\begin{align}\label{3.38}
			&	\sup_{0<\e\leq 1}\e\log\mathbb{P}\left\{\sup_{0\leq t\leq T\wedge\tau_{\e,M}}\|\u^{\e}_n(t)-\u^{\e}(t)\|_{\H}^2>\delta\right\}\nonumber\\&\leq \sup_{0<\e\leq 1}\e\log\frac{\E\left[\sup\limits_{0\leq t\leq T\wedge\tau_{\e,M}}\left(\|\u^{\e}_n(t)-\u^{\e}(t)\|_{\H}^{2q}\right)\right]}{\delta^{q}}\nonumber\\&\leq 2k M^{\frac{2}{2p-d}}T^{\frac{2p-d-2}{2p-d}}+2L^2T+16CL^2T-2\log\delta+\log(2\|\u_0^n-\u_0\|_{\H}^4)\nonumber\\&\to-\infty,\ \text{ as }\ n\to\infty. 
		\end{align}
		Using Lemma \ref{lem3.7}, we infer that, for any $R>0$, there exists a constant $M>0$ such that for any $\e\in(0,1]$, the following inequality holds: 
		\begin{align}\label{340}
			\mathbb{P}\left\{\left(|\u^{\e}|_{\V_p,\wi\L^{r}}^{\H}(T)\right)^{2}>M\right\}\leq e^{-\frac{R}{\e}}. 
		\end{align}
		For such an $M$, combining \eqref{330} and \eqref{3.38}, there exists a positive integer $N$, such that for any $n\geq N$, 
		\begin{align}\label{341}
			\sup_{0<\e\leq 1}\e\log\mathbb{P}\left\{\sup_{0\leq t\leq T\wedge\tau_{\e,M}}\|\u^{\e}_n(t)-\u^{\e}(t)\|_{\H}^2>\delta,\ \left(|\u^{\e}|_{\V_p,\wi\L^{r}}^{\H}(T)\right)^{2}>M\right\}\leq -R. 
		\end{align}
		Combining \eqref{340} and \eqref{341}, one can obtain the existence of a positive integer $N$, such that for any $n\geq N$, $\e\in(0,1]$, 
		\begin{align}
			\sup_{0<\e\leq 1}\e\log\mathbb{P}\left\{\sup_{0\leq t\leq T}\|\u^{\e}_n(t)-\u^{\e}(t)\|_{\H}^2>\delta\right\}\leq -2R.
		\end{align}
		The arbitrariness of $R$ completes the proof for $p\geq\frac{d}{2}+1$ and $r\geq 2$. 
		
		\vskip 0.2 cm
		\noindent \textbf{Case 2}: $p\geq 2$ and $r\geq 4$.  For the case $p\geq 2$ and $r>4$, we first apply It\^o's formula to the process $\|\u^{\e}_n(\cdot)-\u^{\e}(\cdot)\|_{\H}^2$ to find for all $t\in[0,T]$,	$\mathbb{P}$-a.s., 
		\begin{align}\label{3p43}
			&\|\u^{\e}_n(t)-\u^{\e}(t)\|_{\H}^2-2\mu\e\int_0^{t}\langle\mathcal{A}(\u^{\e}_n(s))-\mathcal{A}(\u^{\e}(s)),\u^{\e}_n(s)-\u^{\e}(s)\rangle\d s\nonumber\\&\quad+2\beta\e\int_0^{t}\langle\mathcal{C}(\u^{\e}_n(s))-\mathcal{C}(\u^{\e}(s)),\u^{\e}_n(s)-\u^{\e}(s)\rangle\d s\nonumber\\&=\|\u_0^n-\u_0\|_{\H}^2-2\e\int_0^{t}\langle\B(\u^{\e}_n(s))-\B(\u^{\e}(s)),\u^{\e}_n(s)-\u^{\e}(s)\rangle\d s\nonumber\\&\quad+\e\sum_{k=1}^{\infty}\int_0^{t}\|\boldsymbol{\sigma}_k(\e s,\u^{\e}_n(s))-\boldsymbol{\sigma}_k(\e s,\u^{\e}(s))\|_{\H}^2\d s\nonumber\\&\quad+ 2\sqrt{\e}\sum_{k=1}^{\infty}\int_0^{t}\big((\boldsymbol{\sigma}_k(\e s,\u^{\e}_n(s))-\boldsymbol{\sigma}_k(\e s,\u^{\e}(s)))\d\boldsymbol{W}_k(s),\u^{\e}_n(s)-\u^{\e}(s)\big).
		\end{align}
		One can estimate $2\beta\e\langle\mathcal{C}(\u^{\e}_n)-\mathcal{C}(\u^{\e}),\u^{\e}_n-\u^{\e}\rangle$ as (see \eqref{2.23})
		\begin{align}\label{3p44}
			2&\beta\e\langle\mathcal{C}(\u^{\e}_n)-\mathcal{C}(\u^{\e}),\u^{\e}_n-\u^{\e}\rangle\nonumber\\&\geq \beta\e\||\u^{\e}_n|^{\frac{r-2}{2}}(\u^{\e}_n-\u^{\e})\|_{\H}^2+\beta\e\||\u^{\e}|^{\frac{r-2}{2}}(\u^{\e}_n-\u^{\e})\|_{\H}^2.
		\end{align}
		A calculation similar to \eqref{2.30} gives 
		\begin{align}\label{3p45}
			-&2\e\langle\B(\u^{\e}_n)-\B(\u^{\e}),\u^{\e}_n-\u^{\e}\rangle\nonumber\\&\leq \mu\e\|\nabla(\u^{\e}_n-\u^{\e})\|_{\H}^2+\beta\e\||\u^{\e}|^{\frac{r-2}{2}}(\u^{\e}_n-\u^{\e})\|_{\H}^2+\e\eta\|\u^{\e}_n-\u^{\e}\|_{\H}^2, 
		\end{align}
		where $\eta$ is defined in \eqref{eta}. Combining \eqref{3.31}, \eqref{3p44}-\eqref{3p45} and then substituting it in \eqref{3p43}, we find for all $t\in[0,T]$,	$\mathbb{P}$-a.s., 
		\begin{align}\label{3p46}
			&	\|\u^{\e}_n(t)-\u^{\e}(t)\|_{\H}^2\nonumber\\&\leq \|\u_0^n-\u_0\|_{\H}^2+\e(\eta+L)\int_0^{t}\|\u^{\e}_n(s)-\u^{\e}(s)\|_{\H}^2\d s\nonumber\\&\quad+ 2\sqrt{\e}\left|\sum_{k=1}^{\infty}\int_0^{t}\big((\boldsymbol{\sigma}_k(\e s,\u^{\e}_n(s))-\boldsymbol{\sigma}_k(\e s,\u^{\e}(s)))\d\boldsymbol{W}_k(s),\u^{\e}_n(s)-\u^{\e}(s)\big)\right|,
		\end{align}
		where we have used Hypothesis \ref{hyp} (H.2)  also. A calculation similar to \eqref{3p35} yields 
		\begin{align}
			&\left\{\E\left[\sup_{0\leq t\leq T}\|\u^{\e}_n(t)-\u^{\e}(t)\|_{\H}^{2q}\right]\right\}^{\frac{2}{q}}\nonumber\\&\leq 2\|\u_0^n-\u_0\|_{\H}^4+2\e^2(\eta+L)^2\int_0^T\left\{\E\left[\sup_{0\leq s\leq t}\|\u^{\e}_n(s)-\u^{\e}(s)\|_{\H}^{2q}\right]\right\}^{\frac{2}{q}}\d t\nonumber\\&\qquad+8C\e qL^2\int_0^T\left\{\E\left[\sup_{0\leq s\leq t}\|\u^{\e}_n(s)-\u^{\e}(s)\|_{\H}^{2q}\right]\right\}^{\frac{2}{q}}\d t. 
		\end{align}
		An application of Gronwall's inequality gives 
		\begin{align}
			&\left\{\E\left[\sup_{0\leq t\leq T}\|\u^{\e}_n(t)-\u^{\e}(t)\|_{\H}^{2q}\right]\right\}^{\frac{2}{q}}\leq 2\|\u_0^n-\u_0\|_{\H}^4 e^{2\e^2(\eta+L)^2+8C\e qL^2}.
		\end{align}
		Arguing similarly as in \eqref{3.27}, one can complete the proof. 
		
		For $r=4$ and $2\beta\mu\geq 1$, one can estimate $-2\e\langle\B(\u^{\e}_n)-\B(\u^{\e}),\u^{\e}_n-\u^{\e}\rangle$ as (see \eqref{232})
		\begin{align}\label{3p48}
			-&2\e\langle\B(\u^{\e}_n)-\B(\u^{\e}),\u^{\e}_n-\u^{\e}\rangle\leq 2\mu\e\|\nabla(\u^{\e}_n-\u^{\e})\|_{\H}^2+\frac{\e}{2\mu}\||\u^{\e}|(\u^{\e}_n-\u^{\e})\|_{\H}^2,
		\end{align}
		and the proof can be completed in a similar way as in the case of $p\geq 2$ and $r>4$.
	\end{proof}
	\begin{remark}\label{rem5.9}
		For the case $p\geq 2$ and $r\geq 4$,	it can be easily seen that the stopping time arguments are not needed in the proof of Lemma \ref{lem3.10}. 
	\end{remark}

	\begin{lemma}\label{lem3.11}
		For any $\delta>0$, and every positive integer $n$,
		\begin{align}
			\lim_{\e\to 0}\e\log\mathbb{P}\left\{\sup_{0\leq t\leq T}\|\u^{\e}_n(t)-\v^{\e}_n(t)\|_{\H}^2>\delta\right\}=-\infty. 
		\end{align}
	\end{lemma}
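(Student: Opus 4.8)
The plan is to study the difference $\w_n^{\e}:=\u_n^{\e}-\v_n^{\e}$, which starts from the \emph{same} datum, so $\w_n^{\e}(0)=\mathbf{0}$, and satisfies
\begin{align*}
\w_n^{\e}(t)&=-\e\int_0^t\big[\mu\mathcal{A}(\u_n^{\e}(s))+\B(\u_n^{\e}(s))+\beta\mathcal{C}(\u_n^{\e}(s))-\f(\e s)\big]\d s\\
&\quad+\sqrt{\e}\sum_{k=1}^{\infty}\int_0^t\big[\boldsymbol{\sigma}_k(\e s,\u_n^{\e}(s))-\boldsymbol{\sigma}_k(\e s,\v_n^{\e}(s))\big]\d\boldsymbol{W}_k(s).
\end{align*}
The crucial structural difference from Lemma \ref{lem3.10} is that $\v_n^{\e}$ solves the \emph{driftless} equation \eqref{3.7}, so the entire drift of $\u_n^{\e}$ survives in $\w_n^{\e}$; the regularity estimates of Lemma \ref{lem3.8} for $\v_n^{\e}$ are exactly what will tame it. Since this drift carries a factor $\e$ and $\w_n^{\e}(0)=\mathbf{0}$, the deterministic part of $\w_n^{\e}$ is small, and the only true randomness enters through the $\sqrt{\e}$-scaled, Lipschitz-controlled noise difference.

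First I would apply the infinite dimensional It\^o formula to $\|\w_n^{\e}(\cdot)\|_{\H}^2$ and split the surviving drift into a monotone difference evaluated between $\u_n^{\e}$ and $\v_n^{\e}$ plus a remainder evaluated only at $\v_n^{\e}$. For the monotone parts I would move $\mathcal{A}(\u_n^{\e})-\mathcal{A}(\v_n^{\e})$ and $\mathcal{C}(\u_n^{\e})-\mathcal{C}(\v_n^{\e})$ to the left and bound them below through \eqref{233} and \eqref{2.23} (cf. \eqref{3.31}--\eqref{3.32}); here, \emph{unlike} in Lemma \ref{lem3.10}, I must keep the full higher-order dissipation $\frac{\mu\e}{2^{p-1}}\|\w_n^{\e}\|_{\V_p}^p+\frac{\beta\e}{2^{r-2}}\|\w_n^{\e}\|_{\wi\L^r}^r$ coming from the strict monotonicity of $\mathcal{A}$ and $\mathcal{C}$. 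The bilinear difference equals $-\langle\B(\w_n^{\e},\w_n^{\e}),\v_n^{\e}\rangle$ and is absorbed, at the cost of a defect $\e\eta'\|\w_n^{\e}\|_{\H}^2$, by the $\V_2$-dissipation: via \eqref{225} in the regime $p\geq\frac d2+1,\ r\geq2$ (so $\eta'=\wi\eta\|\v_n^{\e}\|_{\V_p}^{\frac{2p}{2p-d}}$), and via \eqref{2.30}/\eqref{232} in the regime $p\geq2,\ r\geq4$ (so $\eta'=\eta$ from \eqref{eta}, or $\eta'=0$ when $r=4$, $2\beta\mu\geq1$). The remaining terms $\langle\mu\mathcal{A}(\v_n^{\e})+\B(\v_n^{\e})+\beta\mathcal{C}(\v_n^{\e})-\f(\e s),\w_n^{\e}\rangle$ I would estimate by H\"older's and Young's inequalities using $\|\mathcal{A}(\v_n^{\e})\|_{\V_p'}\leq C(1+\|\v_n^{\e}\|_{\V_p}^{p-1})$, $\|\B(\v_n^{\e})\|_{\V_p'}\leq C\|\v_n^{\e}\|_{\V_p}^2$ and $\|\mathcal{C}(\v_n^{\e})\|_{\wi\L^{\frac{r}{r-1}}}=\|\v_n^{\e}\|_{\wi\L^r}^{r-1}$, absorbing the $\V_p$- and $\wi\L^r$-norms of $\w_n^{\e}$ into the retained dissipation and leaving a source bounded by $C\e\big(1+\|\v_n^{\e}\|_{\V_p}^{\max\{p,\frac{2p}{p-1}\}}+\|\v_n^{\e}\|_{\wi\L^r}^r+\|\f(\e s)\|_{\V_p'}^{\frac{p}{p-1}}\big)$; the It\^o correction is dominated by $\e L\|\w_n^{\e}\|_{\H}^2$ via Hypothesis \ref{hyp}(H.2).

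Next I would introduce the stopping time $\tau_{\e,M}$, the first instant at which any of $\|\v_n^{\e}\|_{\V_2}^2,\|\v_n^{\e}\|_{\V_p}^p,\|\v_n^{\e}\|_{\wi\L^r}^r$ exceeds $M$, so that on $[0,\tau_{\e,M}]$ the defect coefficient $\eta'$ and the $\v_n^{\e}$-part of the source are bounded by a constant $C_M$; integrating, the source is then at most $\widehat a_{\e}:=C_M\e+\int_0^{T\e}\|\f(s)\|_{\V_p'}^{\frac{p}{p-1}}\d s$, which tends to $0$ as $\e\to0$ (the forcing term by absolute continuity of the integral, since $\f\in\mathrm{L}^{\frac{p}{p-1}}(0,T;\V_p')$). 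Raising to the power $q\geq2$, applying the martingale inequality \eqref{3.15}, Minkowski's and Gronwall's inequalities exactly as in \eqref{3p35}, and using $\w_n^{\e}(0)=\mathbf{0}$, I expect
\begin{align*}
\left\{\E\Big[\sup_{0\leq t\leq T\wedge\tau_{\e,M}}\|\w_n^{\e}(t)\|_{\H}^{2q}\Big]\right\}^{\frac{2}{q}}\leq C_M\,\widehat a_{\e}^{\,2}\,e^{C_M(\e^2+q\e)T}.
\end{align*}
Choosing $q=\frac2\e$ keeps the exponent bounded and the prefactor of order $\widehat a_{\e}^{\,2}\to0$, whence Markov's inequality gives $\e\log\mathbb{P}\{\sup_{[0,T\wedge\tau_{\e,M}]}\|\w_n^{\e}\|_{\H}^2>\delta\}\leq 2\log(\sqrt{C_M}\,\widehat a_{\e}/\delta)+C_MT\to-\infty$ as $\e\to0$, for each fixed $M$.

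Finally I would combine this with Lemma \ref{lem3.8}: for any $R>0$, estimates \eqref{3p22}--\eqref{3p24} furnish an $M$ with $\sup_{0<\e\leq1}\e\log\mathbb{P}\{\tau_{\e,M}<T\}\leq -R$; splitting $\{\sup_{[0,T]}\|\w_n^{\e}\|_{\H}^2>\delta\}$ according to $\{\tau_{\e,M}\geq T\}$ and $\{\tau_{\e,M}<T\}$, letting $\e\to0$ and then $R\to\infty$ yields the claim, just as in the passage \eqref{340}--\eqref{341}. I expect the main obstacle to be precisely the \emph{non-monotone} remainder $\mu\mathcal{A}(\v_n^{\e})+\B(\v_n^{\e})+\beta\mathcal{C}(\v_n^{\e})$ generated by comparing against the driftless process: controlling it forces one to retain the higher-order $\V_p$- and $\wi\L^r$-dissipation from the strict monotonicity of $\mathcal{A}$ and $\mathcal{C}$ (rather than only the $\V_2$-dissipation used in Lemma \ref{lem3.10}) in order to absorb $\w_n^{\e}$, while simultaneously controlling the $\V_2$-, $\V_p$- and $\wi\L^r$-norms of $\v_n^{\e}$ super-exponentially through Lemma \ref{lem3.8}; the bilinear term moreover necessitates the two-regime split of Lemmas \ref{lem2.2} and \ref{lem2.5}.
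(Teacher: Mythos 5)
Your proposal is correct and follows essentially the same route as the paper's proof: It\^o's formula for $\|\u^{\e}_n-\v^{\e}_n\|_{\H}^2$, retention of the strict-monotonicity dissipation of $\mathcal{A}$ and $\mathcal{C}$ to absorb the non-monotone remainder $\mu\mathcal{A}(\v^{\e}_n)+\B+\beta\mathcal{C}(\v^{\e}_n)-\f$, the two-regime treatment of the bilinear term, stopping times controlled through Lemma \ref{lem3.8}, and the $q=\frac{2}{\e}$ moment/Markov argument. The only cosmetic deviation is that you put the Gagliardo--Nirenberg weight of the bilinear defect on $\v^{\e}_n$ (via $-\langle\B(\w,\w),\v^{\e}_n\rangle$ with $\w=\u^{\e}_n-\v^{\e}_n$) rather than on $\u^{\e}_n$ as in \eqref{337}, which lets you dispense with the additional stopping time $\tau^{n,1}_{\e,M}$ on the energy of $\u^{\e}_n$ that the paper also employs.
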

	\begin{proof}
		For $M>0$, let us define the following stopping times: 
		\begin{align*}
			\tau^{n,1}_{\e,M}&:=\inf_{t\geq 0}\left\{t:\e\int_0^t[\|\u^{\e}_n(s)\|_{\V}^2+\|\u^{\e}_n(s)\|_{\V_p}^p]\d s>M\ \text{ or }\ \|\u^{\e}_n(t)\|_{\H}^2>M\right\},\\
			\tau^{n,2}_{\e,M}&:=\inf_{t\geq 0}\left\{t:\|\nabla\v^{\e}_n(t)\|_{\H}^2>M \ \text{ or }\ \|\nabla\v^{\e}_n(t)\|_{\wi\L^p}^p>M\ \text{ or }\ \|\v^{\e}(t)\|_{\wi\L^r}^r>M\right\},
		\end{align*}
		and $\tau^{n}_{\e,M}=\tau^{n,1}_{\e,M}\wedge  \tau^{n,2}_{\e,M}$. 	Applying It\^o's formula to $\|\u^{\e}_n(\cdot)-\v^{\e}_n(\cdot)\|_{\H}^2$, we get for all $t\in[0,T]$,	$\mathbb{P}$-a.s., 
		\begin{align}\label{3.30}
			&	\|\u^{\e}_n(t\wedge\tau^n_{\e,M})-\v^{\e}_n(t\wedge\tau^n_{\e,M})\|_{\H}^2-	2\mu\e\int_0^{t\wedge\tau^n_{\e,M}}\langle\mathcal{A}(\u^{\e}_n(s))-\mathcal{A}(\v^{\e}_n(s)),\u^{\e}_n(s)-\v^{\e}_n(s)\rangle\d s\nonumber\\&\quad+2\beta\e\int_0^{t\wedge\tau^n_{\e,M}}\langle\mathcal{C}(\u^{\e}_n(s))-\mathcal{C}(\v^{\e}_n(s)),\u^{\e}_n(s)-\v^{\e}_n(s)\rangle\d s\nonumber\\&=2\mu\e\int_0^{t\wedge\tau^n_{\e,M}}\langle\mathcal{A}(\v^{\e}_n(s)),\u^{\e}_n(s)-\v^{\e}_n(s)\rangle\d s-2\e\int_0^{t\wedge\tau^n_{\e,M}}\langle\B(\u^{\e}_n(s)),\u^{\e}_n(s)-\v^{\e}_n(s)\rangle\d s\nonumber\\&\quad-2\beta\e\int_0^{t\wedge\tau^n_{\e,M}}\langle\mathcal{C}(\v^{\e}_n(s)),\u^{\e}_n(s)-\v^{\e}_n(s)\rangle\d s+2\e\int_0^{t\wedge\tau^n_{\e,M}}\langle\f(\e s),\u^{\e}_n(s)-\v^{\e}_n(s)\rangle\d s\nonumber\\&\quad+\e\sum_{k=1}^{\infty}\int_0^{t\wedge\tau^n_{\e,M}}\|\boldsymbol{\sigma}_k(\e s,\u^{\e}_n(s))-\boldsymbol{\sigma}_k(\e s,\v^{\e}_n(s))\|_{\H}^2\d s\nonumber\\&\quad+2\sqrt{\e}\sum_{k=1}^{\infty}\int_0^{t\wedge\tau^n_{\e,M}}\big((\boldsymbol{\sigma}_k(\e s,\u^{\e}_n(s))-\boldsymbol{\sigma}_k(\e s,\v^{\e}_n(s)))\d\boldsymbol{W}_k(s),\u^{\e}_n(s)-\v^{\e}_n(s)\big).
		\end{align}
		Note that $$\langle\B(\u),\u-\v\rangle=\langle \B(\u-\v,\u),\u-\v\rangle-\langle\B(\v,\u-\v),\v\rangle,$$ for all $\u,\v\in\V$.  Using \eqref{2.23} and \eqref{2.31} in \eqref{3.30}, we obtain for all $t\in[0,T]$,	$\mathbb{P}$-a.s., 
		\begin{align}\label{331}
			&	\|\u^{\e}_n(t\wedge\tau^n_{\e,M})-\v^{\e}_n(t\wedge\tau^n_{\e,M})\|_{\H}^2+\mu\e\int_0^{t\wedge\tau^n_{\e,M}}\|\u^{\e}_n(s)-\v^{\e}_n(s)\|_{\V}^2\d s\nonumber\\&\quad+\frac{\mu\e}{2}\int_0^{t\wedge\tau^n_{\e,M}}\||\nabla\u^{\e}_n(s)|^{\frac{p-2}{2}}\nabla(\u^{\e}_n(s)-\v^{\e}_n(s))\|_{\H}^2\d s\nonumber\\&\quad+\frac{\mu\e}{2}\int_0^{t\wedge\tau^n_{\e,M}}\||\nabla\v^{\e}_n(s)|^{\frac{p-2}{2}}\nabla(\u^{\e}_n(s)-\v^{\e}_n(s))\|_{\H}^2\d s\nonumber\\&\quad+\beta\e\int_0^{t\wedge\tau^n_{\e,M}}\||\u^{\e}_n(s)|^{\frac{r-2}{2}}(\u^{\e}_n(s)-\v^{\e}_n(s))\|_{\H}^2\d s\nonumber\\&\quad+\beta\e\int_0^{t\wedge\tau^n_{\e,M}}\||\v^{\e}_n(s)|^{\frac{r-2}{2}}(\u^{\e}_n(s)-\v^{\e}_n(s))\|_{\H}^2\d s\nonumber\\&\leq -2\mu\e\int_0^{t\wedge\tau^n_{\e,M}}\left(((1+|\nabla\v^{\e}_n(s)|^2)^{\frac{p-2}{2}})\nabla\v^{\e}_n(s),\nabla(\u^{\e}_n(s)-\v^{\e}_n(s))\right)\d s\nonumber\\&\quad-2\e\int_0^{t\wedge\tau^n_{\e,M}}\langle\B(\u^{\e}_n(s)-\v^{\e}_n(s),\u^{\e}_n(s)),\u^{\e}_n(s)-\v^{\e}_n(s)\rangle\d s\nonumber\\&\quad+2\e\int_0^{t\wedge\tau^n_{\e,M}}\langle\B(\v^{\e}_n(s),\u^{\e}_n(s)-\v^{\e}_n(s)),\v^{\e}_n(s)\rangle\d s\nonumber\\&\quad-2\beta\e\int_0^{t\wedge\tau^n_{\e,M}}\langle\mathcal{C}(\v^{\e}_n(s)),\u^{\e}_n(s)-\v^{\e}_n(s)\rangle\d s+2\e\int_0^{t\wedge\tau^n_{\e,M}}\langle\f(\e s),\u^{\e}_n(s)-\v^{\e}_n(s)\rangle\d s\nonumber\\&\quad+\e\sum_{k=1}^{\infty}\int_0^{t\wedge\tau^n_{\e,M}}\|\boldsymbol{\sigma}_k(\e s,\u^{\e}_n(s))-\boldsymbol{\sigma}_k(\e s,\v^{\e}_n(s))\|_{\H}^2\d s\nonumber\\&\quad+2\sqrt{\e}\sum_{k=1}^{\infty}\int_0^{t\wedge\tau^n_{\e,M}}\big((\boldsymbol{\sigma}_k(\e s,\u^{\e}_n(s))-\boldsymbol{\sigma}_k(\e s,\v^{\e}_n(s)))\d\boldsymbol{W}_k(s),\u^{\e}_n(s)-\v^{\e}_n(s)\big).
		\end{align}
		We estimate $-2\mu\e\left(((1+|\nabla\v^{\e}_n|^2)^{\frac{p-2}{2}})\nabla\v^{\e}_n,\nabla(\u^{\e}_n-\v^{\e}_n)\right)$ as 
		\begin{align}\label{332}
			&-2\mu\e\left(((1+|\nabla\v^{\e}_n|^2)^{\frac{p-2}{2}})\nabla\v^{\e}_n,\nabla(\u^{\e}_n-\v^{\e}_n)\right)\nonumber\\&\leq 2\mu\e\|((1+|\nabla\v^{\e}_n|^2)^{\frac{p-2}{4}})\nabla\v^{\e}_n\|_{\H}\|((1+|\nabla\v^{\e}_n|^2)^{\frac{p-2}{4}})\nabla(\u^{\e}_n-\v^{\e}_n)\|_{\H}\nonumber\\&\leq \mu\e 2^{\frac{p}{2}}\left(\|\nabla\v^{\e}_n\|_{\H}^2+\|\nabla\v^{\e}_n\|_{\wi\L^p}^p\right)^{\frac{1}{2}}\left(\|\nabla(\u^{\e}_n-\v^{\e}_n)\|_{\H}^2+\||\nabla\v^{\e}_n|^{\frac{p-2}{2}}\nabla(\u^{\e}_n-\v^{\e}_n)\|_{\H}^2\right)^{\frac{1}{2}}\nonumber\\&\leq\frac{\mu\e}{4}\|\u^{\e}_n-\v^{\e}_n\|_{\V}^2+\frac{\mu\e}{4}\||\nabla\v^{\e}_n|^{\frac{p-2}{2}}\nabla(\u^{\e}_n-\v^{\e}_n)\|_{\H}^2+2^{p+1}\mu\e\left(\|\nabla\v^{\e}_n\|_{\H}^2+\|\nabla\v^{\e}_n\|_{\wi\L^p}^p\right).
		\end{align}
		Similarly, we estimate $-2\beta\e\langle\mathcal{C}(\v^{\e}_n),\u^{\e}_n-\v^{\e}_n\rangle$ and $2\e\langle\f,\u^{\e}_n-\v^{\e}_n\rangle$ as 
		\begin{align}
			-2\beta\e\langle\mathcal{C}(\v^{\e}_n),\u^{\e}_n-\v^{\e}_n\rangle&\leq 2\beta\e\|\v^{\e}_n\|_{\wi\L^r}^{\frac{r}{2}}\||\v^{\e}_n|^{\frac{r-2}{2}}(\u^{\e}_n-\v^{\e}_n)\|_{\H}\nonumber\\&\leq \beta\e\||\v^{\e}_n|^{\frac{r-2}{2}}(\u^{\e}_n-\v^{\e}_n)\|_{\H}^2+\beta\e\|\v^{\e}_n\|_{\wi\L^r}^{r},\\ 2\e\langle\f,\u^{\e}_n-\v^{\e}_n\rangle&\leq 2\e\|\f\|_{\V_p'}\|\u^{\e}_n-\v^{\e}_n\|_{\V_p}\leq\frac{\mu\e}{2^{p-1}}\|\u^{\e}_n-\v^{\e}_n\|_{\V_p}^p+\wi C_{p,\mu}\e\|\f\|_{\V_p'}^{p'},\label{334}
		\end{align}
		where $\wi C_{p,\mu}=2^{\frac{2p-1}{p-1}}\frac{(p-1)}{p}\left(\frac{1}{\mu p}\right)^{\frac{1}{p-1}}$.
		\vskip 0.2 cm
		\noindent\textbf{Case 1:} $p\geq \frac{d}{2}+1$ and $r\geq 2$. 
		For the case $p\geq \frac{d}{2}+1$ and $r\geq 2$, an estimate similar to \eqref{225} gives 
		\begin{align}\label{337}
			-&2\e\langle\B(\u^{\e}_n-\v^{\e}_n,\u^{\e}_n),\u^{\e}_n-\v^{\e}_n\rangle \leq \frac{\mu\e}{4}\|\u^{\e}_n-\v^{\e}_n\|_{\V}^2+\wi\eta_1\e\|\u^{\e}_n\|_{\V_p}^{\frac{2p}{2p-d}}\|\u^{\e}_n-\v^{\e}_n\|_{\H}^2,
		\end{align}
		where $\wi\eta_1=(2C)^{\frac{2p}{2p-d}}\left(\frac{2p-d}{2p}\right)\left(\frac{d}{\mu p}\right)^{\frac{d}{2p-d}}$. By Sobolev's embedding, we know that $\H^1(\mathcal{O})\subset\L^4(\mathcal{O})$, for $\mathcal{O}\subset\R^d$ with $2\leq d\leq 4$. 
		Using H\"older's, Young's and Sobolev's inequalities, we estimate $2\e\langle\B(\v^{\e}_n,\u^{\e}_n-\v^{\e}_n),\v^{\e}_n\rangle$ as
		\begin{align}\label{338}
			2\e	\langle\B(\v^{\e}_n,\u^{\e}_n-\v^{\e}_n),\v^{\e}_n\rangle&\leq 2\e\|\u^{\e}_n-\v^{\e}_n\|_{\V}\|\v^{\e}_n\|_{\wi\L^4}^2\nonumber\\&\leq\frac{\mu\e}{4}\|\u^{\e}_n-\v^{\e}_n\|_{\V}^2+\frac{C\e}{\mu}\|\v_n^{\e}\|_{\V}^4.
		\end{align} 
		Combining \eqref{332}-\eqref{338} and then substituting it in \eqref{331}, we find for all $t\in[0,T]$,	$\mathbb{P}$-a.s., 
		\begin{align}
			&	\|\u^{\e}_n(t\wedge\tau^n_{\e,M})-\v^{\e}_n(t\wedge\tau^n_{\e,M})\|_{\H}^2\nonumber\\&\leq 2^{p+1}\mu\e\int_0^{t\wedge\tau^n_{\e,M}}\left(\|\nabla\v^{\e}_n(s)\|_{\H}^2+\|\nabla\v^{\e}_n(s)\|_{\wi\L^p}^p\right)\d s+\beta\e\int_0^{t\wedge\tau^n_{\e,M}}\|\v_n^{\e}(s)\|_{\wi\L^r}^r\d s\nonumber\\&\quad+\wi C_{p,\mu}\int_0^{\e t}\|\f(s)\|_{\V_p'}^{p'}\d s+\frac{C\e}{\mu}\int_0^{t\wedge\tau^n_{\e,M}}\|\v^{\e}_n(s)\|_{\V}^4\d s\nonumber\\&\quad+\wi\eta_1\e\int_0^{t\wedge\tau^n_{\e,M}}\|\u^{\e}_n(s)\|_{\V_p}^{\frac{2p}{2p-d}}\|\u^{\e}_n(s)-\v^{\e}_n(s)\|_{\H}^2\d s+\e L\int_0^{t\wedge\tau^n_{\e,M}}\|\u^{\e}_n(s)-\v^{\e}_n(s)\|_{\H}^2\d s\nonumber\\&\quad+2\sqrt{\e}\left|\sum_{k=1}^{\infty}\int_0^{t\wedge\tau^n_{\e,M}}\big((\boldsymbol{\sigma}_k(\e s,\u^{\e}_n(s))-\boldsymbol{\sigma}_k(\e s,\v^{\e}_n(s)))\d\boldsymbol{W}_k(s),\u^{\e}_n(s)-\v^{\e}_n(s)\big)\right|,
		\end{align}
		where we have used Hypothesis \ref{hyp} (H.2) also. Applying Gronwall's inequality, we deduce that for all $t\in[0,T]$,	$\mathbb{P}$-a.s., 
		\begin{align}
			&	\|\u^{\e}_n(t\wedge\tau^n_{\e,M})-\v^{\e}_n(t\wedge\tau^n_{\e,M})\|_{\H}^2\nonumber\\&\leq\Bigg\{2^{p+1}\mu\e\int_0^{t\wedge\tau^n_{\e,M}}\left(\|\nabla\v^{\e}_n(s)\|_{\H}^2+\|\nabla\v^{\e}_n(s)\|_{\wi\L^p}^p\right)\d s+\beta\e\int_0^{t\wedge\tau^n_{\e,M}}\|\v_n^{\e}(s)\|_{\wi\L^r}^r\d s\nonumber\\&\qquad+\wi C_{p,\mu}\int_0^{\e t}\|\f(s)\|_{\V_p'}^{p'}\d s+\frac{C\e}{\mu}\int_0^{t\wedge\tau^n_{\e,M}}\|\v^{\e}_n(s)\|_{\V}^4\d s\nonumber\\&\qquad+2\sqrt{\e}\left|\sum_{k=1}^{\infty}\int_0^{t\wedge\tau^n_{\e,M}}\big((\boldsymbol{\sigma}_k(\e s,\u^{\e}_n(s))-\boldsymbol{\sigma}_k(\e s,\v^{\e}_n(s)))\d\boldsymbol{W}_k(s),\u^{\e}_n(s)-\v^{\e}_n(s)\big)\right|\Bigg\}\nonumber\\&\quad\times\exp\left\{\e L+\wi\eta_1\e t^{\frac{2p-d-2}{2p-d}}\left(\int_0^{t\wedge\tau^n_{\e,M}}\|\u^{\e}_n(s)\|_{\V_p}^{p}\d s\right)^{\frac{2}{2p-d}}\right\}\nonumber\\&\leq e^{\e L+\wi\eta_1\e^{\frac{2p-d-2}{2p-d}} t^{\frac{2(2p-d-1)}{2p-d}}M^{\frac{2}{2p-d}}}\Bigg\{2^{p+1}\mu\e M t+\beta\e M t+\wi C_{p,\mu}\int_0^{\e t}\|\f(s)\|_{\V_p'}^{p'}\d s+\frac{C\e M^2t}{\mu}\nonumber\\&\quad +2\sqrt{\e}\left|\sum_{k=1}^{\infty}\int_0^{t\wedge\tau^n_{\e,M}}\big((\boldsymbol{\sigma}_k(\e s,\u^{\e}_n(s))-\boldsymbol{\sigma}_k(\e s,\v^{\e}_n(s)))\d\boldsymbol{W}_k(s),\u^{\e}_n(s)-\v^{\e}_n(s)\big)\right|\Bigg\},
		\end{align}
		for $p\geq 1+\frac{d}{2}$, where we have used the definition of stopping time also. Using the similar techniques as in Lemma \ref{lem3.9}, we obtain 
		\begin{align}\label{3.41}
			&	\left[\E\left(\sup_{0\leq s\leq T\wedge\tau^n_{\e,M}}\|\u^{\e}_n(s)-\v^{\e}_n(s)\|_{\H}^{2q}\right)\right]^{\frac{2}{q}}\nonumber\\&\leq e^{2\e L+2\wi\eta_1\e^{\frac{2(2p-d-2)}{2p-d}} T^{\frac{4(2p-d-1)}{2p-d}}M^{\frac{4}{2p-d}}}\Bigg\{2^{2p+3}\mu^2\e^2M^2T^2+2\beta^2\e^2M^2T^2\nonumber\\&\qquad+2\wi C_{p,\mu}^2\left(\int_0^{\e T}\|\f(s)\|_{\V_p'}^{p'}\d s\right)^{2}+\frac{C^2\e^2M^4T^2}{\mu^2}\nonumber\\&\qquad+16 Cq\e L^2\int_0^T\left[\E\left(\sup_{0\leq r\leq s\wedge\tau^n_{\e,M}}\|\u^{\e}_n(r)-\v^{\e}_n(r)\|_{\H}^{2q}\right)\right]^{\frac{2}{q}}\d s\Bigg\}.
		\end{align}
		Let us define $C_{\e,M,L,T}=e^{2\e L+2\wi\eta_1\e^{\frac{2(2p-d-2)}{2p-d}} T^{\frac{4(2p-d-1)}{2p-d}}M^{\frac{4}{2p-d}}}16Cq\e L^2$. Then an application of Gronwall's inequality in \eqref{3.41} yields 
		\begin{align}\label{3.42}
			&	\left[\E\left(\sup_{0\leq s\leq T\wedge\tau^n_{\e,M}}\|\u^{\e}_n(s)-\v^{\e}_n(s)\|_{\H}^{2q}\right)\right]^{\frac{2}{q}}\nonumber\\&\leq e^{2\e L+2\wi\eta_1\e^{\frac{2(2p-d-2)}{2p-d}} T^{\frac{4(2p-d-1)}{2p-d}}M^{\frac{4}{2p-d}}}\Bigg\{2^{2p+3}\mu^2\e^2M^2T^2+2\beta^2\e^2M^2T^2\nonumber\\&\qquad+2\wi C_{p,\mu}^2\left(\int_0^{\e T}\|\f(s)\|_{\V_p'}^{p'}\d s\right)^{2}+\frac{C^2\e^2M^4T^2}{\mu^2}\Bigg\}e^{C_{\e,M,L,T}}.
		\end{align}
		From Lemmas \ref{lem3.7} and \ref{lem3.8}, we infer that, for any $R>0$, there exists an $M>0$ such that 
		\begin{align}\label{3.43}
			\sup_{0<\e\leq 1}\e\log\mathbb{P}\left\{\left(|\u^{\e}_n|_{\V_p,\wi\L^{r}}^{\H}(T)\right)^2>M\right\}&\leq -R,\\ 
			\sup_{0<\e\leq 1}\e\log\mathbb{P}\left\{\sup_{0\leq t\leq T}\|\v_n^{\e}(t)\|_{\V}^2>M\right\}&\leq -R,
			\\ 
			\sup_{0<\e\leq 1}\e\log\mathbb{P}\left\{\sup_{0\leq t\leq T}\|\v_n^{\e}(t)\|_{\V_p}^p>M\right\}&\leq -R,\\ 
			\sup_{0<\e\leq 1}\e\log\mathbb{P}\left\{\sup_{0\leq t\leq T}\|\v_n^{\e}(t)\|_{\wi\L^{r}}^{r}>M\right\}&\leq -R.\label{3.46}
		\end{align}
		For such a constant $M$, taking $q=\frac{2}{\e}$ in \eqref{3.42}, we find 
		\begin{align}
			\e&\log\mathbb{P}\bigg\{\sup_{0\leq t\leq T}\|\u^{\e}_n(t)-\v^{\e}_n(t)\|_{\H}^2>\delta,\ \left(|\u^{\e}_n|_{\V_p,\wi\L^{r}}^{\H}(T)\right)^2 \leq M, \ \sup_{0\leq t\leq T}\|\v_n^{\e}(t)\|_{\V}^2\leq M, \nonumber\\ &\qquad\qquad  \sup_{0\leq t\leq T}\|\v_n^{\e}(t)\|_{\V_p}^p\leq M,\  \sup_{0\leq t\leq T}\|\v_n^{\e}(t)\|_{\wi\L^{r}}^{r}\leq M \bigg\}\nonumber\\&\leq \e\log\mathbb{P}\bigg\{\sup_{0\leq t\leq T\wedge\tau^n_{\e,\M}}\|\u^{\e}_n(t)-\v^{\e}_n(t)\|_{\H}^2>\delta\bigg\}\nonumber\\&\leq \log \left\{\left[\E\left(\sup_{0\leq s\leq t\wedge\tau^n_{\e,M}}\|\u^{\e}_n(s)-\v^{\e}_n(s)\|_{\H}^{2q}\right)\right]^{\frac{2}{q}}\right\}-\log\delta^2\nonumber\\&\leq 2\e L+2\wi\eta_1\e^{\frac{2(2p-d-2)}{2p-d}} T^{\frac{4(2p-d-1)}{2p-d}}M^{\frac{4}{2p-d}}+C_{\e,M,L}-\log\delta^2\nonumber\\&\quad+\log\left\{2^{2p+3}\mu^2\e^2M^2T^2+2\beta^2\e^2M^2T^2+2\wi C_{p,\mu}^2\left(\int_0^{\e T}\|\f(s)\|_{\V_p'}^{p'}\d s\right)^{2}+\frac{C^2\e^2M^4T^2}{\mu^2}\right\}\nonumber\\&\to-\infty,\ \text{ as }\ \e\to 0. 
		\end{align}
		Therefore, there exists an $\e_0$ such that for any $\e$ satisfying $0<\e\leq\e_0$, 
		\begin{align}\label{3.48}
			\e&\log\mathbb{P}\bigg\{\sup_{0\leq t\leq T}\|\u^{\e}_n(t)-\v^{\e}_n(t)\|_{\H}^2>\delta,\ \left(|\u^{\e}_n|_{\V_p,\wi\L^{r}}^{\H}(T)\right)^2 \leq M, \ \sup_{0\leq t\leq T}\|\v_n^{\e}(t)\|_{\V}^2\leq M, \nonumber\\ &\qquad\qquad  \sup_{0\leq t\leq T}\|\v_n^{\e}(t)\|_{\V_p}^p\leq M,\  \sup_{0\leq t\leq T}\|\v_n^{\e}(t)\|_{\wi\L^{r}}^{r}\leq M \bigg\}\leq -R. 
		\end{align}
		Combining \eqref{3.43}-\eqref{3.46} and \eqref{3.48}, we infer that there exists a constant $\e_0$ such that for any $\e$ satisfying $0<\e\leq\e_0$, 
		\begin{align}\label{3.49}
			\e&\log\mathbb{P}\bigg\{\sup_{0\leq t\leq T}\|\u^{\e}_n(t)-\v^{\e}_n(t)\|_{\H}^2>\delta\bigg\}\leq -5 R. 
		\end{align}
		The arbitrariness of $R$ completes the proof. 
		
		\vskip 0.2 cm
		\noindent\textbf{Case 2:} $p\geq 2$ and $r\geq 4$. 
		For $p\geq 2$ and $r>4$, an estimate similar to \eqref{2.30} yields 
		\begin{align}
			-&2\e\langle\B(\u^{\e}_n-\v^{\e}_n,\u^{\e}_n),\u^{\e}_n-\v^{\e}_n\rangle \nonumber\\&\leq \frac{\mu\e}{4}\|\u^{\e}_n-\v^{\e}_n\|_{\V}^2+\beta\e\||\u^{\e}_n|^{\frac{r-2}{2}}(\u^{\e}_n-\v^{\e}_n)\|_{\H}^2+\e\eta_1\|\u^{\e}_n-\v^{\e}_n\|_{\H}^2,
		\end{align}
		where $\eta_1=\frac{(r-4)}{(r-2)}\left(\frac{8}{\beta\mu (r-2)}\right)^{\frac{2}{r-4}}$. For $p\geq 2,$ $\beta\mu>1$  and $r=4$, we estimate $-2\e\langle\B(\u^{\e}_n-\v^{\e}_n,\u^{\e}_n),\u^{\e}_n-\v^{\e}_n\rangle $ using H\"older's and Young's inequalities as 
		\begin{align}
			-&2\e\langle\B(\u^{\e}_n-\v^{\e}_n,\u^{\e}_n),\u^{\e}_n-\v^{\e}_n\rangle \leq {\theta\mu\e}\|\u^{\e}_n-\v^{\e}_n\|_{\V}^2+\frac{\e}{\theta\mu}\||\u^{\e}_n|(\u^{\e}_n-\v^{\e}_n)\|_{\H}^2,
		\end{align}
		for some $0<\theta<1$. The rest of the proof can be completed in a similar fashion as in the case of $p\geq \frac{d}{2}+1$ and $r\geq 2$.
	\end{proof}

	\begin{proof}[Proof of \eqref{3p8}]
		Making use of Lemmas \ref{lem3.9} and \ref{lem3.10}, we have for any $R > 0$, there exists an $N_0$ such that 
		\begin{align*}
			\mathbb{P}\left\{\sup_{0\leq t\leq T}\|\u^{\e}(t)-\u^{\e}_{N_0}(t)\|_{\H}^2>\delta \right\}\leq e^{-R/\e},\ \text{ for any } \ \e\in(0,1],
		\end{align*}
		and 
		\begin{align*}
			\mathbb{P}\left\{\sup_{0\leq t\leq T}\|\v^{\e}(t)-\v^{\e}_{N_0}(t)\|_{\H}^2>\delta \right\}\leq e^{-R/\e},\ \text{ for any } \ \e\in(0,1].
		\end{align*}
		From Lemma \ref{lem3.11}, we infer that for such $N_0$, there exists na $\e_0$ such that 
		\begin{align*}
			\mathbb{P}\left\{\sup_{0\leq t\leq T}\|\u^{\e}_{N_0}(t)-\v^{\e}_{N_0}(t)\|_{\H}^2>\delta \right\}\leq e^{-R/\e},\ \text{ for any } \ \e\in(0,\e_0].
		\end{align*}
		Therefore, for any $\e\in(0,\e_0]$, we deduce that 
		\begin{align*}
			\mathbb{P}\left\{\sup_{0\leq t\leq T}\|\u^{\e}(t)-\v^{\e}(t)\|_{\H}^2>\delta \right\}\leq 3e^{-R/\e}.
		\end{align*}
		The arbitrariness of $R$ implies that 
		\begin{align*}
			\lim_{\e\to 0}\e\log 	\mathbb{P}\left\{\sup_{0\leq t\leq T}\|\u^{\e}(t)-\v^{\e}(t)\|_{\H}^2>\delta \right\}=-\infty,
		\end{align*}
		which completes the proof. 
	\end{proof}

	\medskip\noindent
	{\bf Acknowledgments:}  A. Kumar  would like to thank the Austrian Science Foundation (FWF) (Stochastic Turing Patters, Project Number: P 24681-N) for financial assistance. M. T. Mohan would like to thank the Department of Science and Technology (DST) Science \& Engineering Research Board (SERB), India for a MATRICS grant (MTR/2021/000066).
	
	\medskip\noindent	{\bf  Declarations:} 
	
	\noindent 	{\bf  Ethical Approval:}   Not applicable 
	
	\noindent  {\bf   Competing interests: } The author declare no competing interests. 
	
	\noindent  {\bf  Author contributions statement: } All authors contributed equally. 
	
	\noindent 	{\bf   Funding: } Stochastic Turing Patters (P 24681-N), Austrian Science Found (FWF) (Ankit Kumar).\\
	MATRICS grant, DST-SERB, India (MTR/2021/000066). (Manil T. Mohan).

	\noindent 	{\bf   Availability of data and materials: } Not applicable.

\end{document}